\g@addto@macro\normalsize{%
}
\newcommand{\beq}[1]{\begin{equation} \label{#1}}
\newcommand{\eeq}{\end{equation}}
\newcommand{\bed}{\begin{displaymath}}
\newcommand{\eed}{\end{displaymath}}
\newcommand{\bea}{\bed\begin{array}{rl}}
\newcommand{\eea}{\end{array}\eed}
\newcommand{\barray}{\begin{array}{ll}}
\newcommand{\earray}{\end{array}}
\def\({\left(}
\def\){\right)}
\newtheorem{theorem}{Theorem}
\newtheorem{lemma}{Lemma}
\newtheorem{assumption}{Assumption}
\newtheorem{prop}{Proposition}
\newtheorem{defi}{Definition}
\newtheorem{remark}{Remark}
\newtheorem{example}{Example}
\title{Convergence Rate of LQG Mean Field Games with Common Noise}
\author[]{Jiamin Jian }
\author[]{Qingshuo Song \thanks{qsong@wpi.edu}}
\author[]{Jiaxuan Ye }
\affil[]{Department of Mathematical Sciences, Worcester Polytechnic Institute}
\date{}
\begin{document}

\maketitle


\begin{abstract}
This paper focuses on exploring the convergence properties of a generic player's trajectory and empirical measures in an $N$-player Linear-Quadratic-Gaussian Nash game, where Brownian motion serves as the common noise. The study establishes three distinct convergence rates concerning the representative player and empirical measure.
To investigate the convergence, the methodology relies on a specific decomposition of the equilibrium path in the $N$-player game and utilizes the associated Mean Field Game framework.
\end{abstract}

\section{Introduction}

Mean Field Game (MFG) theory was introduced by Lasry and Lions in their seminal paper (\cite{LJ07}), and by Huang, Caines, and Malhame (\cite{HMC06, HCM07_1, HCM07_2, HCM07_3}). It aims to provide a framework for studying the asymptotic behavior of $N$-player differential games being invariant under the reshuffling of the players' indices. For a comprehensive overview of recent advancements and relevant applications of MFG theory, it is recommended to refer to the two-volume book by Carmona and Delarue (\cite{CD18I, CD18II}) published in 2018 and the references provided therein.

Mean Field Games (MFG) have become widely accepted as an approximation for $N$-player games, particularly when the number of players, $N$, is large enough. A fundamental question that arises in this context concerns the convergence rate of this approximation.
Convergence can be analyzed from different perspectives, such as convergence in value, the trajectory followed by the representative player, or the behavior of the mean field term. Each of these perspectives offers valuable insights into the behavior and characteristics of the MFG approximation. Furthermore, they raise a variety of intriguing questions within this context.

To be more concrete, we examine the behavior of the triangular array 
$\hat X_t^{(N)} = (\hat X_{it}^{(N)}: 1\le i \le N)$ as $N\to \infty$, 
where $\hat X_{it}^{(N)}$ represents the equilibrium state of the $i$-th player at time $t$ in the $N$-player game, defined within the probability space $\left(\Omega^{(N)}, \mathcal F^{(N)}, \mathbb F^{(N)}, \mathbb P^{(N)} \right)$. Additionally, we denote $\hat X_t$ as the equilibrium path at time $t$ derived from the associated MFG, defined in the probability space $(\Omega, \mathcal F, \mathbb F, \mathbb P)$.

Considering the identical but not independent distribution $\mathcal L(\hat X_{it}^{(N)})$, the first question pertains to the convergence of $\hat X_{1t}^{(N)}$, which represents the generic path. It can be framed as follows:

\begin{itemize}
\item [(Q1)]
The $\mathbb W_p$-convergence rate of the representative equilibrium path, 
$$\mathbb W_p \left(\mathcal L \left(\hat X_{1t}^{(N)} \right), \mathcal L \left(\hat X_t \right) \right) 
	= O \left(N^{-?} \right).$$
\end{itemize}
Here, $\mathbb W_p$ denotes the $p$-Wasserstein metric.

The existing literature extensively explores the convergence rate in this context. For (Q1), Theorem 2.4.9 of the monograph \cite{CDLL19} establishes a convergence rate of $O(N^{-1/2})$ using the $\mathbb W_1$ metric. More recently, \cite{JT23} addresses (Q1) by introducing displacement monotonicity and controlled common noise, and Theorem 2.23 applies the maximum principle of forward-backward propagation of chaos to achieve the same convergence rate. Within the LQG framework, \cite{JLSY22} also provides a convergence rate of $1/2$ for the representative player.

The second question pertains to the convergence of the mean-field term, which is equivalent to the convergence of the empirical measure $\rho (\hat X_t^{(N)}) = \frac{1}{N} \sum_{i=1}^{N} \delta_{\hat{X}_{it}^{(N)}}$ of $N$ players. Given the Brownian motion, denoted as $\tilde W_t$, to be the common noise, the problem lies in determining the rate of convergence of the empirical measures to the MFG equilibrium measure
$$ \hat m_t = \mathcal L \left( \left. \hat{X}_t \right \vert {\mathcal F}_t^{\tilde W} \right), \quad \forall t \in (0, T].$$
Thus, the second question can be stated as follows:
\begin{itemize}
\item [(Q2)] 
The $\mathbb W_p$-convergence rate of empirical measures in $L^p$ sense,
$$
\left(\mathbb E \left[ 
\mathbb W_p^p 
\left(\rho \left(\hat X_t^{(N)} \right), \mathcal L \left(\left. \hat X_t \right \vert {\mathcal F}_t^{\tilde W} \right) \right) 
\right] \right)^{\frac{1}{p}} = O \left(N^{-?} \right).
$$
\end{itemize}

 As for (Q2), Theorem 3.1 of \cite{DLR20} provides an answer, stating that the empirical measures 
 exhibit a convergence rate of $O(N^{-1/(2p)})$ in the $\mathbb W_p$ distance for $p \in [1,2]$. In \cite{DLR20}, they also explore a related question that is both similar and more intriguing, which concerns the uniform $\mathbb{W}_p$-convergence rate:
\begin{itemize}
\item [(Q3)] 
The $t$-uniform $\mathbb W_p$-convergence rate of empirical measures in $L^p$ sense,
$$
\left(\mathbb E \left[ 
\sup_{t\in [0, T]}
\mathbb W_p^p 
\left(\rho \left(\hat X_t^{(N)} \right), \mathcal L \left(\left. \hat X_t \right \vert {\mathcal F}_t^{\tilde W} \right) \right) 
\right] \right)^{\frac{1}{p}} = O \left(N^{-?} \right).
$$
\end{itemize}
The answer provided by Theorem 3.1 in \cite{DLR20} reveals that the uniform convergence rate, as formulated in (Q3), is considerably slower compared to the convergence rate mentioned in (Q2). Specifically, the convergence rate for (Q3) is $O\left(N^{-1/(d+8)} \right)$ when $p=2$, where $d$ represents the dimension of the state space.

In our paper, we specifically focus on a class of one-dimensional Linear-Quadratic-Gaussian (LQG) Mean Field Nash Games with Brownian motion as the common noise. It is important to note that the assumptions made in the aforementioned papers except \cite{JLSY22} only account for linear growth in the state and control elements for the running cost, thus excluding the consideration of LQG.
It is also noted that differences between \cite{JLSY22} and the current paper lie in various aspects: (1) The problem setting in our paper considers Brownian motion as the common noise, whereas \cite{JLSY22} employs a Markov chain. This discrepancy leads to significant differences in the subsequent analysis;  (2) The work in \cite{JLSY22} does not address the questions posed in (Q2) and (Q3).

Our main contribution is the establishment of the convergence rate of all three questions in the above in LQG framework.
Firstly, the paper establishes that the convergence rate of the $p$-Wasserstein metric for the distribution of the representative player is $O(N^{-1/2})$ for $p\in [1,2]$.
Secondly, it demonstrates that the convergence rate of the $p$-Wasserstein metric for the empirical measure in the $L^p$ sense is $O(N^{-1/(2p)})$ for $p\in [1,2]$.
Lastly, the paper shows that the convergence rate of the uniform $p$-Wasserstein metric for the empirical measure in the $L^p$ sense is $O(N^{-1/(2p)})$ for $p\in(1, 2]$, and $O(N^{-1/2} \ln (N))$ for $p = 1$.

It is worth noting that the convergence rates obtained for (Q1) and (Q2) in the LQG framework align with the results found in existing literature, albeit under different conditions. Additionally, it is revealed that the uniform convergence rate of (Q3) may be slower than that of (Q2), which is consistent with the observations made by \cite{DLR20} from a similar perspective.
Interestingly, when considering the specific case where $p=2$ and $d=1$, the uniform convergence rate of (Q3) is established as $O(N^{-1/9})$ according to \cite{DLR20}, while it is determined to be $O(N^{-1/4})$ within our framework that incorporates the LQG structure.

Regarding (Q2), if the states $(\hat X_{it}^{(N)}: 1\le i \le N)$ were independent, the convergence rate could be determined as $1/(2p)$ based on Theorem 1 of \cite{FG15} and Theorem 5.8 of \cite{CD18I}, which provide convergence rates for empirical measures of independent and identically distributed sequences. However, in the mean-field game, the states $\hat X_{it}^{(N)}$ are not independent of each other, despite having identical distributions.  
The correlation is introduced mainly by two factors: One is  the system coupling arising from the mean-field term and the other is the common noise.
Consequently, determining the convergence rate requires understanding the contributions of these two factors to the correlation among players. 

In our proof, we rely on a specific decomposition (refer to Lemma \ref{l:triangle} and the proof of the main theorem) of the underlying states. This decomposition reveals that the states can be expressed as a sum of a weakly correlated triangular array and a common noise. By analyzing the behavior of these components, we can address the correlation and establish the convergence rate.

Additionally, it is worth mentioning that a similar technique of dimension reduction in $N$-player LQG games have been previously utilized in \cite{HY21} and related papers to establish decentralized Nash equilibria and the convergence rate in terms of value functions.

The remainder of the paper is organized as follows: Section \ref{s:section2} outlines the problem setup and presents the main result. The proof of the main result, which relies on two propositions, is provided in Section \ref{s:proof}. 
We establish the proof for these two propositions in Section \ref{s:section3} and Section \ref{s:section4}. Some lemmas are given in the Appendix.


\section{Problem setup and main results}
\label{s:section2}

\subsection{The formulation of equilibrium in Mean Field Game}
\label{s:mfgsetting}

In this section, we present the formulation of the Mean Field Game in the sample space $\Omega$.

Let $T > 0$ be a given time horizon. We assume that $W = \{W_t\}_{t \geq 0}$ is a standard Brownian motion constructed on the probability space $(\bar{\Omega}, \bar{\mathcal{F}} = \bar{\mathcal{F}}_T, \bar{\mathbb{P}}, \bar{\mathbb{F}} = \{\bar{\mathcal{F}}_t\}_{t \geq 0})$. Similarly, the process $\tilde{W} = \{\tilde{W}_t\}_{t \geq 0}$ is a standard Brownian motion constructed on the probability space $(\tilde{\Omega}, \tilde{\mathcal{F}} = \tilde{\mathcal{F}}_T, \tilde{\mathbb{P}}, \tilde{\mathbb{F}} = \{\tilde{\mathcal{F}}_t\}_{t \geq 0})$. We define the product structure as follows:
$$\Omega = \bar{\Omega} \times \tilde{\Omega}, \quad \mathcal F, \quad \mathbb F = \{\mathcal F_t\}_{t \geq 0}, \quad \mathbb P,$$
where $(\mathcal F, \mathbb P)$ is the completion of $( \bar{\mathcal F} \otimes \tilde{\mathcal F}, \bar{\mathbb P} \otimes \tilde{\mathbb P} )$ and $\mathbb F$ is the complete and right continuous augmentation of $\{\bar{\mathcal F}_t \otimes \tilde{\mathcal F}_t \}_{t \geq 0}$.

Note that, $W$ and $\tilde W$ are two Brownian motions from separate sample spaces $\bar \Omega$ and $\tilde \Omega$, they are independent of each other in their product space $\Omega$.
 In our manuscript, $W$ is called individual or idiosyncratic noise, and $\tilde{W}$ is called common noise, 
 see their different roles in the problem formulation later defined via fixed point condition \eqref{eq:mhat}. 
To proceed, we denote by $L^p := L^p(\Omega, \mathbb P)$ the set of random variables $X$ on $(\Omega, \mathcal F, \mathbb P)$ with finite $p$-th moment with norm $\|X\|_p = (\mathbb E \left[|X|^p \right])^{1/p}$ 
and by $L_{\mathbb F}^p:=L_{\mathbb F}^p(\Omega \times [0, T])$ the space of all $\mathbb R$ valued $\mathbb F$-progressively measurable random processes $\alpha$ such that
$$\mathbb E \left[ \int_0^T |\alpha_t|^p dt \right] <\infty.$$

Let $\mathcal P_p(\mathbb R)$ denote the Wasserstein space of probability measures $\mu$ on $\mathbb R$ satisfying 
$\int_{\mathbb R} x^p d \mu(x) < \infty$
endowed with $p$-Wasserstein metric  $\mathbb W_p(\cdot, \cdot)$ defined by
$$\mathbb W_p(\mu, \nu) = \inf_{\pi \in \Pi(\mu, \nu)} \left( \int_{\mathbb R \times \mathbb R} |x - y|^p d\pi(x, y) \right)^{\frac{1}{p}},$$
where $\Pi(\mu, \nu)$ is the collection of all probability measures on $\mathbb R \times \mathbb R$ with its marginals agreeing with $\mu$ and $\nu$.

Let $X_0\in L^2$ be a random variable that is independent with $W$ and $\tilde{W}$. For any control $\alpha \in L^2_{\mathbb F}$, consider the state $X = \{X_t\}_{t \geq 0}$ of the generic player is governed by a stochastic differential equation (SDE)
\begin{equation}
\label{eq:X}
	d X_t = \alpha_t dt + d W_t + d \tilde{W}_t
\end{equation}
with the initial value $X_0$, where the underlying process $X: [0, T] \times \Omega \mapsto \mathbb R$. Given a random measure flow $m: (0, T]\times \Omega \mapsto \mathcal P_2(\mathbb R)$, the generic player wants to minimize the expected accumulated cost on $[0, T]$: 
\begin{equation}
	\label{eq:total cost}
	\begin{array}{ll}
		J (x, \alpha)  = \displaystyle
		\mathbb E \left[ \left.  \int_0^T \left( \frac{1}{2} \alpha_s^2 + F(X_s, m_{s}) \right) \, ds \right \rvert X_0 = x \right]
	\end{array}
\end{equation} 
with some given cost function $F: \mathbb R\times \mathcal P_2(\mathbb R) \mapsto \mathbb R$. 
	
The objective of the control problem for the generic player is to find its optimal control 
$\hat \alpha \in \mathcal A := L^4_{\mathbb F}$  to minimize the total cost, i.e.,
\begin{equation}
	\label{V_m}
	V[m](x) = J[m](x, \hat \alpha) \le J[m](x, \alpha), \quad \forall \alpha \in \mathcal A.
\end{equation}
Associated to the optimal control  $\hat \alpha$, we denote the optimal path by  $\hat{X}=\{ \hat X_t\}_{t \geq 0}$.

Next, to introduce the MFG Nash equilibrium, it is useful to emphasize the dependence of the optimal path and optimal control of the generic player, as well as its associated value, on the underlying measure flow $m$. These quantities are denoted as $\hat{X}_t[m]$, $\hat{\alpha}_t[m]$, $J[m]$, and $V[m]$, respectively.

We now present the definitions of the equilibrium measure, equilibrium path, and equilibrium control. Please also refer to page 127 of \cite{CD18II} for a general setup with a common noise.
\begin{defi}
	\label{d:ne}
	Given an initial distribution  $\mathcal L(X_0) = m_0 \in \mathcal P_2(\mathbb R)$, 
	a random measure flow $\hat m = \hat m(m_0)$ 
	is said to be an MFG equilibrium measure if it satisfies the fixed point  condition
	\begin{equation}
		\label{eq:mhat}
		\hat m_t = \mathcal L \left( \left.\hat X_t[\hat m] \right \rvert \tilde{\mathcal F}_t \right), \ \forall 0 < t \le T, \ \hbox{ almost surely in } \mathbb P.
	\end{equation}
	The path $\hat X$ and the control $\hat \alpha$ associated with $\hat m$ are called the MFG equilibrium path and equilibrium control, respectively. 
\end{defi}

\begin{figure}[h]
    \centering
    \includegraphics[width=.40\textwidth]{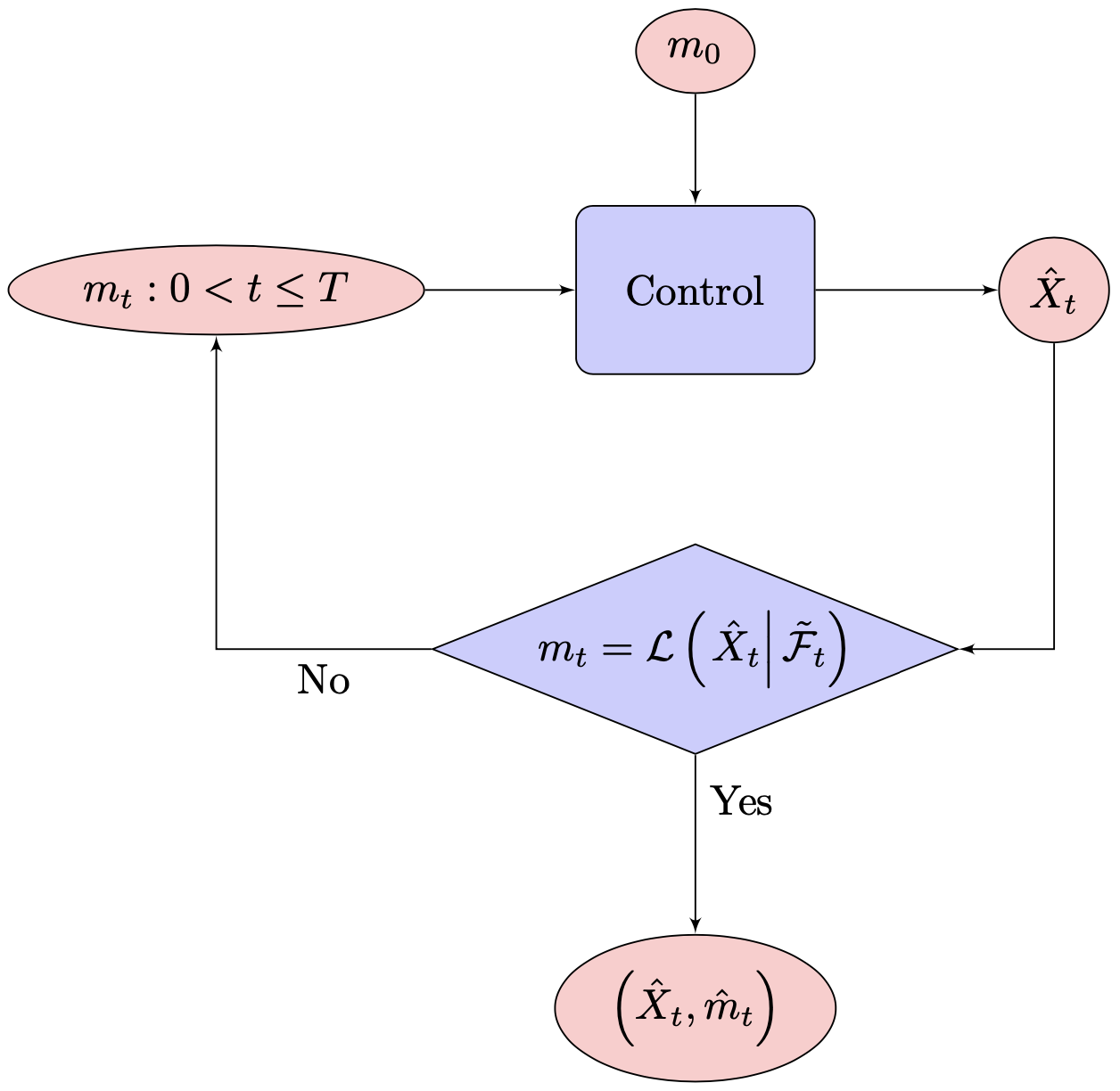}
    \caption{The MFG diagram.}
	\label{fig:MFG1}
\end{figure}

The flowchart of the MFG diagram is given in Figure \ref{fig:MFG1}. It is noted from the optimality condition \eqref{V_m} and the fixed point condition \eqref{eq:mhat} that 
$$J[\hat m](x, \hat \alpha) \le J[\hat m](x, \alpha), \quad \forall \alpha$$
holds for the equilibrium measure $\hat m$ and its associated equilibrium control $\hat \alpha$, 
while it is not
$$J[\hat m](x, \hat \alpha) \le J[m](x, \alpha), \quad \forall \alpha, m.$$
Otherwise, this problem turns into a McKean-Vlasov control problem, which is essentially different from the current Mean Field Games setup. Readers refer to \cite{CDL13, CD15} to see the analysis of this different model as well as some discussion of the differences between these two problems.

\subsection{The formulation of Nash equilibrium in $N$-player game}
\label{s:n-player}
In this subsection, we set up $N$-player game and define the Nash equilibrium of $N$-player game in the sample space $\Omega^{(N)}$.
Firstly, let $W^{(N)} = (W^{(N)}_{i}: i = 1, 2, \dots, N )$ be an $N$-dimensional standard Brownian motion constructed on the space $(\bar{\Omega}^{(N)}, \bar{\mathcal{F}}^{(N)}, \bar{\mathbb{P}}^{(N)}, \bar{\mathbb F}^{(N)} = \{ \bar{\mathcal{F}}^{(N)}_t \}_{t \geq 0})$ and 
$\tilde{W} = \{ \tilde{W}_t \}_{t \geq 0}$ be the common noise in MFG defined in 
Section \ref{s:mfgsetting} on $(\tilde{\Omega}, \tilde{\mathcal{F}}, \tilde{\mathbb{P}})$.
The probability space for the $N$-player game is 
$\left(\Omega^{(N)}, \mathcal F^{(N)}, \mathbb F^{(N)}, \mathbb P^{(N)} \right)$, which is constructed 
via the product structure with
$$\Omega^{(N)} = \bar{\Omega}^{(N)} \times \tilde{\Omega}, \quad \mathcal F^{(N)}, \quad \mathbb F^{(N)} = \left\{\mathcal F^{(N)}_t\right\}_{t \geq 0}, \quad \mathbb P^{(N)}.$$
where $(\mathcal F^{(N)}, \mathbb P^{(N)})$ is the completion of $( \bar{\mathcal F}^{(N)} \otimes 
\tilde{\mathcal F}, \bar{\mathbb P}^{(N)} \otimes \tilde{\mathbb P} )$ and $\mathbb F^{(N)}$ is the complete and right continuous augmentation of $\{\bar{\mathcal F}_t^{(N)} \otimes \tilde{\mathcal F}_t \}_{t \geq 0}$.

Consider a stochastic dynamic game with $N$ players, where each player $i \in \{1, 2, \dots, N\}$ controls a state process $X_{i}^{(N)} = \{X_{it}^{(N)} \}_{t \geq 0}$ in $\mathbb R$ given by
\begin{equation}
\label{eq:Xi}
	d X_{it}^{(N)} = \alpha_{it}^{(N)} d t + d W_{it}^{(N)} + d \tilde{W}_t, \quad X_{i0}^{(N)} = x^{(N)}_i
\end{equation}
with a control $\alpha_{i}^{(N)}$ in an admissible set $\mathcal A^{(N)} := L^4_{\mathbb F^{(N)}}$ and random initial state $x^{(N)}_i$.
	
Given the strategies $\alpha_{-i}^{(N)} = (\alpha_{1}^{(N)}, \dots, \alpha_{i-1}^{(N)}, \alpha_{i+1}^{(N)}, \dots,  \alpha_{N}^{(N)} )$ from other players, the objective of player $i$ is to select a control $\alpha_{i}^{(N)} \in \mathcal A^{(N)}$ to minimize her expected total cost given by
\begin{equation}
\label{eq:cost_N}
\begin{aligned}
	J_i^{N} \left(x^{(N)}, \alpha_i^{(N)};  \alpha_{-i}^{(N)} \right) 
	&= \mathbb E\left[ \left.\int_0^T 
	\left(\frac{1}{2}\left(\alpha_{it}^{(N)} \right)^2 
	+ F \left(X_{it}^{(N)}, \rho \left(X_t^{(N)} \right) \right) \right) dt \right \rvert X_{0}^{(N)} = x^{(N)} \right],
\end{aligned}
\end{equation}
where $x^{(N)} = (x_1^{(N)}, x_2^{(N)}, \dots, x_N^{(N)} )$ is a $\mathbb R^N$-valued random vector in $\Omega^{(N)}$ to denote the initial state for $N$ players, and 
$$\rho \left(x^{(N)} \right) = \frac{1}{N} \sum_{i=1}^N \delta_{x_i^{(N)}}$$
is the empirical measure of the vector $x^{(N)}$ with Dirac measure $\delta$. We use the notation
$\alpha^{(N)} := (\alpha_{i}^{(N)}, \alpha_{-i}^{(N)} ) = (\alpha_1^{(N)}, \alpha_2^{(N)}, \ldots, \alpha_N^{(N)})$ to denote the control from $N$ players as a whole.
Next, we give the equilibrium value function and equilibrium path in the sense of the Nash game.
	
\begin{defi}
\label{d:neN}
\begin{enumerate}
	\item The value function of player $i$  for $i = 1, 2, \ldots, N$ of the Nash game is defined by $V^N = (V^N_i: i = 1, 2, \ldots, N )$ satisfying the equilibrium condition
	\begin{equation}
	\label{eq:value_i}
	V_i^{N}\left(x^{(N)} \right) := J_i^{N} \left(x^{(N)}, \hat \alpha_i^{(N)}; \hat \alpha_{-i}^{(N)} \right) \le J_i^{N} \left(x^{(N)}, \alpha_i^{(N)}; \hat \alpha_{-i}^{(N)} \right), \quad \forall \alpha_i^{(N)} \in \mathcal A^{(N)}.
	\end{equation}
			
	\item The equilibrium path of the $N$-player game is the $N$-dimensional random path $\hat X_t^{(N)} = (\hat X_{1t}^{(N)}, \hat X_{2t}^{(N)}, \ldots, \hat X_{Nt}^{(N)} )$ driven by \eqref{eq:Xi} associated to the control $\hat \alpha_t^{(N)}$ satisfying the equilibrium condition of \eqref{eq:value_i}.
\end{enumerate}
\end{defi}

	
\subsection{Main result}
\label{s:main}
We consider three convergence
questions on $N$-player game defined in $\Omega^{(N)}$:
The first one is the convergence of the representative path $\hat X_{it}^{(N)}$, 
the second one is the convergence  of the empirical measure $\rho (\hat X_{t}^{(N)} )$, while the last one is the $t$-uniform convergence  of the empirical measure $\rho (\hat X_{t}^{(N)})$. 
To be precise, we shall assume the following throughout the paper:
\begin{assumption}
\label{a:asm1}
\begin{itemize}
\item $\mathbb E [ |X_0|^q ]<\infty$ for some $q>4$.
\item
The initials $X_{i0}^{(N)}$ of the $N$-player game is  i.i.d. random variables in $\Omega^{(N)}$ 
with the same distribution as $\mathcal L(X_0)$ in the MFG.
\end{itemize}
\end{assumption}

Note that the equilibrium path $\hat X_t^{(N)} = (\hat X_{it}^{(N)}: i = 1, 2, \ldots, N)$ is a 
vector-valued stochastic process. 
Due to the Assumption \ref{a:asm1}, the game is invariant to index reshuffling of $N$ players and the elements in $(\hat X_{it}^{(N)}: i = 1, 2, \ldots, N)$
have identical distributions, but they are not independent of each other. 

So, the first question on the representative path is indeed about $\hat X_{1t}^{(N)}$ in $\Omega^{(N)}$ and we are interested in how fast it converges to $\hat X_{t}$ in $\Omega$ in distribution:
\begin{itemize}
\item [(Q1)]
The $\mathbb W_p$-convergence rate of the representative equilibrium path, 
$$\mathbb W_p \left(\mathcal L \left(\hat X_{1t}^{(N)} \right), \mathcal L \left(\hat X_t \right) \right) 
	= O \left(N^{-?} \right).$$
\end{itemize}
The second question is about the convergence of the empirical measure  $\rho (\hat X_t^{(N)} )$ 
of the $N$-player game defined by
$$\rho \left(\hat X_t^{(N)} \right) =\frac{1}{N} \sum_{i=1}^{N} \delta_{\hat{X}_{it}^{(N)}}.$$
We are interested in how fast this converges to the MFG equilibrium measure given by
$$ \hat m_t = \mathcal L \left( \left. \hat{X}_t \right \vert \tilde{\mathcal F}_t \right), \quad \forall t \in (0, T].$$
\begin{itemize}
\item [(Q2')] 
The $\mathbb W_p$-convergence rate of empirical measures,
$$
\mathbb W_p 
\left(\rho \left(\hat X_t^{(N)} \right), \mathcal L \left(\left. \hat X_t \right \vert \tilde{\mathcal F}_t \right) \right) 
	= O \left(N^{-?} \right).
$$
\end{itemize}

Note that the left-hand side of the above equality is a random quantity 
and one shall be more precise about what the Big $O$ notation means in this context. 
Indeed, by the definition of the empirical measure, $\rho (\hat X_t^{(N)} )$ is a random distribution measurable by 
$\sigma$-algebra generated by the random vector $\hat X_t^{(N)}$.
On the other hand, $\mathcal L (\hat X_t \rvert \tilde{\mathcal F}_t)$ is a random distribution measurable by the $\sigma$-algebra $\tilde{\mathcal F}_t$. Therefore, 
from the construction of the product probability space $\Omega^{(N)}$ in Section \ref{s:n-player}, 
both random distributions $\rho (\hat X_t^{(N)} )$ and $\mathcal L (\hat X_t \rvert \tilde{\mathcal F}_t )$ 
are measurable with respect to $\mathcal F^{(N)}_t = \bar{\mathcal F}_t^{(N)} \otimes \tilde{\mathcal F}_t$. 
Consequently, $\mathbb W_p (\rho (\hat X_t^{(N)}), \mathcal L (\hat X_t \rvert \tilde{\mathcal F}_t ))$ 
is a random variable in the probability space
$(\Omega^{(N)}, \mathcal F^{(N)}, \mathbb P^{(N)} )$ and we will focus on a version of (Q2') in the $L^p$ sense:
\begin{itemize}
\item [(Q2)] 
The $\mathbb W_p$-convergence rate of empirical measures in $L^p$ sense for each $t \in [0, T]$,
$$
\left ( \mathbb E \left [ 
\mathbb W_p^p 
\left(\rho \left(\hat X_t^{(N)} \right), \mathcal L \left(\left. \hat X_t \right \vert \tilde{\mathcal F}_t \right) \right) 
\right] \right)^{\frac{1}{p}} = O \left(N^{-?} \right).
$$
\end{itemize}
In addition, we also study the following related question:
\begin{itemize}
\item [(Q3)] 
The $t$-uniform $\mathbb W_p$-convergence rate of empirical measures in $L^p$ sense,
$$
\left ( \mathbb E \left [ 
\sup_{0\le t \le T}
\mathbb W_p^p 
\left(\rho \left(\hat X_t^{(N)} \right), \mathcal L \left(\left. \hat X_t \right \vert \tilde{\mathcal F}_t \right) \right) 
\right] \right)^{\frac{1}{p}} = O \left(N^{-?} \right).
$$
\end{itemize}

In this paper, we will study the above three questions (Q1), (Q2), and (Q3)
in the framework of LQG structure with Brownian motion as a common noise with the following function $F$ 
in the cost functional \eqref{eq:total cost}.
\begin{assumption}
\label{a:asm2}
Let the function $F: \mathbb R \times \mathcal P_2(\mathbb R) \mapsto \mathbb R$ be given in the form of 
\begin{equation}
\label{eq:running cost}
	F(x, m) = k \int_{\mathbb R} (x-z)^2 m(dz) = k(x^2 -2x[m]_1 + [m]_2)
\end{equation}
for some $k>0$, where $[m]_1, [m]_2$ are the first and second moment of the measure $m$. 
\end{assumption}

The main result of this paper is presented below. Let us recall that $q$ denotes the parameter defined in Assumption \ref{a:asm1}.\begin{theorem}
\label{t:main1}
Under Assumptions \ref{a:asm1}-\ref{a:asm2}, for any $p\in [1,2]$, we have
\begin{enumerate}
\item
The $\mathbb W_p$-convergence rate of the representative equilibrium path is $1/2$, i.e., 
$$\mathbb W_p \left(\mathcal L \left(\hat X_{1t}^{(N)} \right), \mathcal L \left(\hat X_t \right) \right) 
	= O \left(N^{-\frac 1 2} \right).$$
\item The $\mathbb W_p$-convergence rate of empirical measures in $L^p$ sense is 
$$
\mathbb E \left[
\mathbb W_p^p
\left(\rho \left(\hat X_t^{(N)} \right), \mathcal L \left(\left. \hat X_t \right \vert \tilde{\mathcal F}_t \right) \right) 
\right] 
= O \left(N^{-\frac{1}{2}} \right).
$$
\item  The uniform $\mathbb W_p$-convergence rate of empirical measures in $L^p$ sense is 
$$
\mathbb E \left[
\sup_{0\le t \le T} \mathbb W_p^p
\left(\rho \left(\hat X_t^{(N)} \right), \mathcal L \left(\left. \hat X_t \right \vert \tilde{\mathcal F}_t \right) \right) 
\right] 
= 
\begin{cases}
O\left(N^{-\frac{1}{2}} \ln (N) \right), & \hbox{ if } p = 1, \\
O \left(N^{-\frac{1}{2}} \right), & \hbox{ if }  1< p \leq 2.
\end{cases}
$$
\end{enumerate}
\end{theorem}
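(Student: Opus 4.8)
The plan is to reduce all three statements to two error estimates built on an explicit solution of the two equilibria together with the decomposition of Lemma \ref{l:triangle}. First I would solve the mean field equilibrium explicitly. Writing $\bar X_t := \mathbb E[\hat X_t \mid \tilde{\mathcal F}_t]$ and taking $\tilde{\mathcal F}_t$-conditional expectations in the equilibrium system, the optimal feedback being linear in the fluctuation forces $\mathbb E[\hat\alpha_t\mid\tilde{\mathcal F}_t]=0$, so $d\bar X_t = d\tilde W_t$ and $\bar X_t = \mathbb E[X_0] + \tilde W_t$. The fluctuation $\xi_t := \hat X_t - \bar X_t$ then solves the decoupled linear SDE $d\xi_t = -A_t\xi_t\,dt + dW_t$, where $A_t = \sqrt{2k}\tanh(\sqrt{2k}(T-t))$ is the bounded solution of the scalar Riccati equation $\dot A_t = A_t^2 - 2k$, $A_T=0$. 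Hence $\hat X_t = \xi_t + \mathbb E[X_0] + \tilde W_t$ with $\xi_t$ driven only by the idiosyncratic noise and independent of $\tilde{\mathcal F}_t$, so that $\hat m_t = \mathcal L(\xi_t)*\delta_{\mathbb E[X_0]+\tilde W_t}$. The parallel computation for the symmetric $N$-player Nash system, which is the content of Lemma \ref{l:triangle}, yields $\hat X_{it}^{(N)} = \xi_{it}^{(N)} + \bar X_t^{(N)}$ with $\bar X_t^{(N)} = \frac1N\sum_j \hat X_{jt}^{(N)}$, where the fluctuations decouple up to an $O(1/N)$ correction in the feedback gain and an $O(N^{-1/2})$ common perturbation coming from $\frac1N\sum_j W_{jt}$.

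The two propositions isolate the two sources of error. The first is a pathwise comparison: introducing i.i.d. copies $\hat X_{it} = \xi_{it} + \mathbb E[X_0] + \tilde W_t$ of the mean field path on the same space, driven by the same $W_i$, $\tilde W$ and $X_{i0}$, the difference $\hat X_{it}^{(N)} - \hat X_{it}$ satisfies a linear SDE whose driving discrepancies are the $O(1/N)$ Riccati error and the rescaled average $\frac1N\sum_j W_{jt}$; Gronwall together with the Burkholder--Davis--Gundy inequality then gives $\mathbb E\big[\sup_{t\in[0,T]}|\hat X_{it}^{(N)} - \hat X_{it}|^p\big] = O(N^{-p/2})$. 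The second proposition treats the empirical measure of the i.i.d. family: since $\bar X_t = \mathbb E[X_0]+\tilde W_t$ is a common $\tilde{\mathcal F}_t$-measurable translation, invariance of $\mathbb W_p$ under a simultaneous shift of all atoms and of the target gives the crucial reduction $\mathbb W_p(\rho(\hat X_t), \hat m_t) = \mathbb W_p(\rho(\xi_t), \mathcal L(\xi_t))$, in which the common noise has disappeared and only the i.i.d. real-valued family $\{\xi_{it}\}_i$ and its deterministic law remain.

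With these in hand the three rates assemble by coupling and the triangle inequality. For (Q1), $\mathbb W_p(\mathcal L(\hat X_{1t}^{(N)}), \mathcal L(\hat X_t)) \le \|\hat X_{1t}^{(N)} - \hat X_{1t}\|_p = O(N^{-1/2})$ by the first proposition. For (Q2) and (Q3) I would insert $\rho(\hat X_t)$ as an intermediate measure, using $\mathbb W_p^p(\rho(\hat X_t^{(N)}), \hat m_t) \le 2^{p-1}\big[\mathbb W_p^p(\rho(\hat X_t^{(N)}), \rho(\hat X_t)) + \mathbb W_p^p(\rho(\hat X_t), \hat m_t)\big]$. The first summand is dominated, pointwise and uniformly in $t$, by $\frac1N\sum_i|\hat X_{it}^{(N)} - \hat X_{it}|^p$ and is thus $O(N^{-p/2}) = O(N^{-1/2})$; the second summand equals $\mathbb W_p^p(\rho(\xi_t), \mathcal L(\xi_t))$. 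For (Q2), conditioning on $\tilde{\mathcal F}_t$ and applying the Fournier--Guillin estimate (Theorem 1 of \cite{FG15}) to the i.i.d. one-dimensional sample $\{\xi_{it}\}$ yields $O(N^{-1/2})$, where the hypothesis $q>4$ of Assumption \ref{a:asm1} is exactly what makes the $N^{-1/2}$ term dominate the $N^{-(q-p)/q}$ term for all $p\in[1,2]$.

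The main obstacle is the uniform-in-time term $\mathbb E\big[\sup_{t\in[0,T]}\mathbb W_p^p(\rho(\xi_t), \mathcal L(\xi_t))\big]$ appearing in (Q3). Here I would combine the pointwise Fournier--Guillin bound with a time-regularity estimate: the $1/2$-Hölder continuity of the i.i.d. paths $\xi_{it}$, controlled through moments of their increments, bounds the oscillation of $t\mapsto \mathbb W_p(\rho(\xi_t), \mathcal L(\xi_t))$, after which discretizing $[0,T]$ on a grid of polynomial-in-$N$ mesh and taking a maximal inequality over the grid points transfers the pointwise rate to the supremum. For $1<p\le 2$ the increment moments are strong enough that the grid contributes no loss and the rate remains $O(N^{-1/2})$; for $p=1$ the borderline integrability, handled through the one-dimensional representation $\mathbb W_1(\rho(\xi_t), \mathcal L(\xi_t)) = \int_{\mathbb R}|F_t^N(x) - F_t(x)|\,dx$ and the maximal inequality over the grid, is what produces the extra logarithmic factor and the rate $O(N^{-1/2}\ln N)$. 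Sharp tracking of this $p=1$ logarithm in the uniform empirical-process estimate is the principal technical difficulty; the remainder is a routine chain of coupling and triangle-inequality bounds.
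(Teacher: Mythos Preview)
For (Q1) and (Q2) your strategy coincides with the paper's: decompose the equilibrium paths as an i.i.d.\ fluctuation plus a common $\tilde{\mathcal F}_t$-measurable shift plus an $O(N^{-1/2})$ remainder, remove the shift by translation invariance of $\mathbb W_p$, and invoke Fournier--Guillin for the i.i.d.\ piece. (One labeling point: in the paper Lemma~\ref{l:triangle} is not the $N$-player decomposition---that is Proposition~\ref{p:ABCexist}---but the abstract triangular-array lemma that packages exactly the coupling-and-triangle-inequality argument you describe.)

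The real divergence is in (Q3), and here your proposal has a gap. The paper does \emph{not} discretise in time. It rescales to $\hat Y_{it}=\mathcal E_t(2a)\hat X_{it}$ so that the idiosyncratic part reads $\gamma_i+\sigma_t\alpha_i$ with $(\gamma_i,\alpha_i)$ a \emph{fixed} two-dimensional i.i.d.\ vector and $\sigma_t$ a bounded deterministic scalar, and then applies the pushforward inequality of Lemma~\ref{l:lln2d} with the $1$-Lipschitz map $f_\sigma(x,y)=x+\sigma y$:
\[
\sup_{t\in[0,T]}\mathbb W_p^p\bigl(\rho^N(\gamma_i+\sigma_t\alpha_i),\,\mathcal L(\gamma_1+\sigma_t\alpha_1)\bigr)\;\le\;\mathbb W_p^p\bigl(\rho^N((\gamma,\alpha)),\,\mathcal L((\gamma_1,\alpha_1))\bigr).
\]
This collapses the supremum over $t$ to a \emph{single} two-dimensional empirical-measure estimate, and the $\ln N$ at $p=1$ is precisely the $d=2$ logarithm in Lemma~\ref{l:iidrate}. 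No maximal inequality or time regularity is needed.

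Your discretisation route, by contrast, needs a maximal bound over $M$ grid points. From the pointwise estimate $\mathbb E[\mathbb W_p^p(\rho(\xi_{t_k}),\mathcal L(\xi_{t_k}))]=O(N^{-1/2})$ alone, the best you get is $\mathbb E[\max_k(\cdot)]=O(M^{1/r}N^{-1/2})$ \emph{provided} you control $r$-th moments of $\mathbb W_p^p$ at the scale $N^{-r/2}$; you have not established any such concentration, and Fournier--Guillin does not supply it. Balancing this against the H\"older oscillation $O(\delta^{p/2})$ with $M=T/\delta$ then forces a genuine polynomial loss in $N$, not the claimed $O(N^{-1/2})$ for $p>1$. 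To rescue the scheme you would need sub-Gaussian deviation inequalities for the one-dimensional empirical $\mathbb W_p$ (delicate under only a $q$-th-moment hypothesis on $X_0$) plus a chaining argument---or the paper's pushforward trick, which bypasses the issue in one line.
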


We would like to provide some additional remarks on our main result. Firstly, the cost function $F$ defined in \eqref{eq:cost_N} applies to the running cost for the $i$-th player in the $N$-player game, and it takes the form:
\begin{equation}
\label{eq:running cost_n player}
    F \left(X_{it}^{(N)}, \rho \left(X_t^{(N)} \right) \right)  =
	\frac{k}{N}\sum_{j = 1}^N \left(X_{it}^{(N)}-X_{jt}^{(N)} \right)^2.
\end{equation}
Interestingly, if $k<0$, although $F$ does satisfy the Lasry-Lions monotonicity (\cite{Car10}) as demonstrated in Appendix 6.1 of \cite{JLSY22}, there is no global solution for MFG due to the concavity in $x$. 
On the contrary, when $k>0$, 
$F$ satisfies the displacement monotonicity proposed in \cite{GMMZ22} as shown by the following derivation: 
$$\mathbb E \left[(F_x(X_1, \mathcal L(X_1))- F_x(X_2, \mathcal L(X_2))) (X_1 - X_2) \right] = 2 k \left(\mathbb E \left[(X_1 - X_2)^2 \right] - \left(\mathbb E [X_1 - X_2] \right)^2 \right) \ge 0.$$


\section{Proof of the main result with two propositions}\label{s:proof}
Our objective is to investigate the relations between $(\hat X_{1t}^{(N)}, \hat X_{2t}^{(N)}, \ldots, \hat X_{Nt}^{(N)})$ and $\hat X_t$ 
described in (Q1), (Q2), and (Q3). 
In this part, we will give the proof of Theorem \ref{t:main1} based on two propositions whose proof will be given later.
\begin{prop}
\label{p:mfg}
Under Assumptions \ref{a:asm1}-\ref{a:asm2}, the MFG equilibrium path $\hat X = \hat X [\hat m]$ is given by
\begin{equation}
\label{eq:Xhat03p}
	d \hat X_t = - 2 a(t) \left(\hat X_t - \hat \mu_t \right) dt + dW_t + d \tilde{W}_t, \quad \hat X_0 = X_0,
\end{equation}
where  $a$ is the solution of 	
\begin{equation}
\label{eq:a_odep}
a'(t) -2a^2(t) + k = 0, \quad a(T) = 0,
\end{equation}
and $\hat \mu$ is
$$ \hat \mu_t := \mathbb E \left[\left. \hat{X}_t \right\vert \tilde{\mathcal F}_t \right] = \mathbb E[X_0] + \tilde{W}_t.$$
Moreover, the equilibrium control follows
\begin{equation}
\label{eq:alpha011p}
	\hat \alpha_t =  - 2 a(t) \left(\hat X_t - \hat \mu_t \right).
\end{equation}

\end{prop}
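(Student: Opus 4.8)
The plan is to read Proposition \ref{p:mfg} as the combination of a standard linear-quadratic stochastic control problem for the generic player, solved for a fixed measure flow, together with the closing of the fixed-point condition \eqref{eq:mhat}. Because $F(x,m)=k(x^2-2x[m]_1+[m]_2)$ is strictly convex in $x$ (as $k>0$) and the control cost $\frac12\alpha^2$ is strictly convex in $\alpha$, while the dynamics \eqref{eq:X} are linear with constant, uncontrolled diffusion coefficients, the Pontryagin stochastic maximum principle is both necessary and sufficient. First I would fix an $\tilde{\mathbb F}$-adapted flow $m$ and write the Hamiltonian $H(x,\alpha,y)=\alpha y+\frac12\alpha^2+F(x,m)$; minimizing over $\alpha$ gives $\hat\alpha_t=-Y_t$, and the adjoint equation is the BSDE $dY_t=-F_x(X_t,m_t)\,dt+Z_t\,dW_t+\tilde Z_t\,d\tilde W_t$ with $Y_T=0$, where $F_x(x,m)=2k(x-[m]_1)$. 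Note that $[m]_2$ enters the cost only as an additive term and hence does not affect the minimizer, so only the first moment $[m_t]_1$ of the flow matters; this is exactly why $\hat\mu$ but not the second moment appears in the final answer.

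Next I would close the fixed point. Since in equilibrium $[m_t]_1=\mathbb{E}[\hat X_t\mid\tilde{\mathcal F}_t]=\hat\mu_t$, I would posit the linear decoupling ansatz $Y_t=2a(t)(\hat X_t-\hat\mu_t)$ for a deterministic function $a$; then $\hat\alpha_t=-Y_t=-2a(t)(\hat X_t-\hat\mu_t)$, which is \eqref{eq:alpha011p}, and substituting into \eqref{eq:X} produces the candidate dynamics \eqref{eq:Xhat03p}. To compute $\hat\mu$, I would take $\mathbb{E}[\,\cdot\mid\tilde{\mathcal F}_t]$ on both sides of this SDE: by definition $\mathbb{E}[\hat X_t-\hat\mu_t\mid\tilde{\mathcal F}_t]=0$, so the drift contributes nothing, while $\mathbb{E}[dW_t\mid\tilde{\mathcal F}_t]=0$ and $\mathbb{E}[d\tilde W_t\mid\tilde{\mathcal F}_t]=d\tilde W_t$ by the independence of $W$ and $\tilde W$ on the product space $\Omega=\bar\Omega\times\tilde\Omega$; hence $d\hat\mu_t=d\tilde W_t$ and $\hat\mu_t=\mathbb{E}[X_0]+\tilde W_t$. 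Finally, applying It\^o's formula to $Y_t=2a(t)(\hat X_t-\hat\mu_t)$ and matching its drift against the adjoint BSDE drift $-2k(\hat X_t-\hat\mu_t)$ yields $2a'(t)-4a^2(t)=-2k$, i.e. the Riccati equation \eqref{eq:a_odep}, while $Y_T=0$ forces $a(T)=0$. A bounded nonnegative solution on $[0,T]$ exists because $k>0$, so the decoupling is well defined.

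I anticipate that the main obstacle is the rigorous treatment of the common noise in the closing step rather than the algebra. Specifically, I must justify that the decoupling field $a$ may be taken deterministic (so that the randomness of the adjoint process lives only in $\hat\mu$ and in the martingale terms), verify well-posedness of the resulting conditional \mv FBSDE, and carefully interchange conditional expectation with the stochastic dynamics using the product structure and the independence of $W$, $\tilde W$, and $X_0$. Once the consistency $\mathbb{E}[\hat X_t-\hat\mu_t\mid\tilde{\mathcal F}_t]=0$ and the identity $\hat\mu_t=\mathbb{E}[X_0]+\tilde W_t$ are established, the sufficiency of the maximum principle (from the strict convexity of $H$) guarantees that the constructed pair $(\hat X,\hat\alpha)$ is the genuine equilibrium path and control, completing the proof.
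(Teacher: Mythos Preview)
Your argument is correct, but it takes a genuinely different route from the paper. The paper proceeds via dynamic programming: it first restricts the search to measure flows whose first two moments obey a parametric SDE system with unknown coefficient functions $w_1,\ldots,w_6$, solves the resulting HJB equation with a quadratic ansatz $v(t,x,\bar\mu,\bar\nu)=a x^2+b\bar\mu^2+c\bar\nu+d+ex+f\bar\mu+gx\bar\mu$, obtains a Riccati system in $(a,\ldots,g)$, and then closes the fixed point by matching the conditional-moment dynamics of $\hat X$ against the postulated $w_i$'s, eventually discovering the collapses $g=-2a$ and $e=f=0$. You instead use the stochastic maximum principle: the adjoint BSDE, the observation that $[m]_2$ drops out of $F_x$, and the decoupling ansatz $Y_t=2a(t)(\hat X_t-\hat\mu_t)$ lead directly to \eqref{eq:a_odep} after one application of It\^o's formula. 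Your path is shorter and avoids the auxiliary $(w_i)$ machinery and the second-moment process $\nu$ altogether; the paper's approach, on the other hand, delivers the full value function $\bar V$ (and its dependence on $\bar\nu$) as a by-product, and its verification step (Lemma~\ref{l:verification}) handles optimality without invoking sufficiency of the maximum principle. Both approaches rely on the same conditional-expectation computation to show $\hat\mu_t=\mathbb E[X_0]+\tilde W_t$.
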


\begin{prop}
\label{p:ABCexist}
Suppose Assumptions \ref{a:asm1}-\ref{a:asm2} hold. For the $N$-player game, the path and the control of player $i$ under the equilibrium are given by
\begin{equation}
\label{eq:XihatNp}
    d \hat X_{it}^{(N)}  = - 2 a^N(t)  \left(\hat X_{it}^{(N)}  -\frac{1}{N-1}\sum_{j \ne i}^N  \hat X_{jt}^{(N)} 
	\right) dt + dW_{it}^{(N)} + d \tilde{W}_t,
\end{equation}
and
$$ \hat{\alpha}_{it}^{(N)} = - 2a^N (t) \left(\hat X_{it}^{(N)} -\frac{1}{N-1}\sum_{j \ne i}^N  \hat X_{jt}^{(N)} \right)$$
respectively for $i = 1, 2, \dots, N$, where
$a^{N}$ is the solution of 
\begin{equation}
\label{eq:a_12p}
 a' -\frac{2(N+1)}{N-1}a^2 + \frac{N-1}{N} k = 0, \quad  a(T)= 0.
\end{equation}
\end{prop}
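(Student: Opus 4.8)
The plan is to compute the Nash equilibrium directly by solving the best-response problem of a single player and then closing the loop with the consistency (fixed-point) condition. Fix an index $i$ and suppose every other player $j \ne i$ uses the symmetric linear feedback $\alpha_{jt}^{(N)} = -2a^N(t)\,\big(X_{jt}^{(N)} - \frac{1}{N-1}\sum_{l \ne j} X_{lt}^{(N)}\big)$ asserted in the statement. Substituting these controls into \eqref{eq:Xi} turns the vector $(X_{1t}^{(N)},\dots,X_{Nt}^{(N)})$ into a linear diffusion driven by $(W^{(N)},\tilde W)$ and controlled only through $\alpha_{it}^{(N)}$, while the running cost in \eqref{eq:cost_N}, namely $\frac{k}{N}\sum_{j}(X_{it}^{(N)}-X_{jt}^{(N)})^2$, is quadratic in the state. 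Hence player $i$ faces a standard linear--quadratic stochastic control problem, which I would solve by dynamic programming.

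For the associated HJB equation I would posit a value function that is quadratic in the state and respects the permutation symmetry among the indices $\{1,\dots,N\}\setminus\{i\}$, i.e.\ of the form $V_i(t,x) = A(t)\,x_i^2 + B(t)\,x_i\sum_{j\ne i}x_j + C(t)\,\big(\sum_{j\ne i}x_j\big)^2 + \dots$ with finitely many scalar coefficient functions (whence the label of the proposition). Pointwise minimization in the Hamiltonian gives the best response $\hat\alpha_{it}^{(N)} = -\partial_{x_i}V_i$, which is linear in the state, and inserting the ansatz into the HJB equation yields a closed system of scalar Riccati-type ODEs with terminal data zero at $t=T$. The decisive step is the Nash consistency requirement: the best-response feedback $-\partial_{x_i}V_i$ must coincide with the symmetric feedback that the other players were assumed to use. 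Here the algebraic identity $x_i - \frac{1}{N-1}\sum_{j\ne i}x_j = \frac{N}{N-1}(x_i - \bar x)$, with $\bar x = \frac1N\sum_j x_j$, is convenient for collapsing the symmetric quadratic form; imposing consistency eliminates the auxiliary coefficients and reduces the whole system to the single scalar equation \eqref{eq:a_12p} for $a^N$.

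It then remains to show that \eqref{eq:a_12p} admits a solution on all of $[0,T]$, which is the existence content of the statement. Writing it as $a' = \frac{2(N+1)}{N-1}a^2 - \frac{N-1}{N}k$ with $a(T)=0$ and reading backward in time, a phase-line (or comparison) argument shows that $a^N$ increases from $0$ and stays trapped in $\big[0,\sqrt{\tfrac{(N-1)^2 k}{2N(N+1)}}\,\big)$, so no blow-up occurs and $a^N$ is bounded on $[0,T]$; this is precisely the feature that fails for $k<0$ and is consistent with the limiting MFG Riccati equation \eqref{eq:a_odep} (Proposition \ref{p:mfg}) as $N\to\infty$. Finally, the candidate feedback is bounded in $t$ and linear in the state, hence admissible in $\mathcal A^{(N)}=L^4_{\mathbb F^{(N)}}$, and since the running cost is convex in $(x_i,\alpha_i)$ (indeed $\partial_{x_i}^2 F = 2k(N-1)/N>0$) the verification theorem makes the best response both optimal and unique, so the symmetric feedback \eqref{eq:XihatNp} is the Nash equilibrium.

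The main obstacle I anticipate lies in the symmetric reduction together with the consistency step: one must choose the quadratic ansatz rich enough to be invariant under the closed-loop dynamics, yet verify that imposing the fixed-point condition is self-consistent and genuinely collapses the multi-dimensional Riccati system to the single equation \eqref{eq:a_12p}, rather than to an over- or under-determined set of ODEs. Confirming the boundedness of $a^N$, ideally uniformly in $N$ so that it feeds the later convergence estimates, is a secondary but important point.
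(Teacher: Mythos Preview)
Your proposal is correct and the strategy is sound, but it proceeds along a somewhat different axis than the paper. The paper does not fix the other players' feedbacks in advance and solve a single best-response problem; instead it writes down the full coupled HJB system for all $N$ players simultaneously (Lemma~\ref{l:riccati-N}), obtains the associated matrix Riccati system for $(A_i,B_i,C_i)$ with $O(N^3)$ scalar unknowns, and then posits a four-parameter pattern $(a_1,a_2,a_3,a_4)$ for the entries of $A_i$ reflecting the permutation symmetry. The key algebraic step there is that the ansatz produces \emph{two} equations for $a_3$, and forcing their compatibility yields $a_3=-\tfrac{1}{N-1}a_1$, after which everything collapses to the two-equation system~\eqref{eq:a_12}, of which only the first line is~\eqref{eq:a_12p}. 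Your route---assume the symmetric feedback, solve one LQ problem with $a^N$ appearing as a parameter in the drift, and close via the fixed-point condition $\partial_{x_i}V_i=2a^N\big(x_i-\tfrac{1}{N-1}\sum_{j\ne i}x_j\big)$---is the ``guess-and-verify'' counterpart: it front-loads the symmetry and keeps the Riccati system small from the outset, at the cost of having to check that the consistency relations (e.g.\ $B=-\tfrac{2A}{N-1}$) are actually compatible with the ODEs they generate. That compatibility check is exactly analogous to the paper's two-$a_3$ argument, and you correctly flag it as the delicate point. Both arguments need, and both give, existence of $a^N$ on $[0,T]$; the paper simply writes down the explicit hyperbolic-tangent solution, while your phase-line argument achieves the same and has the merit of giving the $N$-uniform bound $0\le a^N<\sqrt{k/2}$ directly.
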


\subsection{Preliminaries} 

We first recall the convergence rate of empirical measures of i.i.d. sequence provided in Theorem 1 of \cite{FG15} and Theorem 5.8 of \cite{CD18I}. 
\begin{lemma}
\label{l:iidrate} 
Let $d = 1$ or $2$.
Suppose $\{X_i: i \in \mathbb N\}$ is a sequence of $d$ dimensional i.i.d. random variables with $\mathbb E [|X_1|^q] < \infty$ for some $q>4$. 
Then, the empirical measure 
$$\rho^N(X) = \frac 1 N \sum_{i=1}^N \delta_{X_i}$$
satisfies
$$
\mathbb E \left[ \mathbb W_p^p \left( \rho^N(X), \mathcal L(X_1) \right) \right] = 
\begin{cases}
O \left(N^{-1/2} \right), & \hbox{ if } p \in (1,2], \\
O \left(N^{-1/2} \right), & \hbox{ if } p=1, d =1, \\
O \left(N^{-1/2} \ln N \right), & \hbox{ if } p=1, d= 2.
\end{cases}
$$
\end{lemma}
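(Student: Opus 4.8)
The plan is to derive Lemma \ref{l:iidrate} as a direct specialization of the general non-asymptotic moment estimate of Fournier and Guillin (Theorem 1 of \cite{FG15}), of which Theorem 5.8 of \cite{CD18I} is the mean-field restatement. First I would record the general bound in the following form: for $\mu = \mathcal L(X_1) \in \mathcal P_q(\mathbb R^d)$ with $M_q := \int_{\mathbb R^d} |x|^q \, d\mu < \infty$ and $q > p$, there is a constant $C = C(p,d,q)$ such that, for all $N$,
\[
\mathbb E \left[ \mathbb W_p^p \left( \rho^N(X), \mu \right) \right]
\le C M_q^{p/q}
\begin{cases}
N^{-1/2} + N^{-(q-p)/q}, & p > d/2, \\
N^{-1/2} \ln(1+N) + N^{-(q-p)/q}, & p = d/2, \\
N^{-p/d} + N^{-(q-p)/q}, & p < d/2,
\end{cases}
\]
subject to the non-degeneracy conditions $q \ne 2p$ in the first two branches and $q \ne d/(d-p)$ in the third. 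The claimed rates are then read off by identifying which branch is active and checking that the tail term $N^{-(q-p)/q}$ is always dominated by the leading term.

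Next I would verify that the hypotheses of the lemma, namely $d \in \{1,2\}$, $p \in [1,2]$, and $q > 4$, place each row into the correct branch. Since $q > 4 \ge 2p$ for every $p \in [1,2]$, the exceptional equalities $q = 2p$ and $q = d/(d-p)$ never arise, so the bound applies verbatim; moreover $(q-p)/q = 1 - p/q > 1 - 2/4 = 1/2$, whence $N^{-(q-p)/q} = o(N^{-1/2})$ is negligible against the leading term in every case. For $d = 1$ one has $p \ge 1 > 1/2 = d/2$, so the first branch applies and gives $O(N^{-1/2})$, which covers both the row $p \in (1,2]$ and the row $p=1, d=1$. For $d = 2$ one has $d/2 = 1$: when $p \in (1,2]$ the first branch again yields $O(N^{-1/2})$, while the borderline $p = 1 = d/2$ falls into the second branch and produces the logarithmic correction $O(N^{-1/2}\ln N)$, matching the row $p=1, d=2$. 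Since $p \ge 1 \ge d/2$ throughout, the third branch is never invoked, consistent with the absence of any $N^{-p/d}$ rate in the statement.

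The substantive content is entirely contained in the cited estimate, so within this reduction there is no genuine obstacle beyond careful bookkeeping of the regime boundaries. The only point requiring attention is the borderline case $p = d/2$ (here $p=1$, $d=2$), where the logarithmic factor is sharp and must not be discarded, together with the uniform check that the moment exponent $q > 4$ is large enough to suppress the tail term across the whole range $p \in [1,2]$. Were one instead to reprove the Fournier--Guillin bound from first principles, the real difficulty would shift to the multiscale (dyadic) partition of $\mathbb R^d$ and the concentration and moment control of the empirical mass in each cell, along with the matching lower bound that pins down the borderline logarithm; for the present lemma, however, it suffices to invoke \cite{FG15} and \cite{CD18I}.
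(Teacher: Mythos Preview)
Your proposal is correct and matches the paper's approach: the paper does not prove this lemma at all but simply states it as a recall of Theorem~1 of \cite{FG15} and Theorem~5.8 of \cite{CD18I}. Your case-by-case verification of the regime boundaries and the check that $q>4$ renders the tail term $N^{-(q-p)/q}$ negligible is exactly the bookkeeping needed to extract the stated rates from those references, and in fact goes further than the paper, which leaves this verification implicit.
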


Next, we give the definition of some notations that will be used in the following part. Denote $C_b(\mathbb R^d)$ to be the collection of bounded and continuous functions on $\mathbb R^d$, and let $C^1_b(\mathbb R^d) \subset C_b(\mathbb R^d)$ be the space of functions on $\mathbb R^d$ whose first order derivative is also bounded and continuous.

\begin{lemma}
\label{l:joint}
Suppose $m_1, m_2$ are two probability measures on $\mathcal B(\mathbb R^d)$ and $f\in C_b^1(\mathbb R^d, \mathbb R)$, where $\mathcal B(\mathbb R^d)$ is the Borel set on $\mathbb R^d$. Then, 
$$\mathbb W_p(f_* m_1, f_* m_2) \le |Df|_0 \mathbb W_p(m_1, m_2),$$
where $f_* m_j$ is the pushforward measure for $j = 1, 2$, and $|Df|_0 = \sup_{x \in \mathbb R^d} \max\{|\partial_{x_i} f (x)|: i = 1, 2, \dots, d\}.$
\end{lemma}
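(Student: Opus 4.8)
The plan is to prove this by the coupling characterization of the Wasserstein distance, exploiting the single structural fact that a map $f \in C_b^1$ is Lipschitz with constant at most $|Df|_0$. The whole argument rests on one observation: if $\pi \in \Pi(m_1, m_2)$ is any coupling of $m_1$ and $m_2$ on $\mathbb R^d \times \mathbb R^d$, then its pushforward $\Phi_* \pi$ under the map $\Phi(x,y) = (f(x), f(y))$ is a coupling of the pushforward measures $f_* m_1$ and $f_* m_2$ on $\mathbb R \times \mathbb R$. Thus every coupling of $m_1, m_2$ produces, essentially for free, a coupling of $f_* m_1, f_* m_2$, and one controls the transport cost of the latter by that of the former.

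First I would record the Lipschitz estimate. For $x, y \in \mathbb R^d$, writing $f(x) - f(y) = \int_0^1 \nabla f(y + t(x-y)) \cdot (x-y)\, dt$ and bounding the integrand termwise gives $|f(x) - f(y)| \le |Df|_0\, |x - y|$; in the one-dimensional case $d=1$ relevant to the paper this is immediate from the mean value theorem, with $|Df|_0 = \sup_x |f'(x)|$ being exactly the Lipschitz constant. Next I would verify the coupling claim by a marginal check: for a Borel set $A \subseteq \mathbb R$, the first marginal of $\Phi_* \pi$ assigns to $A$ the mass $\pi\big(f^{-1}(A) \times \mathbb R\big) = m_1\big(f^{-1}(A)\big) = (f_* m_1)(A)$, and symmetrically the second marginal equals $f_* m_2$, so $\Phi_* \pi \in \Pi(f_* m_1, f_* m_2)$.

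With these two ingredients in hand, the change-of-variables formula for pushforwards followed by the Lipschitz estimate yields
$$\int_{\mathbb R \times \mathbb R} |u - v|^p \, d(\Phi_* \pi)(u,v) = \int_{\mathbb R^d \times \mathbb R^d} |f(x) - f(y)|^p \, d\pi(x,y) \le |Df|_0^p \int_{\mathbb R^d \times \mathbb R^d} |x-y|^p \, d\pi(x,y).$$
Since $\Phi_* \pi$ is an admissible coupling of $f_* m_1$ and $f_* m_2$, its cost bounds $\mathbb W_p^p(f_* m_1, f_* m_2)$ from above, so the left-hand side dominates $\mathbb W_p^p(f_* m_1, f_* m_2)$. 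Taking the infimum over all $\pi \in \Pi(m_1, m_2)$ on the right and then extracting $p$-th roots gives $\mathbb W_p(f_* m_1, f_* m_2) \le |Df|_0\, \mathbb W_p(m_1, m_2)$, as claimed.

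I do not anticipate a substantive obstacle: this is a routine contraction-under-Lipschitz-pushforward estimate. The only points requiring genuine care are the marginal verification that $\Phi_* \pi$ is again a coupling, and the Lipschitz bound; neither is deep, and both are self-contained.
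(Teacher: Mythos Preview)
Your proposal is correct and follows essentially the same approach as the paper: define the product map $\Phi(x,y)=(f(x),f(y))$, observe that $\Phi_*\pi\in\Pi(f_*m_1,f_*m_2)$ whenever $\pi\in\Pi(m_1,m_2)$, then bound $|f(x)-f(y)|^p$ by $|Df|_0^p\,|x-y|^p$ and take the infimum. You spell out the Lipschitz estimate and the marginal check in slightly more detail than the paper does, but the structure is identical.
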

\begin{proof}
We define a function $F(x, y) = (f(x), f(y)): \mathbb R^{2d} \mapsto \mathbb R^2$.
Note that, for any $\pi \in \Pi(m_1, m_2)$, $F_* \pi \in  \Pi(f_* m_1, f_* m_2)$, i.e., 
$$
F_* \Pi(m_1, m_2) \subset \Pi(f_* m_1, f_* m_2).
$$
Therefore, we have the following inequalities:
\begin{equation*}
\begin{aligned}
\mathbb W_p^p(f_*m_1, f_*m_2) & = 
\inf_{\pi' \in \Pi(f_*m_1, f_*m_2)} \int_{\mathbb R^2} |x-y|^p \pi'(dx, dy) \\ 
& \leq \inf_{\pi' \in  F_* \Pi(m_1, m_2) } \int_{\mathbb R^2} |x-y|^p \pi'(dx, dy) \\ & = \inf_{\pi \in \Pi(m_1, m_2)} \int_{\mathbb R^{2d}} |f(x)-f(y)|^p \pi(dx, dy) \\ 
& \leq |Df|_0^p \inf_{\pi \in \Pi(m_1, m_2)} \int_{\mathbb R^{2d}} |x-y|^p \pi(dx, dy) \\ 
& = |Df|_0^p \mathbb W_p^p(m_1, m_2).
\end{aligned}
\end{equation*}
\end{proof}

\begin{lemma}
\label{l:lln2d}
Let $\{X_i: i \in \mathbb N\}$ be a sequence of $d$ dimensional random variables in $(\Omega, \mathcal F, \mathbb P)$. 
Let $f\in C_b^1(\mathbb R^d)$. We also denote by $f(X)$ the sequence $\{f(X_i): i \in \mathbb N\}$. Then 
$$\mathbb W_p \left( \rho^N(f(X)), \mathcal L(f(X_1)) \right) \le |D f|_0  \mathbb W_p \left( \rho^N(X), \mathcal L(X_1) \right), \ \hbox{ almost surely}$$
where $|Df|_0 = \sup_{x \in \mathbb R^d} \max\{|\partial_{x_i} f (x)|: i = 1, 2, \dots, d\}.$
\end{lemma}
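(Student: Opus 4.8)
The plan is to recognize this statement as an immediate pathwise application of Lemma \ref{l:joint}, the point being that both the empirical measure and the law transform compatibly under pushforward by $f$. So there is genuinely nothing to prove beyond two bookkeeping identities and one invocation of the preceding lemma.

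First I would record the two elementary identities. Since pushforward is linear on convex combinations of Dirac masses,
$$f_* \rho^N(X) = f_* \left(\frac{1}{N}\sum_{i=1}^N \delta_{X_i}\right) = \frac{1}{N}\sum_{i=1}^N \delta_{f(X_i)} = \rho^N(f(X)),$$
so the empirical measure of the transformed sample is exactly the pushforward of the empirical measure. For the law, the standard change-of-variables identity gives $\mathcal L(f(X_1)) = f_* \mathcal L(X_1)$. These two facts say that the quantities appearing on the left-hand side of the claim are precisely pushforwards of the quantities on the right-hand side.

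Next I would fix $\omega \in \Omega$ and regard $\rho^N(X)(\omega) = \frac{1}{N}\sum_{i=1}^N \delta_{X_i(\omega)}$ as a deterministic probability measure on $\mathcal B(\mathbb R^d)$, with $\mathcal L(X_1)$ a fixed probability measure. Applying Lemma \ref{l:joint} with $m_1 = \rho^N(X)(\omega)$, $m_2 = \mathcal L(X_1)$ and the function $f \in C_b^1(\mathbb R^d)$ yields
$$\mathbb W_p\left(f_* \rho^N(X)(\omega), f_* \mathcal L(X_1)\right) \le |Df|_0 \, \mathbb W_p\left(\rho^N(X)(\omega), \mathcal L(X_1)\right).$$
Substituting the two identities from the previous paragraph converts the left side into $\mathbb W_p(\rho^N(f(X))(\omega), \mathcal L(f(X_1)))$, which is the claimed bound at $\omega$. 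Since the argument is valid for every $\omega$, the inequality holds almost surely.

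The statement is essentially a corollary, so I expect no serious obstacle; the only items deserving a word of care are the following. Because $\rho^N(X)$ is a \emph{random} measure, Lemma \ref{l:joint} must be invoked realization by realization, and one should note that $\omega \mapsto \mathbb W_p(\rho^N(X)(\omega), \mathcal L(X_1))$ is measurable so that the conclusion is a legitimate almost sure inequality between random variables. Moreover, since $f$ is bounded, both $f_*\rho^N(X)$ and $f_*\mathcal L(X_1)$ are supported in a bounded set and hence have finite $p$-th moments, so the left-hand Wasserstein distance is always finite and well-defined; if the right-hand side happens to be infinite the bound is trivial.
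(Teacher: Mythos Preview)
Your proposal is correct and follows essentially the same approach as the paper: both arguments verify that $\rho^N(f(X)) = f_*\rho^N(X)$ and $\mathcal L(f(X_1)) = f_*\mathcal L(X_1)$, then invoke Lemma~\ref{l:joint} pointwise in $\omega$. The only difference is cosmetic---the paper checks the empirical-measure identity by pairing against test functions while you appeal directly to linearity of pushforward on Dirac masses---and your additional remarks on measurability and finiteness are welcome extra care that the paper omits.
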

\begin{proof}
For any sequence $\{c_i: i \in \mathbb N\}$ in $\mathbb R^d$, the empirical measure
$\rho^N(c) := \frac 1 N \sum_{i=1}^N \delta_{c_i}$
satisfies
$$\rho^N(f(c)) = f_* \rho^N(c),$$
since
$$
\langle \phi, \rho^N(f(c)) \rangle = \frac 1 N \sum_{i=1}^N \phi (f (c_i)) = \langle \phi \circ f, \rho^N(c)\rangle, \quad \forall \phi\in C_b(\mathbb R^d).$$
This implies that 
$$\rho^N(f(X)) = f_*\rho^N(X), \ \hbox{ almost surely}.$$
On the other hand, we also have
$$\mathcal L(f(X_1)) (A) = \mathbb P(f(X_1) \in A) = \mathbb P(X_1 \in f^{-1}(A)) = f_* \mathcal L(X_1) (A), \quad \forall A \in \mathcal B(\mathbb R^d).$$
Therefore, the conclusion follows by applying Lemma \ref{l:joint}.
\end{proof}

\subsection{Empirical measures of a sequence with a common noise}
We are going to apply lemmas from the previous subsection to study the convergence of empirical measures of a sequence with a common noise in the following sense.
\begin{defi} \label{d:com1}
We say a sequence of random variables $X = \{X_i: i \in \mathbb N\}$ is a sequence with a common noise, if there exists a random variable $\beta$ such that
\begin{itemize}
\item
 $X-\beta = \{X_i - \beta: i \in \mathbb N\}$ is a sequence of i.i.d. random variables,
 \item 
 $\beta$ is independent to $X-\beta$.
\end{itemize}
\end{defi}
By this definition,  a sequence with a common noise is i.i.d. if and only if $\beta$ is a deterministic constant.

\begin{example}\label{e:01}
Let $q>4$ be a given constant and $X = \{X_i: i \in \mathbb N\}$ be a $1$-dimensional sequence of $L^q$ random variables with a common noise term $\beta$, 
where 
$$X_i - \beta = \gamma_i + \sigma \alpha_i.$$
In above, $\{(\alpha_i, \gamma_i): i \in \mathbb N\}$ is a sequence of $2$-dimensional i.i.d. random variables independent to $\beta$, and $\sigma$ is a given non-negative constant.
Let $\rho^N(X)$ be the empirical measure defined by
$$\rho^N(X) = \frac 1 N \sum_{i=1}^N \delta_{X_i}.$$
\end{example}
The first question is
\begin{itemize}
\item
[(Qa)] In Example \ref{e:01}, where does $\rho^N(X)$ converge to?
\end{itemize}
For any test function $\phi \in C_b (\mathbb R)$,
$$
\langle \phi, \rho^N(X) \rangle \displaystyle = \frac 1 N \sum_{i=1}^N \phi(X_i)
 =  \frac 1 N \sum_{i=1}^N \phi(\gamma_i + \sigma\alpha_i + \beta).
$$
Since $\beta$ is independent to $(\alpha_i, \gamma_i)$, by Example 4.1.5 of 
\cite{Dur05} together with the Law of Large Numbers, we have
$$ \frac 1 N \sum_{i=1}^N \phi(\gamma_i + \sigma\alpha_i  + c) 
\to  \mathbb E[ \phi( \gamma_1 + \sigma\alpha_1 + c)] = 
\mathbb E[ \phi ( \gamma_1 + \sigma\alpha_1 + \beta)|\beta = c],  \quad \forall c\in \mathbb R.$$ 
Therefore, we conclude that
\begin{equation*}
\begin{aligned}
\langle \phi, \rho^N(X) \rangle  & \to \mathbb E [ \phi(\gamma_1 + \sigma\alpha_1 + \beta) | \beta ], \quad \beta-a.s.  \\
& = \langle \phi, \mathcal L(\gamma_1 + \sigma\alpha_1 + \beta | \beta) \rangle, \quad \beta-a.s. \\
\end{aligned}
\end{equation*}
Hence, the answer for the (Qa) is
\begin{itemize}
\item
$\rho^N(X) \Rightarrow \mathcal L(X_1 | \beta)$, $\beta$-a.s. 
More precisely, since all random variables are square-integrable, the weak convergence implies, for all $p\in [1,2]$, 
$$\mathbb W_p \left(\rho^N(X), \mathcal L \left(X_1 |\beta \right) \right) \to 0, \quad \beta-a.s.$$
\end{itemize}
The next question is 
\begin{itemize}
\item
[(Qb)] In Example \ref{e:01}, what's the convergence rate in the sense 
$\mathbb E \left[ \mathbb W_p^p \left(\rho^N(X), \mathcal L \left(X_1 |\beta \right) \right) \right]$?
\end{itemize}
 Since $\beta$ is independent to $\gamma_1 + \sigma\alpha_1 $, by Example 4.1.5 of 
\cite{Dur05}, we have
$$\mathbb E[ \phi(\gamma_1 + \sigma\alpha_1  + \beta)|\beta = c]  
= \mathbb E[ \phi(\gamma_1 + \sigma\alpha_1  + c)], \quad 
\forall \phi\in C_b (\mathbb R), c\in \mathbb R,$$ 
or equivalently, if one takes $c = \beta(\omega)$,
$$\mathcal L(X_1|\beta) (\omega) = 
\mathcal L(\gamma_1 + \sigma\alpha_1  + \beta| \beta) (\omega)
= \mathcal L(\gamma_1 + \sigma\alpha_1  +c).$$
On the other hand, with $c = \beta (\omega)$, 
$$
\rho^N(X) (\omega) = \rho^N(X (\omega)) 
= \frac 1 N \sum_{i=1}^N \delta_{\gamma_i(\omega) + \sigma\alpha_i (\omega) + c}.
$$
From the above two identities, with $c = \beta(\omega)$, we can write
\begin{equation}
\label{eq:was01}
\mathbb W_p \left(\rho^N(X)(\omega), \mathcal L(X_1  | \beta = c) (\omega) \right) =  \mathbb W_p \left( 
\frac 1 N \sum_{i=1}^N \delta_{\gamma_i(\omega) + \sigma\alpha_i (\omega) + c},  \mathcal L(\gamma_1 + \sigma\alpha_1  + c)
\right).
\end{equation}
Now we can conclude (Qb) in the next lemma.
\begin{lemma}
\label{l:e01}
Let $p\in [1, 2]$ be a given constant.
For a sequence $X = \{X_i: i \in \mathbb N\}$ with a common noise $\beta$ as of Example \ref{e:01}, we have
$$\mathbb E \left[ \mathbb W_p^p \left( \rho^N(X), \mathcal L( X_1  | \beta) \right) \right] = 
O \left(N^{-\frac{1}{2}} \right).$$
\end{lemma}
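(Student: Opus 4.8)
The plan is to exploit the pointwise identity \eqref{eq:was01}, which already carries out the bulk of the reduction: for almost every $\omega$, with $c = \beta(\omega)$, it expresses the conditional Wasserstein distance $\mathbb W_p(\rho^N(X)(\omega), \mathcal L(X_1\mid\beta)(\omega))$ as the $\mathbb W_p$-distance between the empirical measure of the shifted sample points $\gamma_i(\omega) + \sigma\alpha_i(\omega) + c$ and the law $\mathcal L(\gamma_1 + \sigma\alpha_1 + c)$. At this stage the only quantity still carrying the common noise $\beta$ is the deterministic translation by $c$.

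The key step I would perform next is to eliminate this translation using the translation invariance of the $p$-Wasserstein metric on $\mathbb R$. Since the map $x \mapsto x + c$ preserves Euclidean distances, pushing both arguments forward by it leaves $\mathbb W_p$ unchanged; concretely, the map $(x,y)\mapsto (x+c,y+c)$ induces a bijection of coupling sets under which the transport cost $|x-y|^p$ is preserved. Writing $Y_i := \gamma_i + \sigma\alpha_i$, this yields, almost surely,
$$\mathbb W_p^p\left(\rho^N(X), \mathcal L(X_1 \mid \beta)\right) = \mathbb W_p^p\left(\rho^N(Y), \mathcal L(Y_1)\right),$$
where the right-hand side no longer involves $\beta$ at all.

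Finally, I would observe that $\{Y_i\}$ is a one-dimensional i.i.d. sequence (the pairs $(\alpha_i,\gamma_i)$ are i.i.d., hence so are the $Y_i$) and that $Y_1 \in L^q$ with $q>4$ by the standing assumption on Example \ref{e:01}, since $\gamma_1,\alpha_1 \in L^q$. Taking expectations in the displayed identity and invoking Lemma \ref{l:iidrate} with $d=1$ — which gives the rate $O(N^{-1/2})$ for every $p\in[1,2]$, with no logarithmic loss because the dimension is one — delivers the claimed bound.

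I do not expect a genuine obstacle here: the substantive content is already packaged in \eqref{eq:was01}, and the remaining idea is simply the recognition that the common noise enters only as a conditional translation that the Wasserstein metric ignores, thereby decoupling it from the i.i.d. fluctuations. The sole point requiring minor care is the moment transfer $Y_1\in L^q$, which ensures Lemma \ref{l:iidrate} applies and is immediate from the $L^q$-integrability of $\gamma_1$ and $\alpha_1$.
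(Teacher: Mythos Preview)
Your proposal is correct and follows essentially the same route as the paper: starting from \eqref{eq:was01}, you remove the common translation by $c=\beta(\omega)$ via translation invariance of $\mathbb W_p$ (which the paper packages as \eqref{eq:lwas1_2} in Lemma~\ref{l:was1}), reduce to the i.i.d.\ sequence $Y_i=\gamma_i+\sigma\alpha_i$, and conclude by Lemma~\ref{l:iidrate} with $d=1$.
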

\begin{proof}
Originally, $X_i =\gamma_i + \sigma\alpha_i   + \beta$  of Example \ref{e:01} are dependent due to the common term $\beta$.  
We apply \eqref{eq:lwas1_2} in Lemma \ref{l:was1} in Appendix to \eqref{eq:was01} and obtain
\begin{equation*}
\begin{aligned}
\mathbb W_p \left( \rho^N(X)(\omega), \mathcal L(X_1  | \beta) (\omega) \right) 
&=  \mathbb W_p \left( 
\frac 1 N \sum_{i=1}^N 
\delta_{\gamma_i(\omega) + \sigma\alpha_i (\omega) + \beta (\omega)},  
\mathcal L(\gamma_1 + \sigma\alpha_1 + \beta(\omega))
\right) \\ 
& = \mathbb W_p \left( \rho^N(\gamma(\omega) + \sigma\alpha (\omega)),  \mathcal L(\gamma_1 + \sigma\alpha_1 ) \right).
\end{aligned}
\end{equation*}
Now, the convergence of empirical measures is equivalent to the ones of i.i.d. sequence 
$\{
\gamma_i + \sigma\alpha_i: i\in \mathbb N
\}$. The conclusion follows from Lemma \ref{l:iidrate}.
\end{proof}

Next, we present the uniform convergence rate by combining Lemma \ref{l:lln2d}.
\begin{lemma}
\label{l:unif}
In Example \ref{e:01}, we use $X(\sigma)$ to denote $X$ to emphasize its dependence on $\sigma$. Then,
$$\mathbb E 
\left[
\sup_{\sigma\in [0, 1]} 
\mathbb W_p^p \left(\rho^N(X(\sigma)), \mathcal L \left(X_1(\sigma) |\beta \right) \right) 
\right]
= 
\begin{cases}
O \left(N^{-\frac{1}{2}} \ln (N) \right), & \hbox{ if } p=1, \\
O\left(N^{-\frac{1}{2}} \right), & \hbox{ if } 1< p \leq 2.
\end{cases}
$$
\end{lemma}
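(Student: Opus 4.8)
The plan is to reduce the uniform-in-$\sigma$ statement to the convergence rate for the empirical measure of the single two-dimensional i.i.d. sequence $\{(\gamma_i, \alpha_i): i \in \mathbb N\}$, which is exactly the regime covered by Lemma \ref{l:iidrate} with $d = 2$. My starting point would be the pathwise identity already established in the proof of Lemma \ref{l:e01}: for each fixed $\sigma$ one has, $\beta$-a.s.,
$$\mathbb W_p \left( \rho^N(X(\sigma)), \mathcal L(X_1(\sigma) \mid \beta) \right) = \mathbb W_p \left( \rho^N(\gamma + \sigma\alpha), \mathcal L(\gamma_1 + \sigma\alpha_1) \right),$$
so that the common-noise term $\beta$ has been removed and the right-hand side no longer involves conditioning.

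The key observation is that the map $f_\sigma(\gamma, \alpha) = \gamma + \sigma\alpha$ on $\mathbb R^2$ has gradient $(1, \sigma)$, hence $|Df_\sigma|_0 = \max\{1, \sigma\} = 1$ for every $\sigma \in [0,1]$, a bound uniform in $\sigma$. Applying Lemma \ref{l:lln2d} to the two-dimensional i.i.d. sequence $\{(\gamma_i, \alpha_i)\}$ with the map $f_\sigma$ gives, almost surely and for every $\sigma \in [0,1]$,
$$\mathbb W_p \left( \rho^N(\gamma + \sigma\alpha), \mathcal L(\gamma_1 + \sigma\alpha_1) \right) \le |Df_\sigma|_0 \, \mathbb W_p \left( \rho^N(\gamma, \alpha), \mathcal L(\gamma_1, \alpha_1) \right) \le \mathbb W_p \left( \rho^N(\gamma, \alpha), \mathcal L(\gamma_1, \alpha_1) \right).$$
Since the final upper bound no longer depends on $\sigma$, I would take the supremum over $\sigma \in [0,1]$ and then raise to the $p$-th power and take expectations to obtain
$$\mathbb E \left[ \sup_{\sigma \in [0,1]} \mathbb W_p^p \left( \rho^N(X(\sigma)), \mathcal L(X_1(\sigma) \mid \beta) \right) \right] \le \mathbb E \left[ \mathbb W_p^p \left( \rho^N(\gamma, \alpha), \mathcal L(\gamma_1, \alpha_1) \right) \right].$$

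It then remains to invoke Lemma \ref{l:iidrate} for the two-dimensional i.i.d. sequence $\{(\gamma_i, \alpha_i)\}$, whose $q$-th moment is finite with $q > 4$ by the standing integrability in Example \ref{e:01}. For $1 < p \le 2$ the lemma yields the rate $O(N^{-1/2})$, while for $p = 1$ the case $d = 2$ produces the extra logarithmic factor $O(N^{-1/2}\ln N)$; this is precisely where the stated dichotomy originates, reflecting the passage from the one-dimensional estimate used in Lemma \ref{l:e01} to a genuinely two-dimensional one forced by uniformity in $\sigma$.

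I expect the main obstacle to be technical rather than conceptual. First, Lemma \ref{l:lln2d} is stated for $f \in C_b^1$, whereas $f_\sigma$ is linear and unbounded; I would point out that the proof of Lemma \ref{l:lln2d} (through Lemma \ref{l:joint}) uses only the Lipschitz bound $|f(x) - f(y)| \le |Df|_0 |x - y|$, which the linear map satisfies, so the estimate carries over verbatim. Second, the pathwise identity and the bound above hold a.s. for each fixed $\sigma$, and interchanging ``almost surely'' with the supremum over the uncountable set $[0,1]$ requires a short continuity argument: since $\sigma \mapsto \gamma_i(\omega) + \sigma\alpha_i(\omega)$ is continuous, so is $\sigma \mapsto \mathbb W_p(\rho^N(\gamma + \sigma\alpha), \mathcal L(\gamma_1 + \sigma\alpha_1))$, which lets the supremum over $[0,1]$ be replaced by one over the rationals, where a countable intersection of full-measure events suffices.
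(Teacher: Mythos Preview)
Your proposal is correct and follows essentially the same route as the paper: remove $\beta$ via the translation-invariance identity, apply Lemma \ref{l:lln2d} with $f_\sigma(x,y)=x+\sigma y$ to bound the $\sigma$-dependent Wasserstein distance by the $\sigma$-independent two-dimensional one, and then invoke Lemma \ref{l:iidrate} with $d=2$. Your additional remarks on the $C_b^1$ hypothesis and the measurability of the uncountable supremum add welcome rigor that the paper's proof omits.
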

\begin{proof}
Note that, by  \eqref{eq:lwas1_2} in Lemma \ref{l:was1} in Appendix,
$$\mathbb W_p^p \left(\rho^N(X(\sigma)), \mathcal L \left(X_1(\sigma) |\beta \right) \right) = 
\mathbb W_p^p \left(\rho^N(\gamma_i + \sigma \alpha_i), \mathcal L \left(\gamma_1 + \sigma \alpha_1  \right) \right).
$$
Next, applying Lemma \ref{l:lln2d} with $f(x, y) = x + \sigma y$, we obtain
\begin{equation*}
\begin{aligned}
\sup_{\sigma \in [0, 1]} \mathbb W_p^p \left(\rho^N(\gamma_i + \sigma \alpha_i), \mathcal L \left(\gamma_1 + \sigma \alpha_1  \right) \right)
& \leq 
\sup_{\sigma \in [0, 1]} \max\{1, \sigma^p\} \mathbb W_p^p \left(\rho^N((\gamma,  \alpha)), \mathcal L \left((\gamma_1, \alpha_1)  \right) \right)
\\ 
& =  \mathbb W_p^p \left(\rho^N((\gamma,  \alpha)), \mathcal L \left((\gamma_1, \alpha_1)  \right) \right).  
\end{aligned}
\end{equation*}
At last, using Lemma \ref{l:iidrate} for the $2$-dimensional i.i.d. sequence $\{(\gamma_i, \alpha_i): i\in \mathbb N\}$, we obtain the desired conclusion.
\end{proof}

\subsection{Generalization of the convergence to triangular arrays}
Unfortunately, $(\hat X_{1t}^{(N)}, \hat X_{2t}^{(N)}, \ldots, \hat X_{Nt}^{(N)})$ of the $N$-player's game does not have a clean structure with a common noise term $\beta$ given in Example \ref{e:01}. Therefore, we need  a generalization of the convergence result in Example \ref{e:01} to a triangular array. To proceed, we provide the following lemma.

\begin{lemma}
\label{l:triangle}
Let $\sigma>0$, $q>4$, and
$$X_i^N (\sigma) = \gamma_i^N + \sigma \alpha_i^N + \Delta_i^N (\sigma)+ \beta, 
\hbox{ and } \hat X (\sigma) = \hat \gamma + \sigma \hat \alpha  + \beta,$$
where 
\begin{itemize}
\item $(\gamma^N, \alpha^N) = \{ (\gamma_i^N, \alpha_i^N): i \in \mathbb N \}$ 
is a sequence of $2$-dimensional i.i.d. random variables with distribution identical to 
$\mathcal L((\hat \gamma, \hat\alpha))$ with $(\hat \gamma, \hat \alpha) \in L^q$ for some $q > 4$,
\item $\beta \in L^q$ is independent to the random variables $(\gamma_i^N, \alpha_i^N, \hat \gamma, \hat\alpha)$,
\item $\displaystyle \max_{i=1, 2, \dots, N} \mathbb E \left[ \sup_{\sigma\in [0,1]} |\Delta_i^N (\sigma) |^2 \right]=O (N^{-1})$.
\end{itemize}
Let $\rho^N(X^N)$ be the empirical measure given by
$$\rho^N(X^N) = \frac 1 N \sum_{i=1}^N \delta_{X_i^N}.$$
Then, we have the following three results: For $p\in [1,2]$,
\begin{equation}
\label{eq:ltri1}
\mathbb W_p \left(\mathcal L \left(X_1^N (\sigma) \right), \mathcal L \left(\hat X(\sigma) \right) \right) = O \left(N^{-\frac{1}{2}} \right),
\end{equation}
\begin{equation}
\label{eq:ltri2}
\sup_{\sigma\in [0,1]} 
\mathbb E\left[ \mathbb W_p^p \left( \rho^N \left(X^N (\sigma) \right), 
\mathcal L \left(\left. \hat X(\sigma)  \right \vert \beta \right) \right) \right] = O \left( N^{-\frac{1}{2}} \right),
\end{equation}
and
\begin{equation}
\label{eq:ltri3}
\mathbb E\left[ \sup_{\sigma\in [0,1]}  \mathbb W_p^p \left( \rho^N \left(X^N (\sigma) \right), 
\mathcal L \left(\left. \hat X(\sigma)  \right \vert \beta \right) \right) \right] =
\begin{cases}
O \left(N^{-\frac{1}{2}} \ln (N) \right), & \hbox{ if } p=1, \\
O\left(N^{-\frac{1}{2}} \right), & \hbox{ if } p > 1.
\end{cases}
\end{equation}
\end{lemma}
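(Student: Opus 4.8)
The plan is to reduce every claim to the clean ``common noise'' case already treated in Example~\ref{e:01} by peeling off the perturbation $\Delta_i^N(\sigma)$ with the triangle inequality for $\mathbb W_p$ together with an explicit diagonal coupling. Write $Y_i^N(\sigma) = \gamma_i^N + \sigma\alpha_i^N + \beta$, so that $X_i^N(\sigma) = Y_i^N(\sigma) + \Delta_i^N(\sigma)$ and the sequence $Y^N(\sigma)$ is precisely a sequence with common noise $\beta$ in the sense of Definition~\ref{d:com1} and Example~\ref{e:01}. Since $(\gamma_1^N,\alpha_1^N)$ has the same law as $(\hat\gamma,\hat\alpha)$ and $\beta$ is independent of both, one has $\mathcal L(Y_1^N(\sigma)) = \mathcal L(\hat X(\sigma))$ and, conditionally, $\mathcal L(Y_1^N(\sigma)\mid\beta) = \mathcal L(\hat X(\sigma)\mid\beta)$. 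Hence Lemma~\ref{l:e01} and Lemma~\ref{l:unif} already control the $Y^N$-parts, and the only genuinely new work is to show that $\Delta^N$ contributes at the harmless order $N^{-p/2}$.

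For \eqref{eq:ltri1} I would use $\mathbb W_p(\mathcal L(X_1^N),\mathcal L(\hat X)) = \mathbb W_p(\mathcal L(X_1^N),\mathcal L(Y_1^N))$, and then the coupling that pairs $X_1^N$ with $Y_1^N$ on the same space gives $\mathbb W_p^p(\mathcal L(X_1^N),\mathcal L(Y_1^N)) \le \mathbb E[|\Delta_1^N(\sigma)|^p]$. Because $p\le 2$, Jensen's inequality yields $\mathbb E[|\Delta_1^N(\sigma)|^p] \le (\mathbb E[\sup_{\sigma}|\Delta_1^N(\sigma)|^2])^{p/2} = O(N^{-p/2})$, hence $\mathbb W_p = O(N^{-1/2})$.

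For the empirical-measure statements \eqref{eq:ltri2} and \eqref{eq:ltri3} I would split
$$\mathbb W_p(\rho^N(X^N(\sigma)), \mathcal L(\hat X(\sigma)\mid\beta)) \le \mathbb W_p(\rho^N(X^N(\sigma)), \rho^N(Y^N(\sigma))) + \mathbb W_p(\rho^N(Y^N(\sigma)), \mathcal L(\hat X(\sigma)\mid\beta)).$$
The key observation is that the two empirical measures share the same index set, so pairing $\delta_{X_i^N}$ with $\delta_{Y_i^N}$ is an admissible transport plan and $\mathbb W_p^p(\rho^N(X^N), \rho^N(Y^N)) \le \frac1N\sum_{i=1}^N|\Delta_i^N(\sigma)|^p$. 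After raising to the $p$-th power via $(a+b)^p\le 2^{p-1}(a^p+b^p)$ and taking expectations, the perturbation term is bounded --- for \eqref{eq:ltri2} pointwise in $\sigma$, and for \eqref{eq:ltri3} after using $\sup_\sigma\frac1N\sum_i|\Delta_i^N(\sigma)|^p \le \frac1N\sum_i\sup_\sigma|\Delta_i^N(\sigma)|^p$ --- by $\max_i \mathbb E[\sup_\sigma|\Delta_i^N(\sigma)|^p] = O(N^{-p/2})$, again by Jensen. The clean term is then handled by Lemma~\ref{l:e01} for \eqref{eq:ltri2} and by Lemma~\ref{l:unif} for \eqref{eq:ltri3}. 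Since $N^{-p/2}\le N^{-1/2}$ for $p\ge 1$, the clean term dominates, producing $O(N^{-1/2})$ for \eqref{eq:ltri2} and the stated rates for \eqref{eq:ltri3}.

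The main point to get right, and the only delicate bookkeeping, is uniformity in $\sigma$ together with the appearance of the logarithm solely at $p=1$ in \eqref{eq:ltri3}. For \eqref{eq:ltri2} the clean term is estimated for each fixed $\sigma$ by reducing, as in the proof of Lemma~\ref{l:e01}, to the \emph{one-dimensional} i.i.d. sequence $\{\gamma_i^N + \sigma\alpha_i^N\}$; because $(\hat\gamma,\hat\alpha)\in L^q$ with $q>4$, the moment bound feeding Lemma~\ref{l:iidrate} is uniform over $\sigma\in[0,1]$, and the $d=1$ case of Lemma~\ref{l:iidrate} gives $O(N^{-1/2})$ with no logarithm, so that $\sup_\sigma$ may be placed outside the expectation. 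In \eqref{eq:ltri3}, by contrast, interchanging $\sup_\sigma$ with the expectation forces the use of Lemma~\ref{l:unif}, whose proof passes through the \emph{two-dimensional} i.i.d. sequence $\{(\gamma_i^N,\alpha_i^N)\}$; the $d=2$, $p=1$ case of Lemma~\ref{l:iidrate} then produces the factor $\ln N$, which is the sole source of the logarithmic loss. This contrast between the one- and two-dimensional reductions is exactly what separates the sharp rate in \eqref{eq:ltri2} from the rate in \eqref{eq:ltri3} at $p=1$.
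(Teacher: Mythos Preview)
Your proposal is correct and follows essentially the same route as the paper: introduce $Y_i^N(\sigma)=\gamma_i^N+\sigma\alpha_i^N+\beta$, control $\mathbb W_p^p(\rho^N(X^N),\rho^N(Y^N))$ by the diagonal coupling $\frac1N\sum_i|\Delta_i^N|^p$, and handle the clean term via Lemma~\ref{l:e01} for \eqref{eq:ltri2} and Lemma~\ref{l:unif} for \eqref{eq:ltri3}. Your added discussion of why the $\sup_\sigma$ in \eqref{eq:ltri2} costs nothing (uniform $L^q$ moments feeding the one-dimensional case of Lemma~\ref{l:iidrate}) while \eqref{eq:ltri3} picks up $\ln N$ at $p=1$ (passage to the two-dimensional sequence in Lemma~\ref{l:unif}) is a helpful clarification that the paper leaves implicit.
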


\begin{proof}
We will omit the dependence of $\sigma$  if there is no confusion, for instance, we use $X$ in lieu of $X(\sigma)$.
Since $\mathcal L(\hat X) = \mathcal L(X_1^N - \Delta_1^N)$, 
the first result \eqref{eq:ltri1} directly follows from 
$$
 \mathbb W_p^p \left(\mathcal L \left( X_1^N \right), \mathcal L \left(\hat X \right) \right) 
 \le 
\mathbb E \left[ \left\vert \Delta_1^N \right\vert^p  \right]
\le \left( \mathbb E \left[ \left\vert \Delta_1^N \right\vert^2 \right] \right)^{\frac{p}{2}} = O \left(N^{-\frac{p}{2}} \right).
 $$
Next, we set $Y_i^N(\sigma) = \gamma_i^N + \sigma \alpha_i^N + \beta$. 
By the definition of empirical measures, we have
\begin{equation}
\label{eq:no1}
\mathbb W_p^p \left( \rho^N \left(X^N \right), \rho^N \left(Y^N \right) \right) \le \frac{1}{N} \sum_{i=1}^N  \left\vert X_i^N - Y_i^N \right\vert^p 
 = \frac{1}{N} \sum_{i=1}^N  \left\vert \Delta_i^N \right\vert^p.
\end{equation}
From the third condition on $\Delta_i^N$, we obtain
$$
\mathbb E \left[ \mathbb W_p^p \left( \rho^N \left(X^N \right), \rho^N \left( Y^N \right) \right) \right] = O \left(N^{-\frac{p}{2}} \right).
$$
By Lemma \ref{l:e01}, we also have
$$
\mathbb E \left[ \mathbb W_p^p \left( \rho^N \left( Y^N \right), \mathcal L \left( \left. \hat X \right\vert \beta \right) \right) \right] = O \left(N^{-\frac{1}{2}} \right).
$$
In the end, \eqref{eq:ltri2} follows from the triangle inequality together with the fact that $p \geq 1$.
Finally, for the proof of \eqref{eq:ltri3}, we first use \eqref{eq:no1} to write
$$
\mathbb W_p^p \left( \rho^N \left(X^N (\sigma) \right), 
\mathcal L \left(\left. \hat X(\sigma)  \right \vert \beta \right) \right) \leq 
2^{p-1} \left(
\mathbb W_p^p \left( \rho^N \left(Y^N (\sigma) \right), 
\mathcal L \left(\left. \hat X(\sigma)  \right \vert \beta \right) \right) +  \frac{1}{N} \sum_{i=1}^N  \left\vert \Delta_i^N(\sigma) \right\vert^p
\right).
$$
Applying Lemma \ref{l:unif} and the third condition on $\Delta_i^N(\sigma)$, we can conclude  \eqref{eq:ltri3}. 
\end{proof}

\subsection{Proof of Theorem \ref{t:main1}} \label{s:pthm}
For simplicity, let us introduce the following notations:
$$\mathcal E_t(a) = \exp \left \{ \int_0^t a(s) ds \right \},  \quad \mathcal E_t(a, M) = \int_0^t \mathcal E_s(a) dM_s$$
for a deterministic function $a(\cdot)$ and a martingale $M = \{M_t\}_{t \geq 0}$. With these notations, one can write the solution to the Ornstein–Uhlenbeck process
$$d X_t = - a_t X_t dt + dM_t$$
for a determinant function $a$ in the form of
\begin{equation}
\label{eq:ou1}
\mathcal E_t(a) X_t = X_0 + \mathcal E_t(a, M).
\end{equation}

For MFG equilibrium, we define
$$\tilde X_t = \hat X_t - \hat \mu_t.$$
According to \eqref{eq:Xhat03p} in Proposition \ref{p:mfg}, $\tilde X$ satisfies the following equation:
$$\tilde X_t = \tilde X_0 - \int_0^t 2 a_s \tilde X_s ds + W_t.$$
Next, we express the solution of the above SDE in the form of
$$\tilde Y_t := \mathcal E_t(2a) \tilde X_t = \tilde X_0 + \mathcal E_t(2a, W).$$
Note that $\tilde Y$ and $\hat \mu$ are independent. Therefore, $\hat X$ admits a decomposition of 
two independent processes as
$$\hat X_t = \tilde X_t + \hat \mu_t.$$
Furthermore, we have
$$\hat Y_t := \mathcal E_t(2a) \hat X_t = \tilde X_0 +  \mathcal E_t(2a, W) +  \mathcal E_t(2a) \left(\hat \mu_0 + \tilde W_t \right). $$

In the $N$-player game, we define the following quantities:
$$\bar X^{(N)}_t = \frac{1}{N} \sum_{i=1}^N \hat X^{(N)}_{it}, \quad  \bar W^{(N)}_t = \frac{1}{N} \sum_{i=1}^N W^{(N)}_{it},$$
and
$$\tilde X^{(N)}_{it} = \hat X^{(N)}_{it} - \bar X^{(N)}_t.$$
It is worth noting that, by Proposition \ref{p:ABCexist}, we have
\begin{equation*}
\hat X_{it}^{(N)}  = \hat X_{i0}^{(N)} - \int_0^t 2 \frac{N}{N-1} a^N(s) \left(\hat X_{is}^{(N)} - \frac{1}{N} \sum_{j = 1}^N  \hat X_{js}^{(N)} \right) ds + W_{it}^{(N)} + \tilde{W}_t
\end{equation*}
for all $i = 1, 2, \dots, N$, then the mean-field term satisfies
$$\bar X^{(N)}_t = \bar X^{(N)}_0 + \bar W^{(N)}_t  + \tilde W_t$$
and the $i$-th player's path deviated from the mean-field path can be rewritten by
$$
\tilde X^{(N)}_{it} = \tilde X^{(N)}_{i0} - \int_0^t 2  \hat a^N(s) \tilde X^{(N)}_{is} ds + W_{it}^{(N)} - \bar W_t^{(N)},
$$
where 
$$\hat a^N = \frac{N}{N-1} a^N.$$
Next, we introduce
$$\hat Y^{(N)}_{it} = \mathcal E_t \left( 2 \hat a^N \right) \hat X^{(N)}_{it}, \quad 
\tilde Y^{(N)}_{it} = \mathcal E_t \left(2 \hat a^N \right) \tilde X^{(N)}_{it}, \quad
\bar Y^{(N)}_{t} = \mathcal E_t \left(2 \hat a^N \right) \bar X^{(N)}_{t}.$$
Consequently, we obtain the following relationships:
$$\tilde Y^{(N)}_{it} = \tilde X^{(N)}_{i0} + \mathcal E_t \left(2 \hat a^N,  W^{(N)}_{i} - \bar W^{(N)} \right),$$
$$
\bar Y^{(N)}_{t} = \mathcal E_t \left(2 \hat a^N \right) \left(\bar W^{(N)}_t+ \tilde W_t + \bar X^{(N)}_0 \right),
$$
and 
$$\hat Y^{(N)}_{it} = \bar Y^{(N)}_{it} +\tilde Y^{(N)}_{it}.$$

To compare the process $\hat Y^{(N)}_{it}$ with the target process 
\begin{equation}
\label{eq:Y1}
\begin{aligned}
\hat Y_t &= \tilde X_0 +  \mathcal E_t \left(2a, W \right) +  \mathcal E_t(2a) \left(\hat \mu_0 + \tilde W_t \right) \\ 
& = \tilde X_0 +  \sigma_t Z_t + \mathcal E_t(2a) \left(\hat \mu_0 + \tilde W_t \right),
\end{aligned}
\end{equation}
where 
$$\sigma_t = \left( \int_0^t \mathcal E_s(4a) ds\right)^{1/2},$$
and
$$Z_t = \sigma_t^{-1} \mathcal E_t \left(2a, W \right) \sim \mathcal N(0,1),$$
we write $\hat Y^{(N)}_{it}$ by
\begin{equation}
\label{eq:YN1}
\begin{aligned}
\hat Y^{(N)}_{it} &= \tilde X^{(N)}_{i0} + \mathcal E_t \left(2 a,  W^{(N)}_{i} \right) + \Delta^{(N)}_{it} +  \mathcal E_t(2a) \left(\hat \mu_0 + \tilde W_t \right) \\
&= \tilde X^{(N)}_{i0} + \sigma_t Z_{it}^{(N)}+ \Delta^{(N)}_{it} +  \mathcal E_t(2a) \left(\hat \mu_0 + \tilde W_t \right),
\end{aligned}
\end{equation}
where 
$$Z_{it}^{(N)} = \sigma_t^{-1} \mathcal E_t \left(2a, W_i^{(N)} \right) \sim \mathcal N(0,1), $$
and
\begin{equation}
\label{eq:delta1}
\begin{aligned}
\Delta^{(N)}_{it}  &= \left( \mathcal E_t \left(2 \hat a^N, W^{(N)}_{i} \right)  -  \mathcal E_t \left(2 a, W^{(N)}_{i} \right) \right) \\
& \hspace{0.3in} - \mathcal E_t \left(2 \hat a^N, \bar W^{(N)} \right) \\
& \hspace{0.3in} + \left(\mathcal E_t \left( 2 \hat a^N \right) - \mathcal E_t(2 a) \right) \left(\hat \mu_0 + \tilde W_t \right) \\
& \hspace{0.3in} + \mathcal E_t \left(2 \hat a^N \right) \left( \bar X_0^{(N)} - \hat \mu_0 + \bar W_t^{(N)} \right) \\
& := I^{(N)}_{it}+ II^{(N)}_{t} + III^{(N)}_{t} + IV^{(N)}_{t}.
\end{aligned}
\end{equation}
To apply Lemma \ref{l:triangle} to the processes of \eqref{eq:YN1} and \eqref{eq:Y1}, we only need to show the second moment on $\sup_{t\in [0, T]}\Delta^{(N)}_{it}$ is $O(N^{-1})$ for each $i = 1, 2, \dots, N$.
In the following analysis, we will utilize the explicit solution of the ODE:
\begin{itemize}
\item
Let $c, d>0$ be two constants. The solution of 
$$v'(t) - c^2 v^2(t) + d^2 = 0, \quad v(T) = 0$$
is
\begin{equation}
\label{eq:odes1}
v(t) = \frac{d}{c} \cdot \frac{1 - e^{2dc(t-T)}}{1 + e^{2dc(t-T)}}.
\end{equation}
\end{itemize}
We will employ this solution to derive the second-moment estimations of $\sup_{t \in [0, T]} \Delta^{(N)}_{it}$.
\begin{enumerate}
\item From
\eqref{eq:odes1}, we have an estimation of
\begin{equation}
\left\vert a^N (t)- a(t) \right\vert =  \frac{k|T-t|}{N} + o\left({N^{-1}} \right).
\end{equation}
Therefore, we have
\begin{equation}
\label{eq:est1}
\left\vert \mathcal E_t(2 \hat a ^N) - \mathcal E_t(2 a) \right\vert = \frac{2t(T-t)}{N} + o \left(N^{-1} \right)
\end{equation}
and thus by Burkholder-Davis-Gundy (BDG) inequality
\begin{equation*}
\begin{aligned}
\mathbb E \left[ \sup_{t\in [0, T]} \left( I^{(N)}_{it} \right)^2 \right] & = \mathbb E 
\left[ \sup_{t\in [0, T]} \left( \int_0^t \left(\mathcal E_s \left(2 \hat a^N \right) - \mathcal E_s \left( 2 a \right) \right) dW^{(N)}_{is} \right)^2 \right] \\
& \leq C \mathbb E \left[ \left( \int_0^T \left(\mathcal E_s \left(2 \hat a^N \right) - \mathcal E_s \left( 2 a \right) \right) dW^{(N)}_{is} \right)^2  \right] \ \hbox{ for some constant } C>0 \\
& = C \int_0^{T} \left(\mathcal E_s \left(2 \hat a^N \right) - \mathcal E_s \left( 2 a \right) \right)^2 ds \\
& = O \left(N^{-2} \right).
\end{aligned}
\end{equation*}

\item Since $\hat a^N$ is uniformly bounded by $\sqrt{k/2}$, 
$II_t^{(N)}$ is a martingale with its quadratic variance 
$$[II^{(N)}]_T = \frac 1 N \int_0^T \mathcal E_s (4 \hat a^N) ds = O \left(N^{-1} \right).$$
So, we have
$$\mathbb E \left[ \sup_{t\in [0, T]} \left( II^{(N)}_{t} \right)^2 \right] = O \left( N^{-1} \right). $$
\item
From the estimation \eqref{eq:est1}, we also have
$$\mathbb E \left[ \sup_{t\in [0, T]} \left( III^{(N)}_{t} \right)^2 \right] = O \left(N^{-2} \right). $$
\item
By the assumption of i.i.d. initial states, we have
$$
\mathbb E \left[ \sup_{t\in [0, T]}\left( IV^{(N)}_{t} \right)^2 \right] =
\mathcal E_T \left(4 \hat a^N \right) 
\left(Var\left(\bar X_{0}^{(N)} \right) + 
\mathbb E \left[ \sup_{t\in [0, T]} \left( \bar W_t^{(N)} \right)^2 \right] \right) = O \left(N^{-1} \right). $$
\end{enumerate}
As a result, we have the following expression:
\begin{equation}
\label{eq:est2}
\mathbb E \left[ \sup_{t\in [0, T]} \left(\Delta^{(N)}_{it} \right)^2 \right]= O \left( N^{-1} \right), \quad  \forall i = 1, 2, \ldots, N.
\end{equation}
By combining equations \eqref{eq:Y1}, \eqref{eq:YN1}, and \eqref{eq:est2}, we can conclude Theorem \ref{t:main1} by applying Lemma \ref{l:triangle}.


\section{Proposition \ref{p:mfg}: Derivation of the MFG path}
\label{s:section3}
	
This section is dedicated to proving Proposition \ref{p:mfg}, which provides insights into the MFG solution. 
To proceed, in Subsection \ref{s:overview}, we begin by reformulating the MFG problem, assuming a Markovian structure for the equilibrium.
Then, in Subsection \ref{s:Riccati system}, we solve the underlying control problem and derive the corresponding Riccati system.
Finally, in Subsection \ref{s:proof_of_main_result}, we examine the fixed-point condition of the MFG problem, leading to the conclusion.

\subsection{Reformulation}
\label{s:overview}
To determine the equilibrium measure, as defined in Definition \ref{d:neN}, one needs to explore the infinite-dimensional space of random measure flows $m:(0, T] \times \Omega \to \mathcal P_2(\mathbb R)$ until a measure flow satisfies the fixed-point condition $m_t = \mathcal L (\hat X_t | \tilde{\mathcal F}_t)$ for all $t\in(0,T]$, as illustrated in Figure \ref{fig:MFG1}.
		
The first observation is that the cost function $F$ in \eqref{eq:running cost} is only dependent on the measure $m$ through the first two moments with the quadratic cost structure, which is given by
\begin{equation*}
    F(x, m) = k (x^2 - 2x [m]_1 + [m]_2). 
\end{equation*}
Consequently, the underlying stochastic control problem for MFG can be entirely determined by the input given by the $\mathbb R^2$ valued random processes $\mu_t = [m_t]_1$ and $\nu_t = [m_t]_2$, which implies that the fixed point condition can be effectively reduced to merely checking two conditions:
$$\mu_t = \mathbb E \left[\left.\hat X_t \right \rvert \tilde{\mathcal F}_t \right], \ \nu_t = \mathbb E \left[ \left. \hat X_t^2 \right \rvert \tilde{\mathcal F}_t \right].$$
This observation effectively reduces our search from the space of random measure-valued processes $m: (0, T]\times \Omega \mapsto \mathcal P_2(\mathbb R)$ to the space of $\mathbb R^2$-valued random processes $(\mu, \nu): (0, T]\times \Omega \mapsto \mathbb R^2$. 

It is important to note that if the underlying MFG does not involve common noise, the aforementioned observation is adequate to transform the original infinite-dimensional MFG into a finite-dimensional system. In this case, the moment processes $(\mu, \nu)$ become deterministic mappings $[0, T] \to \mathbb{R}^2$. However, the following example demonstrates that this is not applicable to MFG with common noise, which presents a significant drawback in characterizing LQG-MFG using a finite-dimensional system.

\begin{example}\label{exm:1}
To illustrate this point, let's consider the following uncontrolled mean field dynamics:
Let the mean field term $\mu_t := \mathbb{E} [\hat X_t | \tilde{\mathcal{F}}_t]$, where the underlying dynamic is given by
	$$d \hat X_{t} = - \mu_t \tilde{W}_t dt + dW_{t} + d \tilde{W}_t, \quad \hat X_0 = X_0.$$
Here are two key observations:
	\begin{itemize}
		\item $\mu_t$ is path dependent on entire path of $\tilde{W}$, i.e.,
		$$\mu_t = \mu_0 e^{- \int_0^t \tilde{W}_s ds} + e^{- \int_0^t \tilde{W}_s ds} \int_0^t e^{\int_0^s \tilde{W}_r dr} d\tilde{W}_s.$$
		This implies that  
		the $(t, \tilde{W}) \mapsto \mu_t$ is a function on an infinite dimensional domain.
		\item  $\mu_t $ is {\it Markovian}, i.e.,
		$$ d \mu_t = -  \mu_t \tilde{W}_t dt + d \tilde{W}_t.$$
		It is possible to 
		express the $\mu_t$ via a SDE with finite-dimensional coefficient functions of  $(t, \mu_t)$.
    \end{itemize}
\end{example}

To make the previous idea more concrete, we propose the assumption of a Markovian structure for the first and second moments of the MFG equilibrium. In other words, we restrict our search for equilibrium to a smaller space $\mathcal{M}$ of measure flows that capture the Markovian structure of the first and second moments.
\begin{defi}
\label{d:markov}
The space $\mathcal M$ is the collection of all $\tilde{\mathcal F}_t$-adapted measure flows $m: [0,T]\times \Omega \mapsto \mathcal P_2(\mathbb R)$, whose first moment  $[m_t]_1 := \mu_t$ 
and second moment $[ m_t]_2 := \nu_t$ satisfy a system of SDE
	\begin{equation}
		\label{eq:mu_sigma}
		\begin{aligned}
			\displaystyle &\mu_t = \mu_0 + \int_0^t \left(w_1(s)\mu_s +w_2(s)\right) ds + \tilde{W}_t, \\
			\displaystyle & \nu_t = \nu_0 + \int_0^t \left(w_3(s)\mu_s+w_4(s)\nu_s +w_5(s)\mu_s^2 + w_6(s)\right) ds + 2 \int_0^t \mu_s  d \tilde{W}_s, 
		\end{aligned}
	\end{equation}
for some 	smooth deterministic functions $(w_i: i = 1, 2, \ldots, 6)$ for all $t \in [0, T]$.
\end{defi}

\begin{figure}[h]
	\centering
	\includegraphics[width= .6\textwidth]{./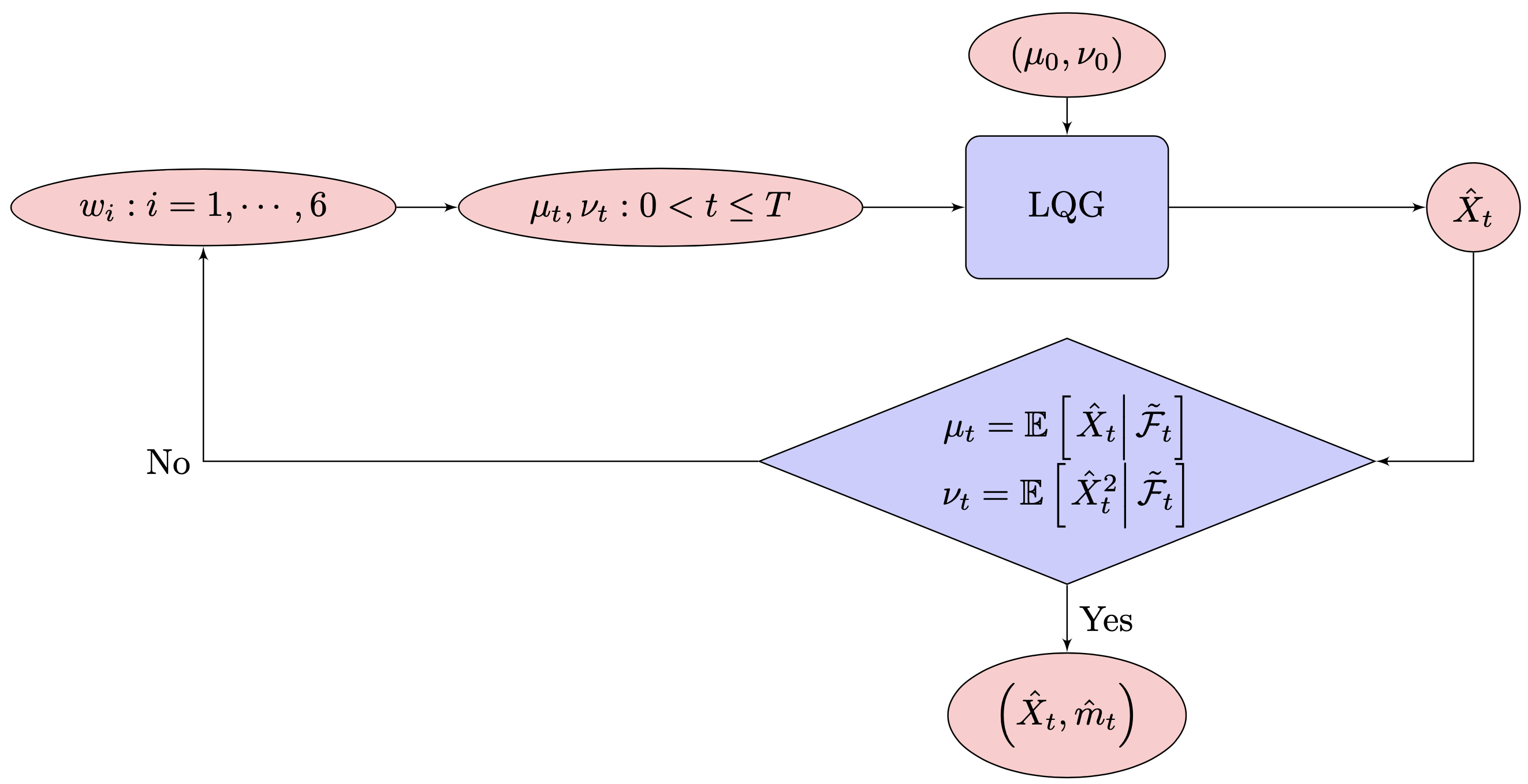}
	\caption{ Equivalent MFG diagram with $\mu_0 = [m_0]_1$ and $\nu_0 = [m_0]_2$.}
	\label{fig:MFG2}
\end{figure}
	
The MFG problem originally given by Definition \ref{d:ne} can be recast as the following combination of stochastic control problem and fixed point condition:
\begin{itemize}
	\item RLQG(Revised LQG): 
	
	Given smooth functions $w = (w_i: i = 1, 2, \ldots, 6)$, we want to find the value function  
	$\bar V = \bar V[w]: [0, T] \times \mathbb R^3 \to \mathbb R$ and optimal path $(\hat X, \hat \mu, \hat \nu)[w]$ from the following control problem:
	\begin{equation*}
	\bar V(t, x, \bar \mu, \bar \nu) = \inf_{\alpha\in \mathcal A}
	\mathbb E \left[ \left.  \int_t^T \left(\frac{1}{2} \alpha_s^2 + \bar F(X_s, \mu_{s}, \nu_s) \right) \ ds \right \rvert X_t = x, \mu_t = \bar \mu, \nu_t = \bar \nu \right]
	\end{equation*}
	with the underlying process  $X$ of \eqref{eq:X} and $(\mu, \nu)$ of \eqref{eq:mu_sigma} and with the cost functions: $\bar F: \mathbb R^3 \mapsto \mathbb R$ given by
	$$ \bar F (x, \bar \mu, \bar \nu) = k (x^2 - 2x \bar \mu + \bar \nu),$$
	where $\bar{\mu}, \bar{\nu}$ are scalars, while $\mu, \nu$ are used as processes.
	\item RFP(Revised fixed point condition):
	
	 Determine $w$ satisfying the following fixed point condition:
\begin{equation}
\label{eq:fpc}
    \hat \mu_s = \mathbb E \left[\left.\hat{X}_s \right\vert \tilde{\mathcal F}_s \right] \hbox{ and } 
    \hat \nu_s = \mathbb E \left[\left.\hat{X}_s^2 \right\vert \tilde{\mathcal F}_s \right], \quad \forall s \in [0, T].
\end{equation}
The equilibrium measure is then $\mathcal N(\hat \mu_t, \hat \nu_t - \hat \mu_t^2)$.

\end{itemize}

\begin{remark}
It is important to highlight that the Markovian structure for the first and second moments of the MFG equilibrium in this manuscript differs significantly from that presented in \cite{JLSY22}. In \cite{JLSY22}, the processes $\mu_t$ and $\nu_t$ are pairs of processes with finite variation, while in our case, they are quadratic variation processes.

Specifically, in \cite{JLSY22}, the coefficient functions depend on the common noise $Y$, whereas in \eqref{eq:mu_sigma}, the coefficient functions $(w_i: i = 1, 2, \dots, 6)$ are independent of the common noise $\tilde{W}$. Instead, the first and second moments of the MFG equilibrium are only influenced by the common noise through an additive term.
\end{remark}
	
\subsection{The generic player's control with a given population measure} 
\label{s:Riccati system}
This section is devoted to the control problem RLQG parameterized by $w$.

\subsubsection{HJB equation}

To simplify the notation, let's denote each function $w_i(t)$ as $w_i$ for $i \in \{1, 2, \ldots, 6\}$. Assuming sufficient regularity conditions, and according to the dynamic programming principle (refer to \cite{Pham09} for more details), the value function $\bar V$ defined in the RLQG problem can be obtained as a solution $v$ of the following Hamilton-Jacobi-Bellman (HJB) equation
\begin{equation*}
\begin{aligned}
\begin{cases}
\vspace{4pt} \displaystyle \partial_t v + \inf_{a \in \mathbb R} \left(a \partial_x v + \frac{1}{2} a^2 \right) + \left(w_{1} \bar \mu+w_{2} \right) \partial_{\bar \mu} v + \left(w_{3} \bar \mu + w_{4} \bar \nu + w_{5} \bar \mu^2 + w_{6} \right) \partial_{\bar \nu} v + \partial_{xx} v + \frac{1}{2} \partial_{\bar \mu \bar \mu} v  \\
\vspace{4pt} \displaystyle \hspace{1in}  + \partial_{x \bar \mu} v + 2\bar \mu^2 \partial_{\bar \nu \bar \nu} v + 2 \bar \mu \partial_{\bar \mu \bar \nu} v + 2 \bar \mu \partial_{x \bar \nu} v + k(x^2 - 2 \bar \mu x + \bar \nu) = 0,  \\
\displaystyle v(T, x, \mu_T,\nu_T) = 0.
\end{cases}
\end{aligned}
\end{equation*}
Therefore, the optimal control has to admit the feedback form of
\begin{equation}
\label{eq:optimal control}
    \hat \alpha(t) = - \partial_x v \left(t, \hat{X_t}, \mu_t,  \nu_t \right),
\end{equation}
and then the HJB equation can be reduced to
\begin{equation}
\label{eq:hjb1}
\begin{aligned}
\begin{cases}
\vspace{4pt} \displaystyle \partial_t v - \frac{1}{2}(\partial_x v)^2 + \left(w_{1} \bar \mu+w_{2} \right) \partial_{\bar \mu} v + \left(w_{3} \bar \mu + w_{4} \bar \nu + w_{5} \bar \mu^2 + w_{6} \right) \partial_{\bar \nu} v + \partial_{xx} v + \frac{1}{2} \partial_{\bar \mu \bar \mu} v  \\
\vspace{4pt} \displaystyle \hspace{1in}  + \partial_{x \bar \mu} v + 2\bar \mu^2 \partial_{\bar \nu \bar \nu} v + 2 \bar \mu \partial_{\bar \mu \bar \nu} v + 2 \bar \mu \partial_{x \bar \nu} v + k(x^2 - 2 \bar \mu x + \bar \nu) = 0,  \\
\displaystyle v(T, x, \mu_T,\nu_T) = 0.
\end{cases}
\end{aligned}
\end{equation}
Next, we identify what conditions are needed for equating the control problem RLQG and the above HJB equation. Denote $\mathcal S$ to be the set of $v$ such that $v \in C^{\infty}$ satisfies
$$\left(1+|x|^2 \right)^{-1} \left(|v|+|\partial_t v| \right) + \left(1+|x| +|\mu| \right)^{-1} \left(|\partial_x v| + |\partial_{\mu} v| \right) + \left( |\partial_{xx} v| + |\partial_{x\mu} v| + |\partial_{\mu \mu} v| + |\partial_{\nu} v| \right) < K$$
for all $(t, x, \mu, \nu)$ for some positive constant $K$.
\begin{lemma}
\label{l:verification}
Consider  the control problem RLQG with some given smooth functions $w = (w_i: i = 1,2, \dots, 6)$.
\begin{enumerate}
\item (Verification theorem) Suppose there exists a solution $v \in {\mathcal S}$ of  \eqref{eq:hjb1}. Then $v(t, x, \bar \mu,\bar \nu) = \bar V(t, x, \bar \mu,\bar \nu)$, and an optimal control is provided by \eqref{eq:optimal control}.
\item Suppose that the value function $\bar V$ belongs to ${\mathcal S}$, and then $\bar V(t, x, \bar \mu,\bar \nu)$ solves HJB equation \eqref{eq:hjb1}. Moreover, $\hat \alpha$ of \eqref{eq:optimal control} is an optimal control.
\end{enumerate}
\end{lemma}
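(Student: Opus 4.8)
The plan is to run the classical verification argument built on It\^o's formula, supplemented by the dynamic programming principle for the converse direction. Write $\theta = (x, \bar\mu, \bar\nu)$ for the initial data and $\Theta_s = (X_s, \mu_s, \nu_s)$ for the three-dimensional state started from $\theta$ at time $t$, and let $\mathcal L^\alpha$ be the generator of $\Theta$ under a control $\alpha$, namely
$$\mathcal L^\alpha v = \alpha \partial_x v + (w_1\bar\mu + w_2)\partial_{\bar\mu} v + (w_3\bar\mu + w_4\bar\nu + w_5\bar\mu^2 + w_6)\partial_{\bar\nu} v + \partial_{xx}v + \tfrac12\partial_{\bar\mu\bar\mu}v + \partial_{x\bar\mu}v + 2\bar\mu^2\partial_{\bar\nu\bar\nu}v + 2\bar\mu\partial_{\bar\mu\bar\nu}v + 2\bar\mu\partial_{x\bar\nu}v,$$
where the second-order part is read off from the diffusion matrix induced by the shared noise $\tilde W$ and the idiosyncratic noise $W$ in \eqref{eq:X} and \eqref{eq:mu_sigma} (the coefficients $1, \tfrac12, 1, 2\bar\mu^2, 2\bar\mu, 2\bar\mu$ come from the covariations $d\langle X\rangle = 2\,dt$, $d\langle\mu\rangle = dt$, $d\langle X,\mu\rangle = dt$, $d\langle\nu\rangle = 4\bar\mu^2 dt$, $d\langle\mu,\nu\rangle = 2\bar\mu\,dt$, $d\langle X,\nu\rangle = 2\bar\mu\,dt$). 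With this notation the HJB equation \eqref{eq:hjb1} rewrites, for every admissible $\alpha$, as the pointwise identity $\partial_t v + \mathcal L^\alpha v + \tfrac12\alpha^2 + \bar F = \tfrac12(\alpha + \partial_x v)^2 \ge 0$, with equality exactly when $\alpha = -\partial_x v$.

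For Part (1), I fix an arbitrary $\alpha \in \mathcal A = L^4_{\mathbb F}$ with corresponding state $\Theta$ and apply It\^o's formula to $v(s, \Theta_s)$ on $[t, T]$. Taking expectations and using the terminal condition $v(T, \cdot) = 0$ gives
$$-v(t, x, \bar\mu, \bar\nu) = \mathbb E\left[\int_t^T \left(\partial_t v + \mathcal L^\alpha v\right)(s, \Theta_s)\, ds\right],$$
once the stochastic integrals are shown to be true martingales. The pointwise inequality then yields $\partial_t v + \mathcal L^\alpha v \ge -\tfrac12\alpha^2 - \bar F(\Theta)$, so $v(t,\theta)$ is bounded above by $\mathbb E[\int_t^T(\tfrac12\alpha_s^2 + \bar F(\Theta_s))\,ds]$, the cost of $\alpha$; minimizing over $\alpha$ gives $v \le \bar V$. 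For the reverse inequality I feed in the feedback control $\hat\alpha_s = -\partial_x v(s, \hat X_s, \mu_s, \nu_s)$ of \eqref{eq:optimal control}, along which the inequality is an equality, so $v$ equals the cost of $\hat\alpha$ and hence $v \ge \bar V$. Combining the two bounds gives $v = \bar V$ and the optimality of $\hat\alpha$. Two admissibility checks are required: that the closed-loop SDE driven by $\hat\alpha$ is well posed with a finite fourth moment, and that $\hat\alpha \in \mathcal A$. Both follow from the bound $|\partial_x v| \le K(1 + |x| + |\mu|)$ built into $\mathcal S$, which renders the feedback Lipschitz-controlled by $(x,\mu)$, together with standard moment estimates for the linear $(X, \mu)$ dynamics starting from deterministic data.

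For Part (2), I start from the dynamic programming principle, which for small $h > 0$ reads $\bar V(t, \theta) = \inf_\alpha \mathbb E[\int_t^{t+h}(\tfrac12\alpha_s^2 + \bar F(\Theta_s))\,ds + \bar V(t+h, \Theta_{t+h})]$. Since $\bar V \in \mathcal S$ is $C^\infty$, applying It\^o to $\bar V(s,\Theta_s)$, dividing by $h$, and letting $h \downarrow 0$ produces \eqref{eq:hjb1} in the classical sense; freezing a constant control value $a$ and then optimizing pointwise recovers the Hamiltonian $-\tfrac12(\partial_x \bar V)^2$. Optimality of $\hat\alpha$ then follows from Part (1).

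The main obstacle is the integrability bookkeeping that legitimizes every expectation above, which is delicate because $\nu$ has a drift quadratic in $\mu$ and a diffusion coefficient $2\mu$ linear in $\mu$, while the running cost $\bar F$ is quadratic in $(x,\mu,\nu)$. The resolution is that the growth conditions defining $\mathcal S$ are tailored to exactly this structure: $v$ grows like $1 + |x|^2$, the first derivatives $\partial_x v, \partial_{\bar\mu} v$ like $1 + |x| + |\mu|$, and the remaining second derivatives together with $\partial_{\bar\nu} v$ stay bounded. These bounds ensure that the It\^o correction terms are integrable and, crucially, that the quadratic variations of the stochastic integrals have finite expectation, so the local martingales are genuine martingales and vanish under $\mathbb E$. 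I would verify this by combining the $\mathcal S$-bounds with uniform moment estimates on $X$ and $\mu$ (hence on $\nu$) and the hypothesis $\alpha \in L^4_{\mathbb F}$.
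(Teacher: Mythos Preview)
Your proposal is correct and follows essentially the same approach as the paper: It\^o's formula plus the HJB inequality for the verification direction, and the dynamic programming principle with $h\downarrow 0$ for the converse. Your completed-square identity $\partial_t v + \mathcal L^\alpha v + \tfrac12\alpha^2 + \bar F = \tfrac12(\alpha+\partial_x v)^2$ is just a repackaging of the paper's $-\mathcal G^a v \le \tfrac12 a^2 + \bar F$, and you are somewhat more explicit than the paper about the integrability bookkeeping that justifies killing the stochastic integrals, but the logic is the same.
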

\begin{proof}
\begin{enumerate}
\item First, we prove the verification theorem. Since $v \in {\mathcal S}$, for any admissible $\alpha\in \mathcal H_{\mathbb F}^4$, the process $X^\alpha$ is well defined and one can apply It\^o's formula to obtain
$$\mathbb E \left[v(T, X_T, \mu_T,\nu_T)\right] = v(t, x, \bar \mu,\bar \nu) + \mathbb E \left[ \int_t^T \mathcal G^{\alpha(s)} v(s, X_s, \mu_s, \nu_s) ds \right],$$
where
\begin{equation*}
\begin{aligned}
\mathcal G^a f(s, x, \bar{\mu}, \bar{\nu}) &= \Big(\partial_t + a\partial_x + \partial_{xx} + \left(w_{1}\bar \mu+w_{2}\right) \partial_{\bar\mu} + \left(w_{3}\bar \mu + w_{4}\bar \nu+w_{5}\bar \mu^2 + w_{6}\right) \partial_{\bar\nu}  \\
&  \hspace{0.5in} +\frac{1}{2} \partial_{\bar \mu \bar \mu} + 2 \bar \mu^2 \partial_{\bar \nu \bar \nu} + \partial_{x \bar \mu} + 2 \bar \mu \partial_{\bar \mu \bar \nu} + 2 \bar \mu \partial_{x \bar \nu} \Big) f(s, x, \bar\mu, \bar\nu).
\end{aligned}
\end{equation*}
Note that the HJB equation actually implies that
$$\inf_a \left\{ \mathcal G^a v + \frac 1 2 a^2\right\} = - \bar F,$$
which again yields
$$- \mathcal G^a v \le \frac 1 2 a^2 + \bar F.$$
Hence, we obtain that for all $\alpha\in \mathcal H_{\mathbb F}^4$,
\begin{equation*}
\begin{aligned}
& v(t, x, \bar \mu,\bar \nu) 
\\ 
= \ & \mathbb E \left[ \int_t^T  - \mathcal G^{\alpha(s)} v(s, X_s, \mu_s,\nu_s) ds \right]  + \mathbb E \left[v(T, X_T, \mu_T,\nu_T) \right] 
\\
\le \ & \mathbb E \left[ \int_t^T \left( \frac 1 2 \alpha^2(s) + \bar F(X_s, \mu_s, \nu_s) \right) ds \right] \\
=  \ & J(t, x, \alpha, \bar \mu,\bar \nu).
\end{aligned}
\end{equation*}			
In the above, if $\alpha$ is replaced by $\hat \alpha$ given by the feedback form \eqref{eq:optimal control}, then since $\partial_x v$ is Lipschitz continuous in $x$, there exists corresponding optimal path $\hat X \in \mathcal H_{\mathbb F}^4$. Thus, $\hat \alpha$ is also in $\mathcal H_{\mathbb F}^4$. One can repeat all the above steps by replacing $X$ and $\alpha$ by $\hat X$ and $\hat \alpha$, and $\le $ sign by $=$ sign to conclude that $v$ is indeed the optimal value. 
			
\item
The opposite direction of the verification theorem follows by  taking $\theta\to t$ for the dynamic programming principle, for all stopping time $\theta\in [t, T],$
\begin{equation*}
\begin{aligned}
    & \bar V(t, x, \bar \mu,\bar \nu) \\
	= \ & \mathbb E \left[ \left. \int_t^{\theta} \left(\frac{1}{2} \alpha_s^2 + \bar F(X_s, \mu_s,\nu_s) \right) ds + \bar V(\theta, X_\theta, \mu_\theta, \nu_\theta) \right \vert X_t = x, \mu_t = \bar \mu, \nu_t = \bar \nu  \right], 
\end{aligned}
\end{equation*}
which is valid under our regularity assumptions on all the partial derivatives.
\end{enumerate}
\end{proof}

\subsubsection{LQG solution}
It is worth noting that the costs $\bar F$ of RLQG are quadratic functions in $(x, \bar \mu, \bar \nu)$, while the drift function of the process $\nu$ of \eqref{eq:mu_sigma} is not linear in $(x, \bar \mu, \bar \nu)$. Therefore, the stochastic control problem RLQG does not fit into the typical LQG control structure. Nevertheless, similarly to the LQG solution, we guess the value function to be a quadratic function in the form of
\begin{equation}
\label{eq:v1}
	v(t, x, \bar \mu, \bar \nu) = a(t) x^2 + b(t) \bar \mu^2 +  c(t) \bar \nu + d(t) + e(t) x  + f (t) \bar \mu + g(t) x \bar \mu.
\end{equation}
Under the above setup for the value function $v$, for $t \in [0, T]$, the optimal control is given by
\begin{equation}
\label{eq:alpha}
    \hat \alpha_t = - \partial_x v(t, \hat X_t, \mu_t, \nu_t) = - 2 a(t) \hat X_t - e(t) - g(t) \mu_t,
\end{equation}
and the optimal path $\hat X$ is
\begin{equation}
\label{eq:Xhat}
\begin{cases}
	\vspace{4pt} \displaystyle d \hat X_t = \left( - 2 a(t) \hat X_t - e(t) - g(t) \mu_t \right) dt + dW_t + d \tilde{W}_t, \\
	\displaystyle \hat X_0 = X_0.
\end{cases}
\end{equation}
To proceed, we introduce the following Riccati system of ODEs for $t \in [0, T]$,
\begin{equation}
\label{eq:ode2}
\begin{aligned}
\begin{cases}
    \vspace{4pt}
	\displaystyle a' - 2a^2 + k  = 0, \\
	\vspace{4pt}
	\displaystyle b' - \frac{1}{2} g^2 + 2 b w_1 + c w_5 = 0, \\
	\vspace{4pt}
	\displaystyle c' + c w_4 + k = 0, \\
	\vspace{4pt}
	\displaystyle d' - \frac{1}{2} e^2 + f w_2 + c w_6 + 2a + b + g  = 0, \\
	\vspace{4pt}
    \displaystyle e' -2ae + w_2 g = 0 , \\
    \vspace{4pt}
	\displaystyle f' - eg + w_1 f + 2b w_2 + c w_3 = 0,\\
	\displaystyle g' - 2ag + w_1 g - 2k  = 0,
\end{cases}
\end{aligned}        
\end{equation}
with terminal conditions
\begin{equation}
\label{eq:ode2_terminal}
	a(T) = b(T) = c(T) = d(T) = e(T) = f(T) = g(T) = 0.
\end{equation}
	
\begin{lemma}
\label{l:ricatti1}
	Suppose there exists a unique solution $(a, b, c, d, e, f, g)$ to the Riccati system of ODEs \eqref{eq:ode2}-\eqref{eq:ode2_terminal} on $[0,T]$. Then the value function of (RMFG) is given by
    \begin{equation}
	\label{eq:Vhat}
	\begin{aligned}
			& \bar V (t, x, \bar \mu, \bar \nu) = v(t, x, \bar \mu, \bar \nu) \\ 
			= \ & a(t) x^2 + b(t) \bar \mu^2 +  c(t) \bar \nu + d(t) + e(t) x  + f (t) \bar \mu + g(t) x \bar \mu
	\end{aligned}
    \end{equation}
	for $t \in [0, T]$ and the optimal control and optimal path are given by \eqref{eq:alpha} and \eqref{eq:Xhat}, respectively.
\end{lemma}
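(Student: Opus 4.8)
The plan is to treat Lemma \ref{l:ricatti1} as a verification result. Assuming the Riccati system \eqref{eq:ode2}--\eqref{eq:ode2_terminal} has the stated solution $(a,b,c,d,e,f,g)$, I would show that the quadratic ansatz \eqref{eq:v1} is a classical solution of the HJB equation \eqref{eq:hjb1} belonging to the class $\mathcal S$, and then invoke part (1) of Lemma \ref{l:verification} to conclude $v=\bar V$ and to read off the optimal control and path. Thus the whole lemma reduces to two checks: that the ansatz solves the PDE, and that it satisfies the growth/regularity membership required by the verification theorem.

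The core computation is substituting $v = a x^2 + b\bar\mu^2 + c\bar\nu + d + ex + f\bar\mu + gx\bar\mu$ into \eqref{eq:hjb1} and matching coefficients. After recording $\partial_t v = a'x^2 + b'\bar\mu^2 + c'\bar\nu + d' + e'x + f'\bar\mu + g'x\bar\mu$, $\partial_x v = 2ax + e + g\bar\mu$, $\partial_{xx}v = 2a$, $\partial_{\bar\mu}v = 2b\bar\mu + f + gx$, $\partial_{\bar\mu\bar\mu}v = 2b$, $\partial_{x\bar\mu}v = g$, the key structural observation is that $v$ is \emph{linear} in $\bar\nu$, so $\partial_{\bar\nu}v = c$ is state-independent while $\partial_{\bar\nu\bar\nu}v = \partial_{x\bar\nu}v = \partial_{\bar\mu\bar\nu}v = 0$. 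This kills every second-order $\bar\nu$-term in \eqref{eq:hjb1}, which is exactly what prevents $\bar\nu^2$, $x\bar\nu$, $\bar\mu\bar\nu$ monomials from being generated, and is why a $\bar\nu$-linear ansatz can close despite the non-LQG drift of $\nu$ in \eqref{eq:mu_sigma}. The left-hand side of \eqref{eq:hjb1} then becomes a polynomial supported exactly on $x^2,\bar\mu^2,\bar\nu,1,x,\bar\mu,x\bar\mu$; collecting coefficients yields $a'-2a^2+k=0$ from $x^2$, $b'-\frac{1}{2}g^2+2bw_1+cw_5=0$ from $\bar\mu^2$, $c'+cw_4+k=0$ from $\bar\nu$, $d'-\frac{1}{2}e^2+w_2 f+cw_6+2a+b+g=0$ from the constant term, $e'-2ae+w_2 g=0$ from $x$, $f'-eg+w_1 f+2bw_2+cw_3=0$ from $\bar\mu$, and $g'-2ag+w_1 g-2k=0$ from $x\bar\mu$. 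These are precisely \eqref{eq:ode2}, and the terminal requirement $v(T,\cdot)=0$ reduces to \eqref{eq:ode2_terminal}, so the ansatz solves \eqref{eq:hjb1} iff the coefficients solve the Riccati system.

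It then remains to verify $v\in\mathcal S$ and apply Lemma \ref{l:verification}(1). Since each of $a,\dots,g$ is continuous on the compact interval $[0,T]$, all coefficients are bounded, so $\partial_{xx}v=2a$, $\partial_{x\bar\mu}v=g$, $\partial_{\bar\mu\bar\mu}v=2b$, $\partial_{\bar\nu}v=c$ are bounded, $\partial_x v$ and $\partial_{\bar\mu}v$ grow at most linearly, and $v,\partial_t v$ grow quadratically, which is the polynomial growth demanded by $\mathcal S$. Lemma \ref{l:verification}(1) then gives $v=\bar V$, identifies the Hamiltonian minimizer \eqref{eq:optimal control}, which for this quadratic $v$ becomes $\hat\alpha_t = -2a(t)\hat X_t - e(t) - g(t)\mu_t$, i.e. \eqref{eq:alpha}, with closed-loop dynamics \eqref{eq:Xhat}. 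I expect the only genuinely delicate point to be the membership $v\in\mathcal S$ together with the well-posedness and $L^4$-integrability of the closed-loop state \eqref{eq:Xhat} (needed so that Itô's formula is licensed inside Lemma \ref{l:verification}); the latter follows because the feedback is affine in $(\hat X,\mu)$ with bounded coefficients and $X_0,\mu$ have sufficiently many moments under Assumption \ref{a:asm1}. The coefficient matching itself is routine bookkeeping once the $\bar\nu$-linearity is exploited.
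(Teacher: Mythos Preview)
Your proposal is correct and follows essentially the same route as the paper: compute the partial derivatives of the quadratic ansatz \eqref{eq:v1}, substitute into the HJB equation \eqref{eq:hjb1}, match coefficients to recover exactly the Riccati system \eqref{eq:ode2}--\eqref{eq:ode2_terminal}, observe that continuity of $(a,\dots,g)$ on $[0,T]$ gives the required membership $v\in\mathcal S$, and then invoke the verification theorem Lemma~\ref{l:verification}(1). Your explicit remark that the $\bar\nu$-linearity of the ansatz annihilates all second-order $\bar\nu$-derivatives (so the non-LQG quadratic drift of $\nu$ never produces new monomials) is a helpful structural observation that the paper leaves implicit.
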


\begin{proof}
	With the form of value function $v$ given in \eqref{eq:v1} and the conditional first and second moment of $\hat X_t$ under the $\sigma$-algebra $\tilde{\mathcal F}_t$ given in \eqref{eq:mu_sigma}, we have
	\begin{equation*}
	\begin{aligned}
		&\partial_t v = a'(t) x^2 + e'(t) x + b'(t) \bar \mu^2 + f'(t) \bar \mu + g'(t) x \bar \mu +  c'(t) \bar \nu + d'(t), \\
		& \partial_x v = 2 x a(t) + e(t) + g(t) \bar \mu, \\
		& \partial_{xx} v = 2a(t), \\
		& \partial_{\bar\mu} v = 2b(t) \bar \mu + f(t) + g(t)x, \\
		& \partial_{\bar\nu} v = c(t), \\
		& \partial_{\bar \mu \bar \mu} v = 2b(t), \\
		& \partial_{x \bar \mu} v = g(t), \\ 
		& \partial_{\bar \mu \bar \nu} v = \partial_{\bar \nu \bar \nu} v  = \partial_{x \bar \nu} v = 0.
	\end{aligned}
	\end{equation*}
	Plugging them back to the HJB equation in \eqref{eq:hjb1}, we get a system of ODEs in \eqref{eq:ode2} by equating $x$, $\bar \mu$, $\bar \nu$-like terms in each equation with the terminal conditions given in \eqref{eq:ode2_terminal}.
		
	Therefore, any solution $(a, b, c, d, e, f, g)$ of a system of ODEs \eqref{eq:ode2} leads to the solution of HJB \eqref{eq:hjb1} in the form of the quadratic function given by \eqref{eq:Vhat}. Since the $(a, b, c, d, e, f, g)$ are differentiable functions on the closed set $[0, T]$, they are also bounded, and thus the regularity conditions needed for $v \in \mathcal S$ is valid. Finally, we invoke the verification theorem given by Lemma \ref{l:verification} to conclude the desired result.
\end{proof}

\subsection{Fixed point condition and the proof of Proposition \ref{p:mfg}}
\label{s:proof_of_main_result}

Returning to the ODE  system  \eqref{eq:ode2}, there are $7$ equations, whereas we need to determine a total of $13$ deterministic functions of $[0, T]\times \mathbb R$ to characterize MFG. These are
$$(a, b, c, d, e, f, g) \quad \hbox{ and } \quad  (w_{i}: i = 1, 2, \ldots, 6).$$
In this below, we identify the missing $6$ equations by checking the fixed point condition of RFP. This leads to a complete characterization of the equilibrium for MFG in Definition \ref{d:ne}.

\begin{lemma}
\label{l:fixed_point_condition}
    With the dynamic of the optimal path $\hat{X}$ defined in \eqref{eq:Xhat}, the fixed point condition \eqref{eq:fpc} implies that the first moment $\hat \mu_s := \mathbb E [\hat{X}_s \vert \tilde{\mathcal F}_s ]$ and the second moment $\hat \nu_s := \mathbb E [\hat{X}_s^2 \vert \tilde{\mathcal F}_s ]$ of the optimal path conditioned on $\tilde{\mathcal F}_t$ satisfy
\begin{equation}
\label{eq:mu_sigma_gen}
\begin{aligned}
\begin{cases}
    \vspace{4pt}
	\displaystyle \hat \mu_s = \bar\mu +
	\int_t^s \left( \left( -2a(r) - g(r) \right) \hat \mu_r - e(r)\right) dr + \tilde{W}_s, \\
	\displaystyle \hat \nu_s = \bar\nu + \int_t^s  \left(2-4a(r)  \hat \nu_r - 2e(r) \hat\mu_r - 2g(r)\hat\mu_r^2 \right) dr + \int_t^s 2 \hat \mu_r d \tilde{W}_r,
\end{cases}
\end{aligned}
\end{equation}
for $s\ge t$, and thus the coefficient functions $w = (w_i: i = 1, 2, \dots, 6)$ in \eqref{eq:mu_sigma} satisfy the following equations:
\begin{equation}
\label{eq:w12}
	w_{1} =-2a - g, \ w_{2} = -e, \ w_{3} = -2e, \ w_{4} = -4a, \ w_{5} = -2g, \ w_{6} = 2, \quad \forall t \in [0, T].
\end{equation}  
\end{lemma}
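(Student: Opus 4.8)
The plan is to obtain the two equations in \eqref{eq:mu_sigma_gen} by taking conditional expectations given $\tilde{\mathcal F}_s$ of the dynamics of $\hat X$ and of $\hat X^2$, and then to read off \eqref{eq:w12} by matching the resulting drift and diffusion coefficients against the postulated system \eqref{eq:mu_sigma}. Throughout I will invoke the fixed point condition \eqref{eq:fpc}, substituting $\mu_r = \hat\mu_r$ and $\nu_r = \hat\nu_r$ once the conditional moments have been identified.

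First I would establish the measurability fact that drives everything. Since the feedback drift in \eqref{eq:Xhat} depends on the population input only through the $\tilde{\mathcal F}_r$-adapted process $\mu_r$, the solution $\hat X_r$ is a functional of $(X_0, W_{[0,r]}, \tilde W_{[0,r]})$, hence $\mathcal F_r$-measurable. Because $X_0$ and $W$ are independent of $\tilde W$ and the increments $\tilde W_u - \tilde W_r$ with $u>r$ are independent of $\tilde{\mathcal F}_r$, for any $r \le s$ one has $\mathbb E[\hat X_r \mid \tilde{\mathcal F}_s] = \mathbb E[\hat X_r \mid \tilde{\mathcal F}_r] = \hat\mu_r$, and likewise $\mathbb E[\hat X_r^2 \mid \tilde{\mathcal F}_s] = \hat\nu_r$. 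Writing \eqref{eq:Xhat} in integral form on $[t,s]$, conditioning on $\tilde{\mathcal F}_s$, applying conditional Fubini to the drift, using $\mathbb E[W_s - W_t \mid \tilde{\mathcal F}_s] = 0$, and keeping the $\tilde{\mathcal F}_s$-measurable increment of $\tilde W$, I obtain the first line of \eqref{eq:mu_sigma_gen}; after $\mu_r = \hat\mu_r$ the drift is $(-2a(r)-g(r))\hat\mu_r - e(r)$, matching \eqref{eq:mu_sigma} with $w_1 = -2a-g$ and $w_2 = -e$.

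Next I would apply It\^o's formula to $\hat X_s^2$. Since the martingale part of $\hat X$ is $W + \tilde W$ with $W \perp \tilde W$, the quadratic variation satisfies $d[\hat X]_s = 2\,ds$, so
$$d(\hat X_s^2) = \big(-4a(s)\hat X_s^2 - 2e(s)\hat X_s - 2g(s)\mu_s \hat X_s + 2\big)\,ds + 2\hat X_s\,dW_s + 2\hat X_s\,d\tilde W_s.$$
Conditioning on $\tilde{\mathcal F}_s$ and integrating over $[t,s]$, the drift is handled as before (the cross term $\mu_r \hat X_r$ conditions to $\mu_r \hat\mu_r = \hat\mu_r^2$, as $\mu_r$ is $\tilde{\mathcal F}_s$-measurable), and the $dW$-integral has zero conditional mean since, conditionally on $\tilde{\mathcal F}_s$ together with the past idiosyncratic noise, each $W$-increment is still centered. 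The delicate term is $\int_t^s 2\hat X_r\,d\tilde W_r$: I would decompose $\hat X_r = \hat\mu_r + (\hat X_r - \hat\mu_r)$ and note from \eqref{eq:Xhat} and the $\hat\mu$ equation that the deviation solves $d(\hat X_r - \hat\mu_r) = -2a(r)(\hat X_r - \hat\mu_r)\,dr + dW_r$, so it is driven purely by the idiosyncratic noise, is independent of $\tilde W$, and has mean zero. A simple-integrand approximation then gives $\mathbb E\big[\int_t^s 2(\hat X_r - \hat\mu_r)\,d\tilde W_r \mid \tilde{\mathcal F}_s\big] = \int_t^s 2\,\mathbb E[\hat X_r - \hat\mu_r]\,d\tilde W_r = 0$, while $\int_t^s 2\hat\mu_r\,d\tilde W_r$ is already $\tilde{\mathcal F}_s$-measurable. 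This produces the second line of \eqref{eq:mu_sigma_gen}, and matching its drift $2 - 4a\hat\nu - 2e\hat\mu - 2g\hat\mu^2$ and diffusion $2\hat\mu$ against \eqref{eq:mu_sigma} yields $w_3 = -2e$, $w_4 = -4a$, $w_5 = -2g$, $w_6 = 2$, completing \eqref{eq:w12}.

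I expect the main obstacle to be the rigorous justification of the two conditional-expectation exchanges: the identity $\mathbb E[\hat X_r \mid \tilde{\mathcal F}_s] = \hat\mu_r$ for $r \le s$ (future-common-noise independence together with adaptedness of the feedback), and the conditional stochastic Fubini step $\mathbb E\big[\int_t^s 2\hat X_r\,d\tilde W_r \mid \tilde{\mathcal F}_s\big] = \int_t^s 2\hat\mu_r\,d\tilde W_r$. Once these are secured, the remainder is routine It\^o calculus and term-by-term coefficient matching.
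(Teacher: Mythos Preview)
Your proposal is correct and follows essentially the same route as the paper: write $\hat X$ in integral form, take conditional expectations given $\tilde{\mathcal F}_s$ to get the $\hat\mu$ equation, apply It\^o to $\hat X^2$, condition again for the $\hat\nu$ equation, then match coefficients against \eqref{eq:mu_sigma}. The paper's proof is in fact briefer than yours---it simply asserts the identities $\mathbb E[\int_0^t 2\hat X_s\,dW_s \mid \tilde{\mathcal F}_t]=0$ and $\mathbb E[\int_0^t 2\hat X_s\,d\tilde W_s \mid \tilde{\mathcal F}_t]=\int_0^t 2\hat\mu_s\,d\tilde W_s$ without the decomposition $\hat X_r = \hat\mu_r + (\hat X_r - \hat\mu_r)$ or the simple-integrand approximation you outline---so your extra care on those two exchanges is not strictly needed here, but it does no harm.
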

\begin{proof}
    With the dynamic of the optimal path $\hat{X}$ given by \eqref{eq:Xhat}, we have
    $$\hat X_t = X_0 + \int_0^t \left( - 2 a(s) \hat X_s - e(s) - g(s) \hat \mu_s \right) ds + W_t + \tilde{W}_t,$$
    and since the functions $a, e, g$ are continuous on $[0, T]$, then we can change of order of integration and expectation and it yields
    \begin{equation*}
    \begin{aligned}
        \hat \mu_t &= \mathbb E \left[\left. \hat X_t \right\vert \tilde{\mathcal F}_t \right] \\
        & = \mathbb E \left[\left. X_0 \right\vert \tilde{\mathcal F}_t \right] + \int_0^t \left(-2a(s) \hat \mu_s - e(s) - g(s) \hat \mu_s \right) ds +  \mathbb E \left[\left. W_t \right\vert \tilde{\mathcal F}_t \right] +  \mathbb E \left[\left. \tilde{W}_t \right\vert \tilde{\mathcal F}_t \right] \\
        & = \mathbb E \left[\left. X_0 \right\vert \tilde{\mathcal F}_t \right] + \int_0^t \left(-2a(s) \hat \mu_s - e(s) - g(s) \hat \mu_s \right) ds +  \tilde{W}_t.
    \end{aligned}
    \end{equation*}
    Similarly, applying It\^o's formula, we obtain 
    $$\hat X_t^2 = X_0^2 + \int_0^t \left(2 - 4a(s) \hat X_s^2 - 2e(s) \hat X_s - 2g(s) \hat \mu_s \hat X_s \right) ds + \int_0^t 2 \hat X_s d W_s + \int_0^t 2 \hat X_s d \tilde{W}_s,$$
    and it follows that
    \begin{equation*}
    \begin{aligned}
        \hat \nu_t &= \mathbb E \left[\left. \hat X_t^2 \right\vert \tilde{\mathcal F}_t \right] \\
        & = \mathbb E \left[\left. X_0^2 \right\vert \tilde{\mathcal F}_t \right] + \int_0^t \left(2 - 4a(s) \hat \nu_s - 2 e(s) \hat \mu_s - 2g(s) \hat \mu_s^2 \right) ds +  \mathbb E \left[\left. \int_0^t 2 \hat X_s d W_s \right\vert \tilde{\mathcal F}_t \right] +  \mathbb E \left[\left. \int_0^t 2 \hat X_s d \tilde{W}_s \right\vert \tilde{\mathcal F}_t \right] \\
        & = \mathbb E \left[\left. X_0^2 \right\vert \tilde{\mathcal F}_t \right] +\int_0^t \left(2 - 4a(s) \hat \nu_s - 2 e(s) \hat \mu_s - 2g(s) \hat \mu_s^2 \right) ds +  \int_0^t 2 \hat \mu_s d \tilde{W}_s. 
    \end{aligned}
    \end{equation*}
    Thus the desired result in \eqref{eq:mu_sigma_gen} is obtained. Next, comparing the terms in \eqref{eq:mu_sigma} and \eqref{eq:mu_sigma_gen}, to satisfy the fixed point condition in MFG, we require another $6$ equations in \eqref{eq:w12} for the coefficient functions $w = (w_i: i = 1, 2, \dots, 6)$.
\end{proof}

Using further algebraic structures, one can reduce the ODE system of $13$ equations composed by  \eqref{eq:ode2} and \eqref{eq:w12} into a system of $4$ equations.
	
\begin{proof}
[{\bf Proof of Proposition \ref{p:mfg}}]
Let the smooth and bounded functions $\{w_i: i = 1, 2, \dots, 6\}$ be given, the functions $\left(a, b, c, d, e, f, g \right)$ in \eqref{eq:ode2} is a coupled linear system, and thus their existence, uniqueness and boundedness is shown by Theorem 12.1 in \cite{antsaklis2006}.

Plugging the $6$ equations in \eqref{eq:w12} to the ODE system \eqref{eq:ode2}, we obtain
\begin{equation*}
\begin{aligned}
\begin{cases}
    \vspace{4pt}
    \displaystyle a' - 2a^2 + k  = 0, \\
    \vspace{4pt}
    \displaystyle b' - \frac{1}{2} g^2 -4ab -2bg -2cg = 0, \\
    \vspace{4pt}
    \displaystyle c' -4ac + k = 0, \\
    \vspace{4pt}
    \displaystyle d' - \frac{1}{2} e^2 -ef + 2c + 2a + b + g = 0, \\
    \vspace{4pt}
    \displaystyle e' -2ae -eg = 0 , \\
    \vspace{4pt}
	\displaystyle f' - eg -2a f -gf - 2be - 2ce = 0,\\
	\vspace{4pt}
	\displaystyle g' - 4ag - g^2 - 2k  = 0,
\end{cases}
\end{aligned}
\end{equation*}
with the terminal conditions
\begin{equation*}
    a(T) = b(T) = c(T) = d(T) = e(T) = f(T) = g(T) = 0.
\end{equation*}
		
Let $l = 2a + g$, and it easily to obtain
\begin{equation*}
	l'(t) -l^2(t) = 0, \quad l(T) = 0,
\end{equation*}
which implies that $l(t) = 2a(t) + g(t) = 0$ for all $t \in [0, T]$. This gives the result that $g = -2a$ and it yields $e' = 0$. Then with $e(T) = 0$, we have $e(t) = 0$ for all $t \in [0, T]$ and thus one can obtain $f' = 0$, which indicates that $f(t) = 0$ for all $t \in [0, T]$ as $f(T) = 0$. Therefore the ODE system \eqref{eq:ode2} can be simplified to the following form about
$\left(a(t), b(t), c(t), d(t) : t \in [0, T] \right)$:
\begin{equation}
\label{eq:ode1}
\begin{aligned}
\begin{cases}
\vspace{4pt}
\displaystyle a'(t) -2a^2(t) + k = 0, \\
\vspace{4pt}
\displaystyle b'(t) - 2 a^2(t) + 4 a(t) c(t) = 0, \\
\vspace{4pt}
\displaystyle c'(t) -4 a(t) c(t) + k = 0,\\
\displaystyle d'(t) + b(t) + 2c(t) = 0,
\end{cases}
\end{aligned}
\end{equation}
with the terminal conditions
\begin{equation}
\label{eq:terminal conditions}
a(T) = b(T) = c(T) = d(T) = 0.
\end{equation}
The unique solvability of the Riccati system
\eqref{eq:ode1}-\eqref{eq:terminal conditions} is proven in Lemma \ref{l:solution of Riccati system} in the Appendix. Note that the solution $a$ of \eqref{eq:a_odep} is consistent with the solution of the Riccati system given by equations \eqref{eq:ode1}-\eqref{eq:terminal conditions}. 
	
In this case, since $2a + g = 0$ and $e=0$ for all $t \in [0, T]$, it follows that $\hat\mu_s = \bar\mu + \tilde{W}_s$ for all $s \in [t, T]$ from the fixed point result \eqref{eq:mu_sigma_gen}. Similarly,
$$\hat\nu_s = \bar\nu + \int_t^s \left(2 + 4a(r)\hat \mu_r^2 - 4 a(r) \hat\nu_r \right) \, d r + \int_t^s 2 \hat \mu_r \, d \tilde{W}_r, \quad \forall s \in [t, T].$$
Plugging $e = 0$ and $\hat \mu_s = \bar \mu + \tilde{W}_r$ back to \eqref{eq:alpha}, we obtain the optimal control by
$$\hat{\alpha}_s = 2a(s)(\bar \mu + \tilde{W}_s - \hat{X}_s).$$
Moreover, since $e = f = 0$ and $g = -2a$ for $s \in [t, T]$, the value function can be simplified from  \eqref{eq:v1} to
$$v(t, x, \bar\mu, \bar\nu) = a(t) x^2 -2a(t) x \bar \mu + b(t)\bar\mu^2 +  c(t) \bar \nu + d(t).$$
This concludes Proposition \ref{p:mfg}.
		
\end{proof}

\section{The $N$-Player Game}
\label{s:section4}
	
This section focuses on proving Proposition \ref{p:ABCexist} regarding the corresponding $N$-player game. For simplicity, we can omit the superscript $(N)$ when referring to the processes in the sample space $\Omega^{(N)}$.

To begin, we address the $N$-player game in Subsection~\ref{s:raw}, where we solve it and obtain a Riccati system containing $O(N^3)$ equations. Subsequently, we reduce the relevant Riccati system to an ODE system in Subsection~\ref{s:simplify}, which has a dimension independent of $N$. This simplified system forms the fundamental component of the convergence result.

\subsection{Characterization of the $N$-player game by Riccati system}
\label{s:raw}
It is important to emphasize that based on the problem setting in Subsection \ref{s:n-player} and the running cost for each player specified in \eqref{eq:running cost_n player}, the $N$-player game can be classified as an $N$-coupled stochastic LQG problem. As a result, the value function and optimal control for each player can be determined by means of the following Riccati system:

For $i = 1, 2, \ldots, N$, consider
\begin{equation}
\label{eq:ABC}
\begin{cases}
	\vspace{4pt} \displaystyle A_{i}' - 2 A_{i}^\top e_ie_i^\top A_{i} - 4 \sum_{j \ne i}^{N} A_{j}^\top e_je_j^\top  A_{i} + \frac{k}{N} \sum_{j \ne i}^{N} \left(e_i-e_j\right) \left(e_i-e_j\right)^\top= 0,\\
	\vspace{4pt} \displaystyle B_{i}' -2 A_{i}^\top e_i e_i^\top B_{i} - 2 \sum_{j \ne i}^{N} \left(A_{i}^\top e_je_j^\top B_{j}+A_{j}^\top e_j e_j^\top B_{i}\right) = 0,  \\
	\vspace{4pt} \displaystyle C_{i}' -\frac{1}{2} B_{i}^\top e_ie_i^\top B_{i} - \sum_{j \ne i}^{N} B_{j}^\top e_j e_j^\top B_{i} + 2 tr(A_{i}) =0,\\
	\displaystyle A_{i}(T) = B_{i}(T) = C_{i}(T) = 0,
\end{cases}
\end{equation}
where $A_i$ is $N\times N$ symmetric matrix, $B_i$ is $N$-dimensional vector, $C_{i} \in \mathbb R$ is a real constant, and $e_i$ is the $i$-th natural basis in $\mathbb R^N$ for each $i = 1, 2, \dots, N$.
	
\begin{lemma}
\label{l:riccati-N}
Suppose $(A_{i}, B_{i}, C_{i}: i = 1, 2, \ldots, N)$ is the solution of the Riccati system \eqref{eq:ABC}.
Then, the value functions of $N$-player game defined by \eqref{eq:value_i} is
$$V_i \left(x^{(N)} \right) = \left(x^{(N)} \right)^\top A_{i}(0) x^{(N)} + \left(x^{(N)} \right)^\top B_{i}(0) + C_{i}(0), \quad i=1, 2, \ldots, N.$$ 
Moreover, the path and the control under the equilibrium are given by
\begin{equation}
\label{eq:Xihat}
    d \hat X_{it}^{(N)} = \left(-2 (A_{i}(t))_i^\top \hat{X}_t^{(N)} -(B_{i}(t))_i\right) dt + dW^{(N)}_{it} + d \tilde{W}_t,
\end{equation}
and

\begin{equation*}
    \hat{\alpha}^{(N)}_{it}   = -2 (A_{i}(t))_i^\top \hat{X}^{(N)}_t -(B_{i}(t))_i
\end{equation*}
for each $i = 1, 2, \dots, N$, where $(A)_i$ denotes the $i$-th column of matrix $A$, $(B)_i$ denotes the $i$-th entry of vector $B$ and $\hat{X}^{(N)}_t = [\hat X^{(N)}_{1t},  \hat X^{(N)}_{2t},  \dots, \hat X^{(N)}_{Nt} ]^{\top}$.
\end{lemma}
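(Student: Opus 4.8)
The plan is to treat Lemma \ref{l:riccati-N} as the $N$-dimensional, Nash-equilibrium counterpart of the verification argument in Lemmas \ref{l:verification} and \ref{l:ricatti1}. Fix a player $i$ and regard the equilibrium feedbacks of the remaining players as frozen. Under those feedbacks, player $i$ faces a standard stochastic LQG control problem on the full state space $\mathbb R^N$, in which she governs only the drift of the $i$-th coordinate while the other coordinates evolve autonomously. Because each coordinate carries both its own idiosyncratic noise $W^{(N)}_i$ and the shared common noise $\tilde W$, the infinitesimal covariance of $\hat X^{(N)}_t$ is $\Sigma = I + \mathbf 1 \mathbf 1^\top$, and the generator of player $i$'s problem reads
\begin{equation*}
\mathcal G^a_i v = \partial_t v + a\, \partial_{x_i} v + \sum_{j\ne i} \hat\alpha_j(x)\, \partial_{x_j} v + \tfrac12 \mathrm{tr}\!\left(\Sigma\, D^2 v\right).
\end{equation*}
Writing the associated HJB equation for $v_i$, the inner minimization over $a$ produces the feedback $\hat\alpha_i = -\partial_{x_i} v_i$ and collapses the Hamiltonian term to $-\tfrac12 (\partial_{x_i} v_i)^2$.

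I would then insert the quadratic ansatz $v_i(t,x) = x^\top A_i(t) x + x^\top B_i(t) + C_i(t)$ with $A_i$ symmetric. The first-order condition yields the candidate control $\hat\alpha_i = -2 (A_i)_i^\top x - (B_i)_i$, exactly as claimed, and substituting the analogous opponents' feedbacks $\hat\alpha_j = -2 (A_j)_j^\top x - (B_j)_j$ encodes the Nash consistency. Expanding $\mathcal G^{\hat\alpha_i}_i v_i + \tfrac12 \hat\alpha_i^2 + F(x_i,\rho(x)) = 0$ and equating the coefficients of the quadratic, linear, and constant monomials in $x$ delivers the coupled system \eqref{eq:ABC}: the term $-2A_i^\top e_i e_i^\top A_i$ comes from $-\tfrac12(\partial_{x_i}v_i)^2$, the cross terms $-4\sum_{j\ne i} A_j^\top e_j e_j^\top A_i$ (and the corresponding linear ones in the $B_i$ equation) come from the frozen drift $\sum_{j\ne i}\hat\alpha_j \partial_{x_j} v_i$, and the running cost supplies $\tfrac{k}{N}\sum_{j\ne i}(e_i-e_j)(e_i-e_j)^\top$. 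Since $D^2 v_i = 2A_i$ is constant in $x$, the diffusion term feeds only into the constant equation for $C_i$, so neither the equilibrium path nor the control depends on the common-noise correlation.

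With a solution $(A_i,B_i,C_i: i=1,\dots,N)$ to \eqref{eq:ABC} in hand (its existence being the standing hypothesis), I would close with a verification step mirroring Lemma \ref{l:verification}. Since $[0,T]$ is compact the Riccati coefficients are bounded, so the equilibrium dynamics \eqref{eq:Xihat} form a linear SDE with bounded coefficients driven by Gaussian noise; hence $\hat X^{(N)}$ has finite moments of all orders and each $\hat\alpha_i$ lies in the admissible class $\mathcal A^{(N)} = L^4_{\mathbb F^{(N)}}$. Freezing $\hat\alpha_{-i}$ and applying It\^o's formula to $v_i(t,X_t)$ along an arbitrary admissible deviation $\alpha_i$, together with the pointwise HJB inequality $-\mathcal G^{a}_i v_i \le \tfrac12 a^2 + F$, gives $v_i(0,x) \le J_i^N(x,\alpha_i;\hat\alpha_{-i})$, with equality at $\alpha_i = \hat\alpha_i$. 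This identifies $v_i(0,\cdot)$ with $V_i$ and shows $\hat\alpha_i$ is a best response; as $i$ is arbitrary, $(\hat\alpha_1,\dots,\hat\alpha_N)$ is a Nash equilibrium with the stated path and control.

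I expect the main obstacle to be the bookkeeping that establishes Nash self-consistency in the coefficient matching: one must verify that inserting each opponent's equilibrium feedback $\hat\alpha_j = -2(A_j)_j^\top x - (B_j)_j$ into player $i$'s HJB reproduces \emph{precisely} the coupling terms of \eqref{eq:ABC}, and not some other cross-interaction. The integrability required for the verification (so that the It\^o stochastic integrals are genuine martingales and the deviating state is well defined) is routine given the boundedness of the Riccati coefficients and the Gaussian driving noise, exactly as in the single-player argument.
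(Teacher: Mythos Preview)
Your proposal is correct and follows essentially the same route as the paper: write the coupled HJB system for the $N$ players with the opponents' feedbacks frozen, insert the quadratic ansatz, match coefficients to obtain \eqref{eq:ABC}, and close with a verification via It\^o's formula. If anything you are more careful than the paper, which writes the second-order term simply as $\Delta v_i$ while you correctly identify the full covariance $\Sigma = I + \mathbf 1\mathbf 1^\top$ from the common noise; as you observe, this discrepancy affects only the constant $C_i$ and not the equilibrium control or path.
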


\begin{proof}
From the dynamic programming principle, it is standard that, under enough regularities, the players' value function $V(x^{(N)})= (V_1, V_2, \dots, V_N)(x^{(N)})$ can be lifted to  the solution $v_{i}(t, x^{(N)})$ of the following system of HJB equations, for $i = 1, 2, \dots, N$,
\begin{equation*}
\begin{aligned}
\begin{cases}
    	\vspace{4pt} \displaystyle \partial_t v_{i} + \inf_{a_{it} \in \mathbb R} \left( a_{it} \partial_{x_i} v_i + \frac{1}{2} a_{it}^2 \right)  +  \sum_{j \ne i}^{N} a_{jt} \partial_{x_j} v_{i} + \Delta v_{i} + \frac{k}{N} \sum_{j \ne i}^{N} \left(\left(e_i-e_j\right)^\top x^{(N)} \right)^2= 0, \\
	\displaystyle v_{i} \left(T, x^{(N)} \right) = 0.
\end{cases}
\end{aligned}
\end{equation*}
Note that with $a_{it} = - \partial_{x_i} v_i \left(t, x^{(N)} \right)$ for each $i = 1, 2, \dots, N$, the term in the infimum attains the optimal value and thus the HJB equation can be reduced to 
\begin{align}
\begin{cases}
	\vspace{4pt} \displaystyle \partial_t v_{i} - \frac{1}{2} \left(\partial_{x_i} v_i \right)^2  - \sum_{j \ne i}^{N} \partial_{x_j} v_j \partial_{x_j} v_{i} + \Delta v_{i} + \frac{k}{N} \sum_{j \ne i}^{N} \left(\left(e_i-e_j\right)^\top x^{(N)} \right)^2= 0,\\
	\displaystyle v_{i} \left(T, x^{(N)} \right) = 0.
\end{cases}
\label{V}
\end{align}
Then, the value functions $V$ of $N$-player game defined by \eqref{eq:value_i} is $V_i(x^{(N)}) = v_{i}(0, x^{(N)})$ for all $i=1, 2, \dots, N$. Moreover, the path and the control under the equilibrium are given by
$$d \hat X^{(N)}_{it} = - \partial_{x_i} v_{i} \left(t, \hat X^{(N)}_t \right) dt + dW^{(N)}_{it} + d \tilde{W}_t,$$
and
$$ \hat{\alpha}^{(N)}_{it} = - \partial_{x_i} v_{i} \left(t, \hat{X}^{(N)}_t \right)$$
for $i = 1, 2, \dots, N$.
The proof is the application of It\^o's formula and the details are omitted here.
Due to its LQG structure, the value function leads to a quadratic function of the form
$$v_{i} \left(t, x^{(N)} \right) = \left(x^{(N)} \right)^\top A_{i}(t) x^{(N)} + \left(x^{(N)} \right)^\top B_{i}(t) + C_{i}(t).$$ 
Plugging $V_{i}$ into \eqref{V}, and matching the coefficient of variables, we get the Riccati system of ODEs in \eqref{eq:ABC} and the desired results are obtained.
\end{proof}

\subsection{Proof of Proposition \ref{p:ABCexist}: Reduced Riccati form for the equilibrium}
\label{s:simplify}
At present, the MFG and the corresponding $N$-player game can be characterized by Proposition \ref{p:mfg} and Lemma \ref{l:riccati-N}, respectively. One of our primary objectives is to examine the convergence of the representative optimal path $\hat X_{1t}^{(N)}$ generated by the $N$-player game defined in \eqref{eq:ABC}-\eqref{eq:Xihat} to the optimal path $\hat X_t$ of the MFG described in Proposition \ref{p:mfg}.

It should be noted that $\hat X_t$ is solely dependent on the function $a(t)$, as indicated in the ODE \eqref{eq:a_odep}. In contrast, $\hat X_{1t}^{(N)}$ depends on $O(N^3)$ many functions derived from the solutions of a substantial Riccati system \eqref{eq:ABC} involving matrices $(A_{it}, B_{it}: i = 1, 2, \dots, N)$. Consequently, comparing these two processes meaningfully becomes an exceedingly challenging task without gaining further insight into the intricate structure of the Riccati system \eqref{eq:ABC}.
\begin{proof}
[{\bf Proof of Proposition \ref{p:ABCexist}}]
Inspired from the setup in \cite{JLSY22} and \cite{HY21}, we may seek a pattern for the matrix $A_i$ in the following form:
\begin{equation}
\label{eq:magic}
    (A_{i})_{pq} = \begin{cases}
	a_{1}(t), & \text{ if } p=q=i,\\
	a_{2}(t), & \text{ if } p=q\ne i,\\
	a_{3}(t), & \text{ if } p\ne q, p = i \text{ or } q = i, \\
	a_{4}(t), & \text{ otherwise}.
\end{cases}
\end{equation}
The next result justifies the above pattern: the $N^2$ entries of the matrix $A_{i}$ can be embedded to a $2$-dimensional vector space no matter how big $N$ is.

For the Riccati system \eqref{eq:ABC}, with the given of $A_i$ and suppose each function in $A_i$ is continuous on $[0, T]$, it is obvious to see that $B_{i} = 0$ for all $t \in [0, T]$ and for all $i = 1,2, \dots, N$. Note that in this case, for $i = 1, 2, \dots, N$, the  optimal control is given by
\begin{equation*}
    \hat{\alpha}_i  = - 2\sum_{j=1}^N (A_{i})_{ij} \hat X_{jt}^{(N)} = -2 \left(A_{i}\right)_i^\top \hat X^{(N)}_t,
\end{equation*}
where $(A)_i$ is the $i$-th column of matrix $A$.
		
Plugging the pattern \eqref{eq:magic} into the differential equation of $A_{i}$, we obtain the following system of ODEs:
\begin{equation*}
\begin{aligned}
\begin{cases}
    \vspace{4pt} \displaystyle a_{1}' -2a_{1}^2 - 4(N-1)a_{3}^2 + \frac{N-1}{N} k = 0, \\
	\vspace{4pt} \displaystyle a_{2}' - 2a_{3}^2 - 4a_{1}a_{2} - 4(N-2)a_{3}a_{4} + \frac{k}{N} = 0, \\
	\vspace{4pt} \displaystyle a_{3}' - 2a_{1}a_{3} - 4a_{1}a_{3}- 4(N-2)a_{3}^2 - \frac{k}{N} = 0, \\
	\vspace{4pt} \displaystyle a_{3}' -2a_{1}a_{3} - 4a_{2}a_{3} - 4(N-2)a_{3}a_{4} - \frac{k}{N} = 0, \\
	\displaystyle a_{4}' -2a_{3}^2 -4a_{2}a_{3} - 4a_{1}a_{4} - 4(N-3)a_{3}a_{4} = 0   
\end{cases}
\end{aligned}
\end{equation*}
with the terminal conditions
$$a_1(T) = a_2(T) = a_3(T) = a_4(T) = 0.$$
It is worth noting that there are two ODEs for $a_3$, and the two expressions should be equal, thus
\begin{equation*}
    a_{1} a_{3} + (N-2) a_{3}^2 = a_{2} a_{3} + (N-2) a_{3} a_{4},
\end{equation*}
which implies that $\left(a_{1} +(N-2)a_{3}\right)' =\left( a_{2} + (N-2) a_{4}\right)'$ or
\begin{equation*}
\begin{aligned}
	& 2a_{1}^2+2(N-2)a_{1}a_{3} + 4(N-1)a_{3}^2 +4(N-2)a_{2}a_{3} +4(N-2)^2a_{3}a_{4} -\frac{k}{N} \\
	= \ & 2(N-1)a_{3}^2 +4a_{1}a_{2}+4(N-2)(a_{2}a_{3}+a_{3}a_{4}+a_{1}a_{4}) +4(N-2)(N-3) a_{3}a_{4} -\frac{k}{N}.
\end{aligned}
\end{equation*}
After combining terms and substituting $a_{2}+(N-2)a_{4}$ with $a_{1} +(N-2) a_{3}$, we get 
\begin{equation*}
    a_{1}^2 +(N-2)a_{1}a_{3} - (N-1)a_{3}^2 = 0,
\end{equation*}
which yields $a_{3} = a_{1}$ or $a_{3} = -\frac{1}{N-1} a_{1}$. Note that, since $a_1$ and $a_3$ satisfies different differential equations, it follows that $a_{3}\ne a_{1}$. Hence, we can conclude that $a_{3} = -\frac{1}{N-1} a_{1}$. Next, from the equation $a_1 + (N-2) a_3 = a_2 + (N-2)a_4$, we have
\begin{equation*}
    a_4 = \frac{1}{N-2} a_1 + a_3 - \frac{1}{N-2} a_2. 
\end{equation*}
In conclusion, for $i = 1, 2, \dots, N$, $A_{i}$ has the following expressions:
\begin{equation*}
	(A_{i})_{pq} =
\begin{cases}
	\vspace{4pt} \displaystyle a_{1}(t), & \text{ if } p=q=i,\\
	\vspace{4pt} \displaystyle a_{2}(t), & \text{ if } p=q\ne i,\\
	\vspace{4pt} \displaystyle -\frac{1}{N-1}a_{1}(t), & \text{ if } p\ne q, p = i \text{ or } q = i,\\
	\displaystyle \frac{1}{(N-1)(N-2)}a_{1}(t)- \frac{1}{N-2}a_{2} (t), & \text{ otherwise},
\end{cases}
\end{equation*}
where $a_1$ and $a_2$ satisfies the system of ODEs
\eqref{eq:a_12}
\begin{equation}
\label{eq:a_12}
\begin{cases}
	\vspace{4pt} \displaystyle a_{1}' -\frac{2(N+1)}{N-1}a_{1}^2 + \frac{N-1}{N} k = 0, \\
	\vspace{4pt} \displaystyle a_{2}' +\frac{2}{(N-1)^2}a_{1}^2 -\frac{4N}{N-1}a_{1}a_{2} + \frac{k}{N} = 0,\\
	\displaystyle a_{1}(T) = a_{2}(T) = 0.
\end{cases}
\end{equation}

The existence and uniqueness of $A_i$ in \eqref{eq:ABC} are equivalent to the existence and uniqueness of \eqref{eq:a_12}. Firstly, the existence, uniqueness, and boundness of $a_1$ in \eqref{eq:a_12} is from the same argument for $a$ in \eqref{eq:ode1}, which is shown as the proof of Lemma \ref{l:solution of Riccati system} in Appendix. The explicit solution of $a_1$ is given by
\begin{equation*}
    a_1 (t) = \sqrt{\frac{k}{2} \frac{(N-1)^2}{N(N+1)}} \frac{1 - e^{-2 \sqrt{2} \sqrt{\frac{N+1}{N} k}(T-t)}}{1 + e^{-2 \sqrt{2} \sqrt{\frac{N+1}{N} k}(T-t)}}
\end{equation*}
for all $t \in [0, T]$. Next, with the given of $a_1$, the existence, uniqueness, and boundness of $a_2$ in \eqref{eq:a_12} is guaranteed by Theorem 12.1 in \cite{antsaklis2006}.
Therefore, we can express the equilibrium paths and associated controls as the following: 
\begin{equation}
\label{eq:XihatN}
    d \hat X_{it}^{(N)}  = - 2 a_{1}^N(t)  \left(\hat X_{it}^{(N)}  -\frac{1}{N-1}\sum_{j \ne i}^N  \hat X_{jt}^{(N)} 
	\right) dt + dW_{it}^{(N)} + d \tilde{W}_t,
\end{equation}
and
$$ \hat{\alpha}_{it}^{(N)} = - 2a_{1}^N (t) \left(\hat X_{it}^{(N)} -\frac{1}{N-1}\sum_{j \ne i}^N  \hat X_{jt}^{(N)} \right)$$
respectively for $i = 1, 2, \dots, N$, where $a_1^N$ is the solution to the ODE for $a_1$ in \eqref{eq:a_12}.
This concludes Proposition \ref{p:ABCexist}.
\end{proof}

\section{Further remark}
We have now established Proposition \ref{p:mfg} concerning the MFG in Section \ref{s:section3} and Proposition \ref{p:ABCexist} regarding the $N$-player game in Section \ref{s:section4}. With these propositions proven, we are now able to conclude the proof of Theorem \ref{t:main1}, which was presented in Section \ref{s:pthm}.

\section{Appendix}
\label{s:appendix}

\begin{lemma}
\label{l:was1}
Let $\mathbb W_p$ be the $p$-Wasserstein metric. If $X$ and $Y$ are two real-valued random variables and $c$ is a constant, then  
\begin{equation}
\label{eq:lwas1_1}
\mathbb W_p(\mathcal L(X), \mathcal L(Y)) =\mathbb W_p(\mathcal L(X+c), \mathcal L(Y+c)).
\end{equation}
Moreover, if $\alpha = \{\alpha_i: i \in \mathbb N\}$ is a sequence of random variables, then
\begin{equation}
\label{eq:lwas1_2}
\mathbb W_p \left(
\frac 1 N \sum_{i=1}^N \delta_{\alpha_i  + c}, \mathcal L(Y+c)
\right)
= 
\mathbb W_p \left(
\frac 1 N \sum_{i=1}^N \delta_{\alpha_i }, \mathcal L(Y)
\right)
.\end{equation}
\end{lemma}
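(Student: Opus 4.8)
The plan is to exploit the translation invariance of the cost functional $|x-y|^p$ appearing in the definition of $\mathbb W_p$. The key observation is that shifting both arguments by the same constant $c$ leaves the difference $x-y$ unchanged, so the transport cost of any coupling is preserved under such a shift.

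For the first identity \eqref{eq:lwas1_1}, I would introduce the shift map $T_c : \mathbb R^2 \to \mathbb R^2$ defined by $T_c(x,y) = (x+c, y+c)$ and establish a bijection between the coupling sets $\Pi(\mathcal L(X), \mathcal L(Y))$ and $\Pi(\mathcal L(X+c), \mathcal L(Y+c))$. Concretely, I would check directly from the definition of marginals that $\pi \in \Pi(\mathcal L(X), \mathcal L(Y))$ if and only if $(T_c)_* \pi \in \Pi(\mathcal L(X+c), \mathcal L(Y+c))$, since pushing a coupling forward by $T_c$ shifts each marginal by $c$. Because $|(x+c)-(y+c)|^p = |x-y|^p$, the change-of-variables formula gives $\int_{\mathbb R^2} |x-y|^p \, d\pi = \int_{\mathbb R^2} |x-y|^p \, d\big((T_c)_* \pi\big)$, and taking the infimum over the bijectively matched coupling sets yields the claimed equality.

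For the second identity \eqref{eq:lwas1_2}, I would observe that, for each fixed realization of the sequence, the empirical measure $\frac 1 N \sum_{i=1}^N \delta_{\alpha_i}$ is the law $\mathcal L(Z)$ of a random variable $Z$ distributed uniformly on the atoms $\{\alpha_1, \ldots, \alpha_N\}$, and correspondingly $\frac 1 N \sum_{i=1}^N \delta_{\alpha_i + c} = \mathcal L(Z+c)$. The second identity then follows immediately by applying the first identity \eqref{eq:lwas1_1} with $X = Z$ and the given $Y$.

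There is no serious obstacle here: the statement is elementary and rests solely on the translation invariance of the metric cost. The only point demanding mild care is verifying that the pushforward under $T_c$ maps coupling sets onto coupling sets while preserving \emph{both} marginal constraints, which is a routine unwinding of the definition of the marginals of a pushforward measure.
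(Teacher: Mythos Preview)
Your proposal is correct and follows essentially the same approach as the paper: the paper likewise uses the translation map $\Phi(x,y)=(x+c,y+c)$ and the pushforward of couplings for \eqref{eq:lwas1_1}, and for \eqref{eq:lwas1_2} it identifies the empirical measure as the conditional law of $\alpha_u$ with $u$ uniform on $\{1,\dots,N\}$ and then applies \eqref{eq:lwas1_1}. The only cosmetic difference is that the paper proves merely the inclusion $\Phi_*\Pi(\mathcal L(X),\mathcal L(Y))\subset \Pi(\mathcal L(X+c),\mathcal L(Y+c))$ to obtain one inequality and then invokes the same bound with the shift $-c$ to get the reverse, whereas you argue the bijection directly; both amount to the same observation.
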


\begin{proof}
By definition of the $p$-Wasserstein metric, we have:
$$\mathbb W_p(\mathcal L(X), \mathcal L(Y)) = \left(\inf_{\pi \in \Pi(\mathcal L(X), \mathcal L(Y))} \int_{\mathbb{R}^2} |x - y|^p d\pi(x,y)\right)^{\frac{1}{p}},$$
where $\Pi(\mathcal L(X), \mathcal L(Y))$ is the set of all joint probability measures with marginals $\mathcal L(X)$ and $\mathcal L(Y)$.
Similarly,
$$\mathbb W_p(\mathcal L(X+c), \mathcal L(Y+c)) = \left(\inf_{\pi \in \Pi(\mathcal L(X+c), \mathcal L(Y+c))} \int_{\mathbb{R}^2} |x - y|^p d\pi(x,y)\right)^{\frac{1}{p}},$$
where $\Pi(\mathcal L(X+c), \mathcal L(Y+c))$ is the set of all joint probability measures with marginals 
$\mathcal L(X+c)$ and $\mathcal L(Y+c)$.

Now, consider the mapping $\Phi:\mathbb{R}^2 \to \mathbb{R}^2$ given by $\Phi(x, y) = (x + c, y+c)$. 
For any $\pi \in \Pi(\mathcal L(X), \mathcal L(Y))$, the pushforward measure of $\pi$ under $\Phi$ belongs to 
$\Pi(\mathcal L(X+c), \mathcal L(Y+c))$, i.e., $\pi' = \Phi_{*} \pi \in \Pi(\mathcal L(X+c), \mathcal L(Y+c))$.
Thus, we have 
$$\Phi_{*}  \Pi(\mathcal L(X), \mathcal L(Y)) \subset  \Pi(\mathcal L(X+c), \mathcal L(Y+c)).$$
Moreover, $\Phi$ is bijective and measure preserving, then
$$\int_{\mathbb{R}^2} |x - y|^p d\pi'(x,y) = \int_{\mathbb{R}^2} |(x+c) - (y+c)|^p d\pi(x,y)= \int_{\mathbb{R}^2} |x - y|^p d\pi(x,y).$$
Therefore, we know that
\begin{equation*}
\begin{aligned}
\mathbb W_p^p \left( \mathcal L(X), \mathcal L(Y) \right) & =
\inf_{\pi \in \Pi(\mathcal L(X), \mathcal L(Y))} \int_{\mathbb{R}^2} |x - y|^p d\pi(x,y) \\ 
& = \inf_{\pi \in \Pi(\mathcal L(X), \mathcal L(Y))} \int_{\mathbb{R}^2} |x - y|^p d \Phi_{*}\pi(x,y) \\
& = \inf_{\pi' \in \Phi_{*} \Pi(\mathcal L(X), \mathcal L(Y))} \int_{\mathbb{R}^2} |x - y|^p d \pi'(x,y) \\ 
& \geq \mathbb W_p^p (\mathcal L(X+c), \mathcal L(Y+c)).
\end{aligned}
\end{equation*}
by the definition of the  $p$-Wasserstein metric. If we apply the above inequality to $X' = X+c$, $Y' = Y+c$, and $c' = -c$, the opposite inequality is provided. Thus, it completes the proof of \eqref{eq:lwas1_1}.

Next, we note that
$$\frac 1 N \sum_{i=1}^N \delta_{\alpha_i  + c} = \mathcal L(\alpha_u +c| \alpha),$$
where $u$ be a uniform random variable on $\{1, 2, \dots, N\}$ independent to $\alpha$. Using \eqref{eq:lwas1_1}, we conclude \eqref{eq:lwas1_2} from
\begin{equation*}
\begin{aligned}
\mathbb W_p \left(
\frac 1 N \sum_{i=1}^N \delta_{\alpha_i  + c}, \mathcal L(Y+c)
\right)
& = 
\mathbb W_p \left(
\mathcal L(\alpha_u +c| \alpha), \mathcal L(Y+c)
\right)
\\
& = \mathbb W_p \left(
\mathcal L(\alpha_u | \alpha), \mathcal L(Y)
\right)
\\
&= 
\mathbb W_p \left(
\frac{1}{N} \sum_{i=1}^N \delta_{\alpha_i }, \mathcal L(Y)
\right)
.
\end{aligned}
\end{equation*}
\end{proof}

\begin{lemma}
\label{l:solution of Riccati system}
Under the Assumption \ref{a:asm2}, there exists a unique solution $\left(a(t), b(t), c(t), d(t) : t \in [0, T] \right)$ for the Riccati system of ODEs \eqref{eq:ode1}-\eqref{eq:terminal conditions} and the solution can given explicitly by
\begin{equation*}
\begin{cases}
\vspace{4pt}
\displaystyle a(t) = \sqrt{\frac{k}{2}} \frac{1 - e^{-2 \sqrt{2k} (T-t)}}{1 + e^{-2 \sqrt{2k} (T-t)}}, \\ 
\vspace{4pt}
\displaystyle b(t) = \int_t^T \left(4a(s) c(s) - 2a^2(s) \right) ds, \\ 
\vspace{4pt}
\displaystyle c(t) = k \int_t^T e^{\int_t^s -4 a(r) dr} ds, \\
\displaystyle d(t) = \int_t^T \left(b(s) + 2c(s) \right) ds. 
\end{cases}
\end{equation*}
\end{lemma}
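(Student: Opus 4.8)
The plan is to exploit the \emph{triangular} (cascade) structure of the system \eqref{eq:ode1}: the equation for $a$ decouples from the rest, the equation for $c$ becomes linear once $a$ is known, and $b$ and $d$ are then obtained by direct quadrature. Hence one solves the four unknowns in the order $a \to c \to b \to d$, and at each stage the relevant ODE is well posed, so existence and uniqueness for the full system reduce to the well-posedness of each scalar equation in turn.

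First I would solve the scalar Riccati equation $a' - 2a^2 + k = 0$, $a(T) = 0$. This is exactly of the form treated in \eqref{eq:odes1} with $c = \sqrt 2$ and $d = \sqrt k$, so its solution is $a(t) = \sqrt{k/2}\,(1 - e^{2\sqrt{2k}(t-T)})/(1 + e^{2\sqrt{2k}(t-T)})$, which coincides with the stated formula after writing $t - T = -(T-t)$. One verifies directly by differentiation that this function solves the ODE and meets the terminal condition; moreover it is bounded with $0 \le a(t) \le \sqrt{k/2}$ on $[0,T]$. Since the vector field $a \mapsto 2a^2 - k$ is locally Lipschitz, Picard--Lindel\"of gives local uniqueness, and the a priori bound just exhibited rules out finite-time blow-up, so this explicit solution is the unique one on all of $[0,T]$.

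Next, with $a$ known and bounded, the third equation $c' - 4ac + k = 0$, $c(T) = 0$ is a linear first-order ODE. Solving it by the integrating factor $\exp\{-\int 4a\}$ (equivalently, applying the Leibniz rule to the claimed formula) yields $c(t) = k\int_t^T \exp\{-\int_t^s 4a(r)\,dr\}\,ds$; boundedness of $c$ follows from that of $a$ together with compactness of $[0,T]$, while existence and uniqueness are standard for linear equations (or follow from Theorem 12.1 of \cite{antsaklis2006}). The remaining two equations are pure quadratures: from $b' = 2a^2 - 4ac$ with $b(T) = 0$ one gets $b(t) = \int_t^T(4a(s)c(s) - 2a^2(s))\,ds$, and from $d' = -(b + 2c)$ with $d(T) = 0$ one gets $d(t) = \int_t^T(b(s) + 2c(s))\,ds$; both integrands are continuous on $[0,T]$, so $b$ and $d$ are well defined, bounded, and unique.

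Assembling these four steps establishes both existence and uniqueness of $(a,b,c,d)$ and produces exactly the explicit formulas in the statement. The only genuinely nonlinear, and hence the only potentially delicate, step is the Riccati equation for $a$, where in principle one must control blow-up; but because the closed form \eqref{eq:odes1} furnishes an explicit globally bounded solution, that difficulty is neutralized, and the remainder of the argument is routine linear-ODE theory and integration.
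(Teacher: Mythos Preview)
Your proposal is correct and follows essentially the same strategy as the paper: solve the decoupled Riccati equation for $a$ explicitly, then handle $(b,c,d)$ as a linear system once $a$ is known. The only cosmetic differences are that the paper derives $a$ via separation of variables (rather than quoting \eqref{eq:odes1}) and cites Theorem~12.1 of \cite{antsaklis2006} for the linear part instead of writing out the integrating-factor and quadrature formulas as you do.
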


\begin{proof}
    Firstly, with the given of $k > 0$, we can solve the ODE
    $$a'(t) - 2a^2(t) + k = 0, \quad a(T) = 0$$
    explicitly by the method of separating variables. Note that with the differential form, we have
    $$\frac{da}{\left(\sqrt{2}a - \sqrt{k} \right)\left(\sqrt{2}a + \sqrt{k} \right)} = \frac{1}{2 \sqrt{k}} \left(\frac{1}{\sqrt{2}a - \sqrt{k}} - \frac{1}{\sqrt{2}a + \sqrt{k}}  \right) da = dt.$$
    It follows that
    $$\ln \left( \left\vert \frac{\sqrt{2}a - \sqrt{k}}{\sqrt{2}a + \sqrt{k}} \right\vert \right) = 2 \sqrt{2k} t + C_1$$
    for some constant $C_1$ by taking integration on both sides. Thus by calculation, we obtain
    $$a(t) = \sqrt{\frac{k}{2}} \frac{1 - C_2e^{2 \sqrt{2k} t}}{1 + C_2e^{2 \sqrt{2k} t}}$$
    for some constant $C_2$ to be determined.
    Since $a(T) = 0$, it yields that $C_2 = e^{-2\sqrt{2k} T}$ and thus
    $$a(t) = \sqrt{\frac{k}{2}} \frac{1 - e^{-2 \sqrt{2k} (T-t)}}{1 + e^{-2 \sqrt{2k} (T-t)}}.$$
    It is easily to verify that $a(\cdot)$ is in $C^{\infty}([0, T])$ and is bounded. With the given of $a$, the functions $\left(b, c, d \right)$ in the Riccati system \eqref{eq:ode1}-\eqref{eq:terminal conditions} is a coupled linear system, and thus their existence, uniqueness, and boundedness are given by Theorem 12.1 in \cite{antsaklis2006}.
\end{proof}

\bibliographystyle{plain}
\bibliography{main}

\begin{thebibliography}{10}

\bibitem{antsaklis2006}
Panos~J Antsaklis and Anthony~N Michel.
\newblock {\em Linear systems}.
\newblock Springer Science \& Business Media, 2006.

\bibitem{Car10}
Pierre Cardaliaguet.
\newblock Notes on mean field games.
\newblock Technical report, Technical report, 2010.

\bibitem{CDLL19}
Pierre Cardaliaguet, Fran{\c{c}}ois Delarue, Jean-Michel Lasry, and
  Pierre-Louis Lions.
\newblock {\em The Master Equation and the Convergence Problem in Mean Field
  Games:(AMS-201)}, volume 201.
\newblock Princeton University Press, 2019.

\bibitem{CD18I}
Ren\'e Carmona and Fran\c{c}ois Delarue.
\newblock {\em Probabilistic theory of mean field games with applications.
  {I}}, volume~83 of {\em Probability Theory and Stochastic Modelling}.
\newblock Springer, Cham, 2018.
\newblock Mean field FBSDEs, control, and games.

\bibitem{CD18II}
Ren\'e Carmona and Fran\c{c}ois Delarue.
\newblock {\em Probabilistic theory of mean field games with applications.
  {II}}, volume~84 of {\em Probability Theory and Stochastic Modelling}.
\newblock Springer, Cham, 2018.
\newblock Mean field games with common noise and master equations.

\bibitem{CD15}
Ren{\'e} Carmona and Fran{\c{c}}ois Delarue.
\newblock Forward--backward stochastic differential equations and controlled
  mckean--vlasov dynamics.
\newblock {\em The Annals of Probability}, 43(5):2647--2700, 2015.

\bibitem{CDL13}
Ren{\'e} Carmona, Fran{\c{c}}ois Delarue, and Aim{\'e} Lachapelle.
\newblock Control of mckean--vlasov dynamics versus mean field games.
\newblock {\em Mathematics and Financial Economics}, 7(2):131--166, 2013.

\bibitem{DLR20}
Fran{\c{c}}ois Delarue, Daniel Lacker, and Kavita Ramanan.
\newblock From the master equation to mean field game limit theory: Large
  deviations and concentration of measure.
\newblock {\em Annals of Probability}, 48(1):211--263, 2020.

\bibitem{Dur05}
Richard Durrett.
\newblock {\em Probability}.
\newblock The Wadsworth \& Brooks/Cole Statistics/Probability Series. Wadsworth
  \& Brooks/Cole Advanced Books \& Software, Pacific Grove, CA, 3rd edition,
  2005.
\newblock Theory and examples.

\bibitem{FG15}
Nicolas Fournier and Arnaud Guillin.
\newblock On the rate of convergence in wasserstein distance of the empirical
  measure.
\newblock {\em Probability theory and related fields}, 162(3-4):707--738, 2015.

\bibitem{GMMZ22}
Wilfrid Gangbo, Alp{\'a}r~R M{\'e}sz{\'a}ros, Chenchen Mou, and Jianfeng Zhang.
\newblock Mean field games master equations with nonseparable hamiltonians and
  displacement monotonicity.
\newblock {\em The Annals of Probability}, 50(6):2178--2217, 2022.

\bibitem{HCM07_3}
Minyi Huang, Peter~E Caines, and Roland~P Malham{\'e}.
\newblock An invariance principle in large population stochastic dynamic games.
\newblock {\em Journal of Systems Science and Complexity}, 20(2):162--172,
  2007.

\bibitem{HCM07_1}
Minyi Huang, Peter~E Caines, and Roland~P Malham{\'e}.
\newblock Large-population cost-coupled lqg problems with nonuniform agents:
  individual-mass behavior and decentralized $\epsilon$-nash equilibria.
\newblock {\em IEEE transactions on automatic control}, 52(9):1560--1571, 2007.

\bibitem{HCM07_2}
Minyi Huang, Peter~E Caines, and Roland~P Malham{\'e}.
\newblock The nash certainty equivalence principle and mckean-vlasov systems:
  an invariance principle and entry adaptation.
\newblock In {\em 2007 46th IEEE Conference on Decision and Control}, pages
  121--126. IEEE, 2007.

\bibitem{HMC06}
Minyi Huang, Roland~P Malham{\'e}, Peter~E Caines, et~al.
\newblock Large population stochastic dynamic games: closed-loop mckean-vlasov
  systems and the nash certainty equivalence principle.
\newblock {\em Communications in Information \& Systems}, 6(3):221--252, 2006.

\bibitem{HY21}
Minyi Huang and Xuwei Yang.
\newblock Linear quadratic mean field games: Decentralized o (1/n)-nash
  equilibria.
\newblock {\em Journal of Systems Science and Complexity}, 34(5):2003--2035,
  2021.

\bibitem{JT23}
Joe Jackson and Ludovic Tangpi.
\newblock Quantitative convergence for displacement monotone mean field games
  with controlled volatility.
\newblock {\em arXiv preprint arXiv:2304.04543}, 2023.

\bibitem{JLSY22}
Jiamin Jian, Peiyao Lai, Qingshuo Song, and Jiaxuan Ye.
\newblock The convergence rate of the equilibrium measure for the lqg mean
  field game with a common noise.
\newblock {\em arXiv preprint arXiv:2106.04762v3}, 2022.

\bibitem{LJ07}
Jean-Michel Lasry and Pierre-Louis Lions.
\newblock Mean field games.
\newblock {\em Japanese journal of mathematics}, 2(1):229--260, 2007.

\bibitem{Pham09}
Huy{\^e}n Pham.
\newblock {\em Continuous-time stochastic control and optimization with
  financial applications}, volume~61.
\newblock Springer Science \& Business Media, 2009.

\end{thebibliography}

\end{document}